\pgfplotsset{compat=1.15}
\newcommand{\address}[1]{\gdef\@address{#1}}
\newcommand{\email}[1]{\gdef\@email{\url{#1}}}
\newcommand{\@endstuff}{\par\vspace{\baselineskip}\noindent\small
\begin{tabular}{@{}l}\scshape\@address\\\textit{E-mail address:} \@email\end{tabular}}
\title{A title}
\author{Jin Heo}
\address{Department of Mathematics, Korea University, Seoul 02841, South Korea}
\email{trueheo2000@korea.ac.kr}
\NewDocumentCommand{\dslash}{s}{%
  \IfBooleanTF{#1}
    {\big/\mkern-7mu\big/}
    {/\mkern-6mu/}%
}
\newtheorem{proposition}{Proposition}[section]
\newtheorem{theorem}[proposition]{Theorem}
\newtheorem{lemma}[proposition]{Lemma}
\newtheorem{corollary}[proposition]{Corollary}
\newtheorem{conjecture}[proposition]{Conjecture}
\theoremstyle{definition}
\newcommand{\ml}[1]{\mathbf{ml}\left(#1\right)}
\newcommand{\ssl}[1]{\mathbf{sl}\left(#1\right)}
\newcommand{\ub}[1]{\mathbf{ub}\left(#1\right)}
\newcommand{\lb}[1]{\mathbf{lb}\left(#1\right)}
\newcommand{\qh}{\mathds{Q}_h}
\newcommand{\te}{T^{\mathfrak{T}}}
\newcommand{\ov}[2]{\left[#1,#2\right]}
\newcommand{\ten}{T^{\mathfrak{T}}_{-}}
\newcommand{\tep}{T^{\mathfrak{T}}_{+}}
\title{Reptile trapezoids}
\author{Jin Heo}
\date{April 2024}
\begin{document}

\maketitle
\begin{abstract}
	A geometric figure is a reptile if it can be dissected into at least two similar copies congruent to each other. We prove that if a trapezoid is a reptile and not a parallelogram, then the length of each base is a linear combination of the lengths of its legs with rational coefficients. We then rule out isosceles trapezoids and right trapezoids which are not reptile. In particular, we prove that, up to similarity, there are at most six reptile right trapezoids, not a parallelogram, whose acute internal angle is a rational multiple of $\pi$. Finally, we present a rep-25 right trapezoid that is not a parallelogram and is not similar to any of the known reptile trapezoids.
\end{abstract}
\section{Introduction}
A geometric figure is said to be rep-$n$ if it can be dissected into $n$ similar copies congruent to each other. When a figure is rep-$n$ for some $n\geq 2$, it is called a reptile. In \cite{langford19401464}, C. D. Langford proposed the problem of characterizing reptile polygons while giving some examples of them, three of which are reptile trapezoids R1, R2, and R3 illustrated in Figure \ref{fig:Int1}. Later on, S. W. Golomb presented further examples of reptile polygons in \cite{golomb1964replicating}, including the `sphinx,' so far the only known non-convex reptile polygon with an odd number of sides according to \cite[Kapitel 5]{osburg2004selbstahnliche}. Furthermore, in \cite[Section 3.2]{osburg2004selbstahnliche}, I. Osburg provided a method for constructing a non-convex reptile $2n$-gon with $n\geq 3$, showing that there are infinitely many non-convex reptile polygons.

On the other hand, every convex reptile polygon is known to be either a triangle or a quadrilateral according to the results of U. Betke in \cite{betke1976zerlegungen} (as cited in \cite{laczkovich2023quadrilateral}) and of Osburg in \cite[Satz 2.23]{osburg2004selbstahnliche}. Osburg also provided a narrower characterization for reptile quadrilaterals in \cite[Satz 2.9 and Folgerung 3.2]{osburg2004selbstahnliche}: a quadrilateral is a reptile only if it is convex and is either a trapezoid or a cyclic quadrilateral. In addition, M. Laczkovich made a further breakthrough in \cite[Corollary 1.3]{laczkovich2023quadrilateral}, proving that only trapezoids can be a reptile quadrilateral. Consequently, the only problem for classifying convex reptile polygons is finding all reptile trapezoids. I could not find the exact reference, but except for parallelograms, there seem to be only four known examples of reptile trapezoids so far, as illustrated in Figure \ref{fig:Int1}, where the last one, R4, was mentioned by L. Sallows in \cite{sallows2014more}.

\begin{figure}[ht]
\captionsetup[subfigure]{labelformat=empty}
      \centering
	   \begin{subfigure}{0.25\linewidth}
		\includegraphics[width=\linewidth]{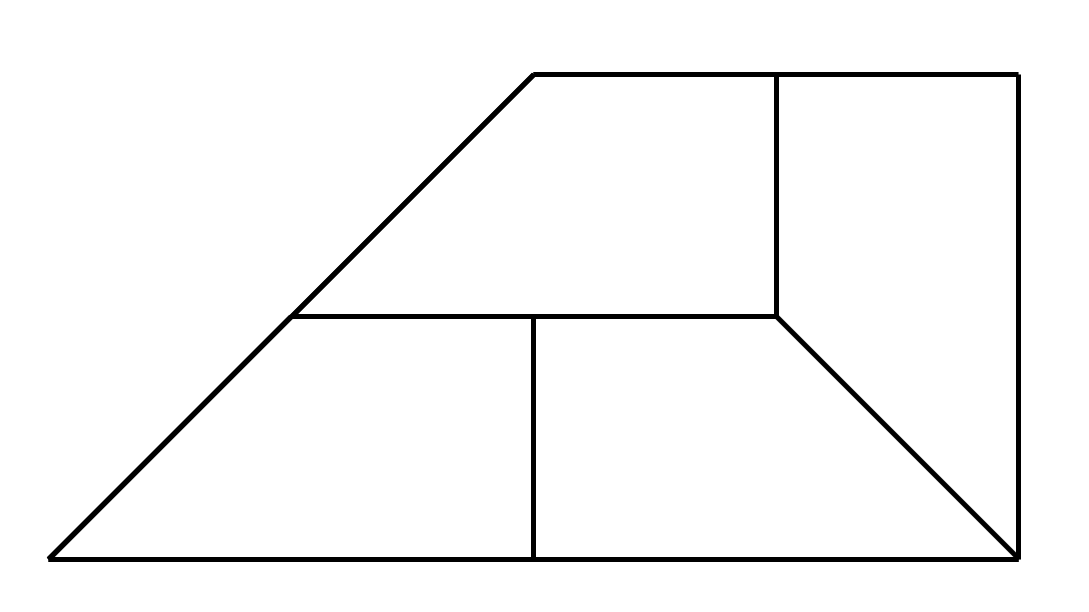}
		\caption{R1}
		\label{fig:Int1a}
	   \end{subfigure}
	   \begin{subfigure}{0.2\linewidth}
		\includegraphics[width=\linewidth]{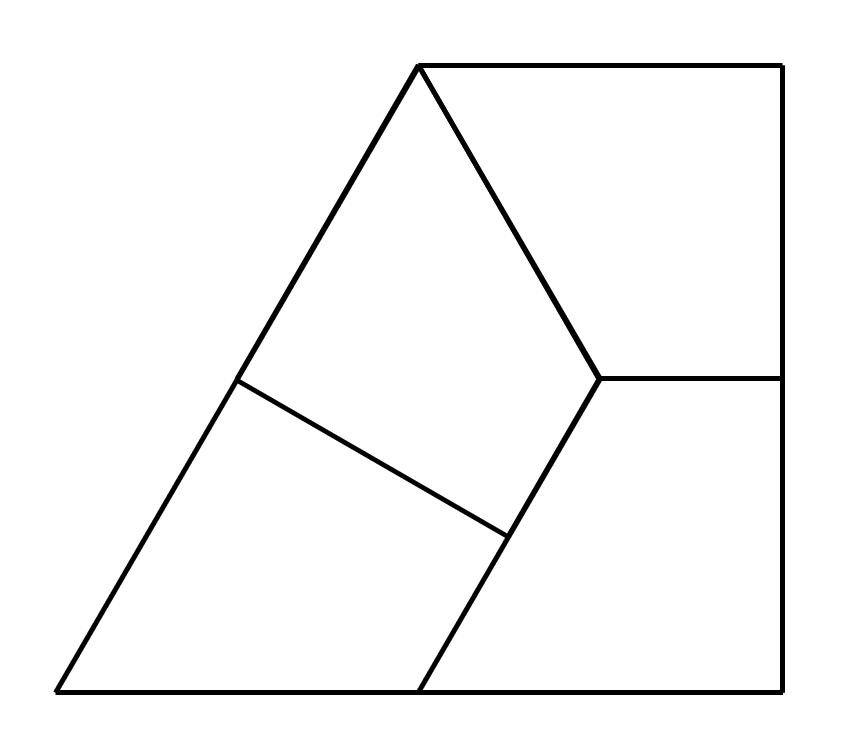}
		\caption{R2}
		\label{fig:Int1b}
	    \end{subfigure}
	     \begin{subfigure}{0.25\linewidth}
		 \includegraphics[width=\linewidth]{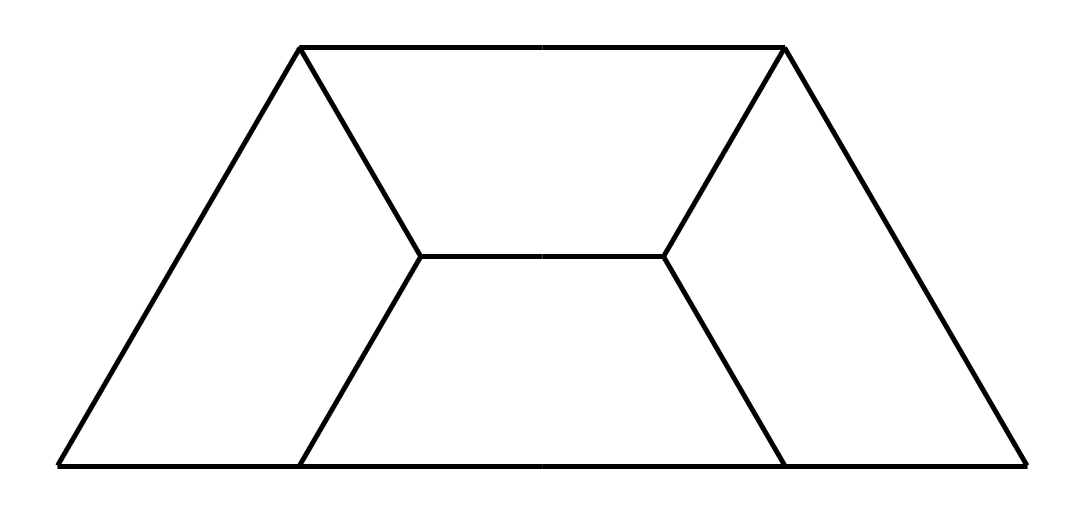}
		 \caption{R3}
		 \label{fig:Int1c}
	      \end{subfigure}
	       \begin{subfigure}{0.25\linewidth}
		  \includegraphics[width=\linewidth]{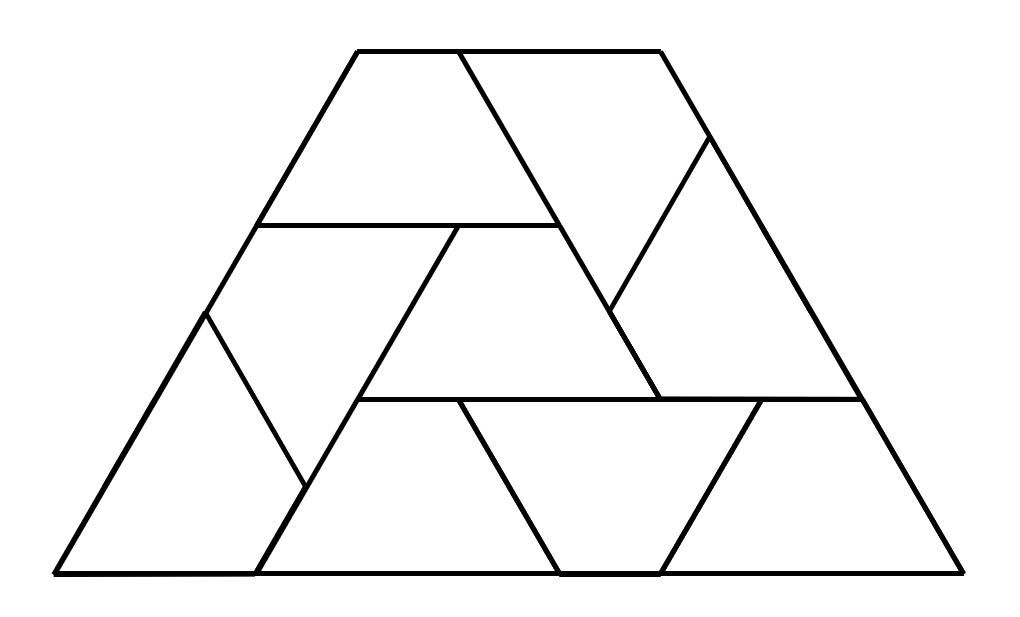}
		  \caption{R4}
		  \label{fig:Int1d}
	       \end{subfigure}
	\caption{The only known examples of reptile trapezoids, not a parallelogram}
	\label{fig:Int1}
\end{figure}

In this paper, we rule out some trapezoids that are not a reptile by providing stronger necessary conditions for being a reptile (see Theorem \ref{GT} and \ref{GT2}). In particular, we investigate isosceles trapezoids and right trapezoids whose acute internal angle is a rational multiple of $\pi$ and narrow down the candidates of such trapezoids that are reptile (see Corollary \ref{Cii}, \ref{Rii}, and Theorem \ref{RR}). Moreover, we present a rep-25 trapezoid, not a parallelogram, to which none of the four trapezoids in Figure \ref{fig:Int1} is similar and observe that it cannot be rep-$n$ for any $1<n<25$ (see Figure \ref{fig:newrep} and Theorem \ref{NEW}).

The structure of this paper is as follows. In Section \ref{TEM}, we introduce some terminologies and notations that will be frequently used throughout the discussion. Then, in Section \ref{MRS}, we describe the main results of this paper in advance, followed by a comment for further research and a proposal for some related conjectures. In Section \ref{Props}, we discuss the properties and lemmas essential to the proof of the main results. Using them, we prove Theorem \ref{GT} in Section \ref{GTP}. We then utilize the results of Theorem \ref{GT} and the lemmas in Section \ref{Props} to prove Theorem \ref{GT2} in Section \ref{GT2P}. Furthermore, in Section \ref{MI}, we prove Corollary \ref{Cii} using the result of Theorem \ref{GT} and prove Corollary \ref{Rii} similarly in Section \ref{RP1}. Finally, we prove Theorem \ref{RR} throughout Section \ref{RP2}, \ref{RP3} and \ref{RP4}, and prove Theorem \ref{NEW} in Section \ref{RP5}.

\section{Terminologies and notations}\label{TEM}

\subsection{Line segments and polygons}
Given a line segment $L$, we define $|L|$ as the length of $L$. For readability, given two points $v$ and $w$ in $\mathds{R}^2$, we denote by $\ov{v}{w}$ the closed line segment with two endpoints $v$ and $w$. Furthermore, given $n$ points $v_1, v_2, \cdots, v_n$ in $\mathds{R}^2$, we put
$$
\left[v_1, v_2, \cdots, v_n\right]=\cup_{i=1}^{n-1}\ov{v_i}{v_{i+1}}
$$
and call it a \textbf{polygonal chain}. In addition, we define the polygon $v_1v_2\cdots v_n$ as the $n$-gon whose boundary is a polygonal chain $\left[v_1, v_2, \cdots, v_n,v_1\right]$. Given a polygon, we refer to the union of its boundary and the set of all points enclosed by the boundary as the \textbf{region} of the polygon.

 We often use the term `interior' differently for line segments with positive length and polygons with positive area. For the former usage, the interior of a given line segment means its subset containing all but two endpoints of the line segment. On the contrary, for the latter usage, the interior of a given polygon refers to the interior of a subset of Euclidean space $\mathds{R}^2$.

\subsection{Trapezoids}
Let $T$ be a trapezoid that is not a parallelogram. If the length of its longest leg equals $1$, then we call it a \textbf{unit trapezoid}. Since $T$ is not a parallelogram, its two bases have different lengths; we refer to the longer base of $T$ as the \textbf{lower base} of $T$ and the shorter base of $T$ as the \textbf{upper base} of $T$ and denote $\lb{T}$ and $\ub{T}$, respectively. If $T$ is not an isosceles trapezoid, then its two legs also have different lengths. In such a case, We call the longer leg of $T$ the \textbf{main leg} of $T$ and the shorter leg of $T$ the \textbf{subsidiary leg} of $T$ and denote $\ml{T}$ and $\ssl{T}$, respectively. If $T$ is isosceles, we consider each of its legs as the main leg and, at the same time, the subsidiary leg of $T$, and we will not distinguish these two legs unless otherwise noticed. We denote by $\theta(T)$ the internal angle of $T$ between $\lb{T}$ and $\ml{T}$ and call it the \textbf{main acute angle} of $T$. If $T$ is not a right trapezoid, then there is an acute internal angle of $T$ between $\ssl{T}$ and some base of $T$; the corresponding base is $\ub{T}$ if $T$ is obtuse, and is $\lb{T}$ otherwise. We call that acute angle the \textbf{subsidiary acute angle} of $T$ and write $\psi(T)$. As before, if $T$ is isosceles, then each of its two acute internal angles is called the main acute angle and, simultaneously, the subsidiary acute angle of $T$, and they will not be distinguished unless otherwise noticed. If $T$ is an isosceles trapezoid (resp. right trapezoid) with $\theta(T)=\vartheta$, then we will call it a $\vartheta$-\textbf{isosceles trapezoid} (resp. $\vartheta$-\textbf{right trapezoid}).

In some parts of the discussion, trapezoids and other polygons will be considered subsets of real plane $\mathds{R}^2$ with the usual $x$-axis and $y$-axis. Given a right trapezoid $T_0$ on $\mathds{R}^2$ whose subsidiary leg is parallel to the $y$-axis, we say $T_0$ is \textbf{positive} if points on $\ub{T_0}$ have higher $y$-coordinate value than those on $\lb{T_0}$, and say \textbf{negative} otherwise.

Finally, given a positive number $\mu$, we denote by $\mu T$ a copy similar to $T$ obtained by multiplying the length of each edge of $T$ by $\mu$.

\subsection{Tiles and tilings}
Suppose a polygon $P$ is tiled with congruent copies of $T$. We refer to the collection consisting of all these copies as a \textbf{tiling} $\mathfrak{T}$ of $P$ and each copy in the tiling as a \textbf{tile}. When we mention some tiles in the tiling $\mathfrak{T}$ unspecifically, we denote them by $\te$ identically unless otherwise specified or indexed. Provided that $T$ is a right trapezoid, we denote each unspecified positive (resp. negative) tile by $\tep$ (resp. $\ten$).

If a given angle is dissected into some internal angles of tiles, then we say these internal angles \textbf{fill} the given angle. Given two closed line segments $A$ and $B$, we say that $A$ \textbf{lies on} $B$ if $A\subset B$. Put $n_0=0$, and let $n_1, n_2, \cdots$ be a strictly increasing sequence of positive integers; let $T_1, T_2,\cdots, T_{n_k}\in\mathfrak{T}$ $(k\geq 1)$ be tiles and $E_1, E_2, \cdots, E_{n_k}$ be closed line segments lying on the same straight line, where each $E_i$ is an edge of $T_i$. Suppose that $L_i=\cup_{j=n_{(i-1)}+1}^{n_i}E_j$ is a closed line segment for each $i=1, 2, \cdots, k$. Given a closed line segment $L$, we say that edges $L_1, \cdots, L_k$ \textbf{lie on} $L$ \textbf{in a row} if the following three conditions hold.
\begin{enumerate}
    \item $L_i$ lies on $L$ for each $i=1, 2, \cdots, k$;
    \item $E_i\cap E_j$ is either an empty set or a one-point set for each $1\leq i<j\leq n_k$;
    \item $T_1, T_2,\cdots, T_{n_k}$ are contained in the same closed half-plane whose boundary is the straight line extended from $L$.
\end{enumerate}
In particular, if $L_{i}\cap L_{i+1}$ is a one-point set for each $i=1, 2, \cdots, k-1$, then we say these edges \textbf{lie on} $L$ \textbf{in a row in the order} $\left(L_1, L_2, \cdots, L_k\right)$. Furthermore, we say they \textbf{perfectly cover} (resp. \textbf{properly cover}) $L$ if $L=\cup_{i=1}^{k}L_i$ (resp. $L\varsubsetneq\cup_{i=1}^{k}L_i$). During the proof, we may use these terms without mentioning the underlying tiles $T_1, T_2,\cdots, T_{n_k}$ or edges $E_1, E_2,\cdots, E_{n_k}$ unless some $E_i$ is an edge of more than one tile. Finally, if a polygon is a union of some tiles in $\mathfrak{T}$, then we call such a polygon \textbf{cluster polygon} or shortly a \textbf{cluster in} $\mathfrak{T}$.

\subsection{Vector space $\mathds{Q}_x$}
Given a positive real number $x$, we denote by $\mathds{Q}_x$ the vector subspace of $\mathds{R}$ (as a vector space over $\mathds{Q}$) spanned by $1$ and $x$. If $x$ is irrational, then for each $y\in\mathds{Q}_x$, there is a unique pair of rational numbers $\left(c_{1,y},c_{x,y}\right)$ such that $y=c_{1,y}+c_{x,y}x$; we will use this notation in further discussion.

\section{Main results}\label{MRS}
Before getting to the point, it is worth noting that every parallelogram is trivially a reptile. Moreover, any two similar polygons are reptiles if one is a reptile. For this reason, we will only deal with unit trapezoids throughout the discussion. The following result provides a general necessary condition for trapezoids to be reptiles.

\begin{theorem}\label{GT}
    If a unit trapezoid $T$ is a reptile, then $\left|\ub{T}\right|$ and $\left|\lb{T}\right|$ are contained in $\mathds{Q}_{\left|\ssl{T}\right|}$.
\end{theorem}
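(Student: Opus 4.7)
The plan is to extract a linear-algebraic constraint from the rep-$n$ dissection and then analyze it as an eigenvalue problem. Let $b=|\lb{T}|$, $u=|\ub{T}|$, $s=|\ssl{T}|$, and recall $|\ml{T}|=1$. Since $T$ is rep-$n$, it is dissected into $n$ congruent tiles each similar to $T$ with ratio $1/\sqrt{n}$, whose edges have lengths in $\{b/\sqrt{n},u/\sqrt{n},1/\sqrt{n},s/\sqrt{n}\}$. For each edge $E$ of $T$, the tile edges lying on $E$ perfectly cover $E$, giving
\[
\sqrt{n}\,|E|=k_1^{(E)}b+k_2^{(E)}u+k_3^{(E)}+k_4^{(E)}s,\qquad k_i^{(E)}\in\mathds{Z}_{\ge0}.
\]
Running $E$ through the four edges $\lb{T},\ub{T},\ml{T},\ssl{T}$ packages these into a matrix identity $M\mathbf{v}=\sqrt{n}\,\mathbf{v}$ with $\mathbf{v}=(b,u,1,s)^T$ and $M\in\mathds{Z}_{\ge0}^{4\times4}$.

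Next I would deduce $b,u,s\in\mathds{Q}(\sqrt{n})=\mathds{Q}+\mathds{Q}\sqrt{n}$. Iterating gives $(M^2-nI)\mathbf{v}=0$, so $\mathbf{v}$ lies in the real extension of the rational kernel $V_{\mathds{Q}}\subseteq\mathds{Q}^4$ of the integer matrix $M^2-nI$. Writing $\mathbf{v}$ as a real combination of a $\mathds{Q}$-basis of $V_{\mathds{Q}}$ and combining with $M\mathbf{v}=\sqrt{n}\mathbf{v}$ and the normalization $v_3=1$ forces the coefficients into $\mathds{Q}(\sqrt{n})$: the induced action of $M$ on $V_{\mathds{Q}}$ is a rational operator whose $\sqrt{n}$-eigenspace admits a $\mathds{Q}(\sqrt{n})$-basis, and the third-coordinate condition pins down the scale in $\mathds{Q}(\sqrt{n})$. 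Write $b=b_0+b_1\sqrt{n}$, $u=u_0+u_1\sqrt{n}$, $s=s_0+s_1\sqrt{n}$ with rational components.

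The conclusion splits into three cases. If $\sqrt{n}\in\mathds{Q}$ (that is, $n$ is a perfect square), all three values are rational and $b,u\in\mathds{Q}\subseteq\mathds{Q}_s$. If $\sqrt{n}\notin\mathds{Q}$ and $s$ is irrational so that $s_1\neq 0$, then $\mathds{Q}_s=\mathds{Q}+\mathds{Q} s=\mathds{Q}+\mathds{Q}(s_0+s_1\sqrt{n})=\mathds{Q}(\sqrt{n})$, which already contains $b$ and $u$. The main obstacle is the residual case $\sqrt{n}\notin\mathds{Q}$ with $s\in\mathds{Q}$: extracting the $\sqrt{n}$-part of the $\ml{T}$-equation gives $m_1 b_1+m_2 u_1=1$, so at least one of $b_1,u_1$ is nonzero, which would contradict the desired $b,u\in\mathds{Q}_s=\mathds{Q}$. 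I expect this subcase to be excluded by cross-combining the $\lb{T}$- and $\ub{T}$-equations with the structural tiling lemmas of Section \ref{Props}, forcing $\sqrt{n}$ to be rational whenever $s$ is, and thereby closing the proof.
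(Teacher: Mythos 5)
Your proposal has a genuine gap at its central step, and the intermediate claim that step is meant to establish is in fact false. From the four boundary-covering identities packaged as $M\mathbf{v}=\sqrt{n}\,\mathbf{v}$ you cannot conclude $b,u,s\in\mathds{Q}(\sqrt{n})$: that deduction would be valid only if the $\sqrt{n}$-eigenspace of the integer matrix $M$ were one-dimensional, and nothing forces this. Worse, the claim itself fails for actual reptile trapezoids. The trapezoid $R\left(\frac{\pi}{4},\frac{1}{\sqrt{2}}\right)$ from Figure \ref{fig:Int1} has edge lengths $1,\ \sqrt{2},\ 1/\sqrt{2},\ 1/\sqrt{2}$; it is rep-$k$ for some $k\geq 2$ and hence rep-$k^2$, and taking $n=k^2$ gives $\mathds{Q}(\sqrt{n})=\mathds{Q}$ while $s=1/\sqrt{2}\notin\mathds{Q}$. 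Concretely, here $\mathbf{v}=(0,0,1,0)+\tfrac{1}{\sqrt{2}}(2,1,0,1)$ splits into two rational eigenvectors for the same eigenvalue, so the eigenspace is at least two-dimensional and the normalization $v_3=1$ pins down nothing. The correct target --- and what Theorem \ref{GT} actually asserts --- concerns the $\mathds{Q}$-linear relations among the edge lengths themselves, namely $a,b\in\mathds{Q}_h=\mathds{Q}+\mathds{Q}h$; this is not a statement about $\mathds{Q}(\sqrt{n})$, and the two do not coincide.

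The paper's proof shows why boundary equations alone cannot suffice. It does use your kind of covering identity (Lemma \ref{GTL}), and that already disposes of the easy half, the case $a\notin\qh$ with $b\in\qh$, by essentially your coefficient argument (the coefficient of $a$ would have to equal $\mu$, contradicting $p<\mu$). But the substantive case $b\notin\qh$ is handled by first deducing from the same integrality considerations that lower bases of tiles can never participate in perfectly covering a segment whose length is an integer or an integer multiple of $h$ (so the tiling is main-to-main and sub-to-sub in the sense of Section \ref{Props}), and then invoking the long geometric cluster-construction machinery (Lemmas \ref{L1} through \ref{L2} and Corollaries \ref{ENDM} and \ref{ENDSSS}) to show that no such tiling of $\mu T$ can exist. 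Your ``residual case,'' which you defer to ``cross-combining with the structural tiling lemmas,'' is precisely where all of that work lives; as written the proposal does not close it, and the route through $\mathds{Q}(\sqrt{n})$ cannot be repaired, because its first conclusion is already false.
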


\noindent
This result shows that if a trapezoid (not necessarily a unit trapezoid) is a reptile and not a parallelogram, then the length of each base is a linear combination of the lengths of its legs with rational coefficients. In particular, if a given unit trapezoid is wide enough and its subsidiary leg is of irrational length, then we can further narrow the previous necessary condition as follows.

\begin{theorem}\label{GT2}
    Suppose that $T$ is a unit trapezoid, not a $\pi/3$-right trapezoid, with $\left|\ssl{T}\right|=h\notin\mathds{Q}$, $\left|\ub{T}\right|=a$, and $\left|\lb{T}\right|=b>1$. If $T$ is a reptile, then one of the following holds.
    \begin{enumerate}
        \item $c_{1,a}>0$, $c_{h,a}<0$, $c_{1,b}\leq 0$;
        \item $c_{1,a}<0$, $c_{h,a}>0$, $c_{h,b}\leq 0$;
        \item $T$ is obtuse and $c_{h,a}=c_{h,b}=0$.
    \end{enumerate}
\end{theorem}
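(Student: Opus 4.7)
The plan is to combine the four edge–length equations from the reptile tiling of $T$ with the decomposition $a=c_{1,a}+c_{h,a}h$, $b=c_{1,b}+c_{h,b}h$ given by Theorem~\ref{GT}. Since all tiles are congruent similar copies of $T$ scaled by some $\mu>0$, along each edge of $T$ of length $\xi\in\{1,h,a,b\}$ the tile edges lying on it in a row perfectly cover it, giving
\[
\xi = \mu\bigl(\alpha_\xi + \beta_\xi h + \gamma_\xi a + \delta_\xi b\bigr)
\]
with non-negative integers $\alpha_\xi,\beta_\xi,\gamma_\xi,\delta_\xi$. Substituting for $a$ and $b$ and collecting in the basis $\{1,h\}$ of $\qh$, each of the four equations becomes $\xi = \mu(A_\xi + B_\xi h)$ with $A_\xi, B_\xi \in \mathds{Q}$ expressed as non-negative integer combinations of $1, c_{1,a}, c_{h,a}, c_{1,b}, c_{h,b}$.

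Next I would eliminate $\mu$ between the main–leg and subsidiary–leg equations to obtain the identity $h(A_1 + B_1 h) = A_h + B_h h$, a polynomial relation for $h$ over $\mathds{Q}$. Since $h$ is irrational, this splits two ways: either $h$ is quadratic over $\mathds{Q}$, in which case $\qh=\mathds{Q}(h)$ is a field and $\mu\in\qh$; or $B_1 = A_h = 0$ and $A_1 = B_h$, in which case $\mu = 1/A_1 \in \mathds{Q}$ and we immediately read off the constraint $\beta_1 + \gamma_1 c_{h,a} + \delta_1 c_{h,b} = 0$ with non-negative weights $\beta_1,\gamma_1,\delta_1$. In both branches I would then match coefficients of $1$ and $h$ in the two base equations to produce scalar identities of the form $c_{1,a}=\mu A'_a$, $c_{h,a}=\mu B'_a$ (and analogously for $b$), and iteratively sign–chase—repeatedly substituting for $a,b$ on the right and exploiting $0<\mu<1$—to force each of $c_{1,a},c_{h,a}$ and each of $c_{1,b},c_{h,b}$ into one of a small list of sign patterns. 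The coupling identity coming from the main–leg equation then pins down the precise pairings recorded in Cases~1 and 2.

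The two hypotheses should enter as follows. The condition $b>1$ ensures $\mu b < b$ for every tiling, so the lower–base equation is genuinely nontrivial; otherwise, a single tile edge could cover $\lb{T}$ and the sign–chasing collapses. The exclusion of $\pi/3$–right trapezoids removes the coincidence $\theta(T)=3\psi(T)$, which would otherwise permit alternate fillings of the acute corner at the foot of $\ml{T}$ by three copies of $\psi(T)$ and thereby introduce additional nonzero tile counts capable of spoiling the sign conclusions derived from non-negativity of $\alpha_\xi,\beta_\xi,\gamma_\xi,\delta_\xi$.

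I expect the main obstacle to be isolating Case~3, where $c_{h,a}=c_{h,b}=0$ forces $a,b\in\mathds{Q}$ and the purely algebraic sign–chasing says nothing. There one needs a genuinely geometric input: a corner–angle analysis at the acute vertex between $\ssl{T}$ and a base of $T$. That vertex lies on $\lb{T}$ precisely when $T$ is not obtuse; if $T$ is not obtuse and $c_{h,a}=c_{h,b}=0$, then the tile filling that corner (with angle $\psi(T)$ at the vertex) contributes a tile edge of length $\mu h$ along $\lb{T}$ that cannot be cancelled by any other contribution with only rational base lengths available, violating $c_{h,b}=0$. Hence Case~3 forces the acute subsidiary vertex onto $\ub{T}$, i.e.\ $T$ is obtuse. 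Making this last step precise—and ruling out the quadratic branch from contributing further exotic configurations—is where the bulk of the work will sit.
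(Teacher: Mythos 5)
There is a genuine gap: your plan works only with the four covering equations along the boundary edges of the dissected trapezoid, and the theorem cannot be extracted from those alone. The paper's proof rests on the machinery of Section \ref{Props}. Since $b>1$, a lower base of a tile is longer than either leg of a tile, so the tiling is automatically main-to-main and sub-to-sub; one then checks that the sign patterns excluded by the theorem (e.g.\ $c_{h,a}>0$ together with $c_{h,b}>0$, or $c_{1,a}>0$ together with $c_{1,b}>0$) force $T$ to satisfy the MTM or STS property, because any perfect covering $\rho=pa+qb+r+sh$ (resp.\ $\rho h=pa+qb+r+sh$) by non-negative integer multiples of edge lengths then forces $q=0$; finally Corollaries \ref{ENDM} and \ref{ENDS} convert MTM/STS into the non-existence of the tiling of $\mu T\cong\hat{M}\left(T,\mu,\mu a\right)\cong\hat{S}\left(T,\mu,\mu a\right)$. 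The decisive coverings here are of segments of length $\rho$ and $\rho h$ for \emph{arbitrary} positive integers $\rho$ --- main legs and subsidiary legs of interior tiles and of intermediate cluster polygons --- not the four boundary edges of the big trapezoid; and even once the sign conditions are translated into MTM/STS, the contradiction requires the cluster-parallelogram constructions of Lemmas \ref{L1.1} through \ref{NOS2}, a genuinely geometric step with no counterpart in your outline. Your boundary system $\xi=\mu\left(\alpha_\xi+\beta_\xi h+\gamma_\xi a+\delta_\xi b\right)$ is consistent with many sign patterns outside the theorem's list, so no amount of sign-chasing on it can close the argument. In addition, your ``quadratic branch'' is left unresolved (the paper avoids it entirely by taking $\mu$ to be an integer at least $3$, using that rep-$n$ implies rep-$n^2$), and your treatment of Case 3 is a sketch that does not match any mechanism in the paper: the non-obtuse case with $c_{h,a}=c_{h,b}=0$ is excluded there because rational positive $a,b$ give $c_{1,a}>0$ and $c_{1,b}>0$, hence STS, hence a contradiction with Corollary \ref{ENDS}, while the obtuse case survives only because the extra integrality conditions of that corollary can fail.

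Two of your side remarks also misidentify how the hypotheses enter. The condition $b>1$ is not about $\mu b<b$; it is exactly what makes the tiling main-to-main and sub-to-sub, since no $\lb{\te}$ can fit inside a leg of another tile. And the exclusion of $\pi/3$-right trapezoids is not about $\theta(T)=3\psi(T)$ (for a right trapezoid $\psi(T)=\pi/2$, so that identity is vacuous); it is that when $\theta(T)=\pi/3$ the angle $2\theta(T)$ between two adjacent main legs equals the obtuse internal angle $\pi-\theta(T)$ of $T$, so Lemma \ref{L1.1.1.2} and Corollary \ref{ENDS} only rule out \emph{strictly} main-to-main tilings in that case, which is more than the hypothesis $b>1$ delivers.
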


\noindent
Theorem \ref{GT} immediately implies the following result.

\begin{corollary}
    Suppose that a unit trapezoid $T$ is a reptile and $\left|\ssl{T}\right|\in\mathds{Q}$. Then, $\left|\ub{T}\right|$ and $\left|\lb{T}\right|$ are both rational, and $T$ is not rep-$n$ for all square-free integers $n$.
\end{corollary}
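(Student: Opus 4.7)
The plan is to peel off the two claims separately. The first is immediate from Theorem \ref{GT}: since $\left|\ssl{T}\right|\in\mathds{Q}$, the space $\mathds{Q}_{\left|\ssl{T}\right|}$ collapses to $\mathds{Q}$, so $\left|\ub{T}\right|$ and $\left|\lb{T}\right|$ are rational.

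For the second claim I would argue by contradiction. Suppose $T$ admits a rep-$n$ tiling $\mathfrak{T}$ with $n\geq 2$ square-free, and write $a=\left|\ub{T}\right|$, $b=\left|\lb{T}\right|$, $h=\left|\ssl{T}\right|$; all three are rational by the first claim. Comparing areas forces every tile in $\mathfrak{T}$ to be congruent to $\mu T$ with $\mu=1/\sqrt{n}$, so each tile edge has length in the set $\mu\{1,h,a,b\}$.

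The main step is to restrict attention to $\lb{T}$. Each tile is a convex trapezoid sitting inside $T$, and the line $L$ carrying $\lb{T}$ supports $T$ from the outside; hence any tile meeting $L$ in a segment of positive length does so along exactly one of its own edges, and that edge lies entirely inside $\lb{T}$. A standard covering argument then shows these edges perfectly cover $\lb{T}$, which yields
\[
b=\mu\bigl(r+sh+pa+qb\bigr)
\]
for non-negative integers $p,q,r,s$ not all zero. Multiplying through by $\sqrt{n}$ gives $\sqrt{n}\,b\in\mathds{Q}$, and since $b>0$ is rational this forces $\sqrt{n}\in\mathds{Q}$; hence $n$ must be a perfect square, contradicting the square-freeness of $n\geq 2$. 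I do not expect any genuine obstacle here: the only point requiring care is the ``perfectly cover'' claim along $\lb{T}$, which follows from convexity of each tile together with the fact that $\lb{T}$ sits on a supporting line of $T$, and everything else is bookkeeping with the four admissible edge lengths.
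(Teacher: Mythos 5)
Your proof is correct and follows exactly the route the paper intends: the first claim is immediate from Theorem \ref{GT} since $\mathds{Q}_{\left|\ssl{T}\right|}=\mathds{Q}$ when $\left|\ssl{T}\right|$ is rational, and the second claim is the standard edge-covering argument showing that $\sqrt{n}\,b=pa+qb+r+sh\in\mathds{Q}$ forces $n$ to be a perfect square, contradicting square-freeness for $n\geq 2$. The paper states the corollary without proof as an immediate consequence of Theorem \ref{GT}, and your write-up is a faithful (and appropriately careful) expansion of that one-line justification.
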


Let $I\left(\vartheta, a\right)$ (resp. $R\left(\vartheta, a\right)$) be $\vartheta$-isosceles trapezoid (resp. $\vartheta$-right trapezoid) such that $\left|\ub{T}\right|=a$. Applying Theorem \ref{GT} to isosceles trapezoids, we derive the following necessary condition for $I\left(\theta, a\right)$ being a reptile.

\begin{corollary}\label{Cii}
    If $I\left(\vartheta, a\right)$ is a reptile and $\vartheta/\pi\in\mathds{Q}$, then $\vartheta=\pi/3$.
\end{corollary}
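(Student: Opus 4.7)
The plan is to combine Theorem \ref{GT} with Niven's theorem from elementary number theory. Since we are working with a unit trapezoid and $I(\vartheta, a)$ is isosceles, both of its legs have length $1$; as the paper declares, each leg plays the role of the main leg and the subsidiary leg simultaneously. In particular $\left|\ssl{I(\vartheta,a)}\right|=1$, and because $\mathds{Q}_1$ is by definition the $\mathds{Q}$-span of $\{1,1\}$, it collapses to $\mathds{Q}$ itself. Theorem \ref{GT} therefore upgrades to the statement that both $a=\left|\ub{I(\vartheta,a)}\right|$ and $b=\left|\lb{I(\vartheta,a)}\right|$ are rational.

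Next I would perform the standard trigonometric computation for an isosceles trapezoid: dropping perpendiculars from the two endpoints of the upper base to the lower base splits the lower base into three pieces of lengths $\cos\vartheta$, $a$, and $\cos\vartheta$, so
\[
b \;=\; a + 2\cos\vartheta,
\]
and hence $2\cos\vartheta = b - a \in \mathds{Q}$; in particular $\cos\vartheta$ is rational.

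Finally, I would invoke Niven's theorem, which asserts that the only rational values of $\cos\vartheta$ achieved at a rational multiple $\vartheta/\pi$ are $0$, $\pm 1/2$, and $\pm 1$. Since $I(\vartheta,a)$ is not a parallelogram, $\vartheta$ is a strict acute angle, so $\cos\vartheta\in(0,1)$, forcing $\cos\vartheta=1/2$ and therefore $\vartheta=\pi/3$. There is no real obstacle once Theorem \ref{GT} is in hand; the only conceptual point worth underlining is the degeneration $\mathds{Q}_{\left|\ssl{T}\right|}=\mathds{Q}$ for isosceles unit trapezoids, which is precisely what promotes Theorem \ref{GT}'s two-parameter linear conclusion to an outright rationality statement fit for Niven's theorem.
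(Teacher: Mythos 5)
Your proposal is correct and follows essentially the same route as the paper: Theorem \ref{GT} with $\left|\ssl{I(\vartheta,a)}\right|=1$ forces $a$ and $b=a+2\cos\vartheta$ to be rational, hence $\cos\vartheta\in\mathds{Q}$, and a classification of rational cosines at rational multiples of $\pi$ pins down $\vartheta=\pi/3$. The only cosmetic difference is that you invoke Niven's theorem directly while the paper cites Tangsupphathawat's Theorem 3.3, which for the rational-cosine case is the same statement.
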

\noindent
For instance, $I\left(\frac{\pi}{3},1\right)$ and $I\left(\frac{\pi}{3},\frac{3}{2}\right)$ are reptiles, as shown in Figure \ref{fig:Int1}. We also obtain the following necessary conditions for right trapezoids $R\left(\vartheta,a\right)$ being a reptile.
\begin{corollary}\label{Rii}
    If $R\left(\vartheta,a\right)$ is a reptile, then $\cos\vartheta$ and $\sin\vartheta$ are both algebraic integers of degree at most $2$.
\end{corollary}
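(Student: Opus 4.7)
The plan is to combine Theorem \ref{GT} with the Pythagorean identity. Since $R(\vartheta, a)$ is a unit right trapezoid, its main leg has length $1$; placing the right angle at the origin then forces $|\ssl{T}| = \sin\vartheta$ (the subsidiary leg is perpendicular to the bases) and $|\lb{T}| - |\ub{T}| = \cos\vartheta$ (the horizontal displacement of the slanted main leg). Setting $h = \sin\vartheta$, Theorem \ref{GT} yields $|\ub{T}|, |\lb{T}| \in \mathds{Q}_{h}$, and hence $\cos\vartheta = |\lb{T}| - |\ub{T}| \in \mathds{Q}_{h}$. In particular there exist rationals $\alpha, \beta \in \mathds{Q}$ with $\cos\vartheta = \alpha + \beta \sin\vartheta$.

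Next, I would substitute this presentation into $\sin^2\vartheta + \cos^2\vartheta = 1$ to obtain
\[
(1+\beta^2)\sin^2\vartheta + 2\alpha\beta\sin\vartheta + (\alpha^2-1) = 0,
\]
a polynomial identity of degree at most $2$ over $\mathds{Q}$ satisfied by $\sin\vartheta$. After clearing the denominators of $\alpha$ and $\beta$, this exhibits $\sin\vartheta$ as a root of an integer polynomial of degree at most $2$, so $\sin\vartheta$ is algebraic of degree at most $2$. The linear relation $\cos\vartheta = \alpha + \beta\sin\vartheta$ places $\cos\vartheta$ in the same at most quadratic extension of $\mathds{Q}$, and either a direct analogous substitution (when $\beta = 0$ the claim is immediate, since then $\cos\vartheta = \alpha \in \mathds{Q}$ and $\sin^2\vartheta = 1 - \alpha^2$) or the linear substitution yields the same degree bound for $\cos\vartheta$.

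The main subtlety lies in upgrading this "algebraic of degree at most $2$" conclusion to the stronger \emph{algebraic integer} claim of the corollary. The leading coefficient $1 + \beta^2$ of the quadratic above is not a unit \emph{a priori}, so establishing integrality will require arguing that the rational denominators appearing in $\alpha$ and $\beta$ can be absorbed compatibly, for instance by working with rescalings of $\sin\vartheta$ and $\cos\vartheta$ whose minimal polynomials are monic over $\mathds{Z}$, or by extracting additional integrality information from the $\mathds{Q}_{h}$-decompositions of $a$ and $b$ supplied by Theorem \ref{GT}. Once that bookkeeping is completed, the degree bound of $2$ is immediate from the displayed quadratic, and the corollary follows.
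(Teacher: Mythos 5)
Your proposal follows essentially the same route as the paper: Theorem \ref{GT} applied with $h=\sin\vartheta$ gives $\cos\vartheta=\left|\lb{T}\right|-\left|\ub{T}\right|\in\mathds{Q}_{\sin\vartheta}$, i.e. $\cos\vartheta=p+q\sin\vartheta$ with $p,q\in\mathds{Q}$, and substituting into $\sin^2\vartheta+\cos^2\vartheta=1$ yields the identical quadratic $(q^2+1)\sin^2\vartheta+2pq\sin\vartheta+p^2-1=0$. The ``subtlety'' you flag about upgrading from degree at most $2$ to \emph{algebraic integer} is real but is not resolved in the paper either: the paper asserts integrality directly from this same non-monic quadratic with rational coefficients, and indeed the subsequent application of Tangsupphathawat's theorem (whose list includes $\vartheta=\pi/5$, for which $\cos\vartheta=(1+\sqrt{5})/4$ is not an algebraic integer) only requires ``algebraic number of degree at most $2$,'' so the wording should be read in that weaker sense; your argument already establishes exactly what the paper's does.
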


\begin{theorem}\label{RR}
    If $R\left(\vartheta,a\right)$ is a reptile and $\vartheta/\pi\in\mathds{Q}$, then the pair $\left(\vartheta,a\right)$ equals one of $\left(\frac{\pi}{4},\frac{1}{\sqrt{2}}\right)$, $\left(\frac{\pi}{3},1\right)$, $\left(\frac{\pi}{3},\frac{1}{2}\right)$, $\left(\frac{\pi}{3},\frac{1}{4}\right)$, $\left(\frac{\pi}{3},\frac{1}{6}\right)$, and $\left(\frac{\pi}{3},\frac{1}{8}\right)$.
\end{theorem}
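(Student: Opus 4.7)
The plan is to combine the algebraic restriction from Corollary \ref{Rii} with Theorems \ref{GT} and \ref{GT2}, then close out each surviving $\vartheta$ by direct combinatorial analysis of the possible tilings.

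First, writing $\vartheta = p\pi/q$ in lowest terms with $0<\vartheta<\pi/2$, the minimal polynomials of $\cos\vartheta$ and $\sin\vartheta$ are governed by cyclotomic polynomials, and requiring both to satisfy the bound in Corollary \ref{Rii} forces $\phi(2q)\leq 4$ and separately rules out $q=5$ (since $\sin(\pi/5)$ and $\sin(2\pi/5)$ have degree $4$ over $\mathbb{Q}$). This leaves $\vartheta\in\{\pi/6,\pi/4,\pi/3\}$. The case $\vartheta = \pi/6$ is then excluded immediately by Theorem \ref{GT}: since $|\ssl{T}|=\sin(\pi/6)=1/2\in\mathbb{Q}$, Theorem \ref{GT} would force $|\lb{T}|=a+\sqrt{3}/2$ to be rational, which is impossible. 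So only $\vartheta\in\{\pi/4,\pi/3\}$ can yield a reptile.

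For $\vartheta=\pi/4$, set $h = \sin(\pi/4)=1/\sqrt{2}$. Theorem \ref{GT} places $a$ and $b = a+1/\sqrt{2}$ in $\mathds{Q}_h$, so $a = c_{1,a}+c_{h,a}/\sqrt{2}$ with $c_{1,b}=c_{1,a}$ and $c_{h,b}=c_{h,a}+1$. When $b>1$, Theorem \ref{GT2} applies and its three sign alternatives, combined with $\cos\vartheta = \sin\vartheta$ and the bound $a<1$, quickly force $a=1/\sqrt{2}$; the remaining regime $b\leq 1$ is handled by direct inspection of which tiles can meet at a right-angle corner of $T$. For $\vartheta = \pi/3$, Theorem \ref{GT2} is unavailable (it explicitly excludes this angle), so the algebraic input is only $a\in\mathbb{Q}+\mathbb{Q}\cdot\sqrt{3}/2$ and $b=a+1/2$. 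One needs a separate argument specific to $60^\circ$ tiles to show first that $a$ must in fact be rational, and then a combinatorial enumeration along $\lb{T}$ to reduce the rationals $0<a<1$ to the five listed values $\{1,1/2,1/4,1/6,1/8\}$.

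The hardest part will be the $\pi/3$ case. The algebraic machinery of Theorem \ref{GT} only locates $a$ in an infinite rational family, and the cut-down to exactly five values must come from combinatorial restrictions on tilings. Since every vertex configuration in the tiling must combine internal angles drawn from $\{\pi/3, \pi/2, 2\pi/3\}$ into $\pi$ or $2\pi$, only a small number of local patterns are admissible along a base of $T$, and for each candidate denominator of $a$ outside $\{1,2,4,6,8\}$ one must produce an explicit angle-sum or edge-sum obstruction from the lemmas of Section \ref{Props}. This case-by-case enumeration, carried out across Sections \ref{RP3} and \ref{RP4}, is where the real substance of the theorem resides, with Section \ref{RP2} presumably devoted to the preliminary reductions and the easier $\pi/4$ case.
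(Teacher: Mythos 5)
Your angular reduction to $\vartheta\in\{\pi/4,\pi/3\}$ matches the paper: Section \ref{RP2} uses Corollary \ref{Rii} together with Tangsupphathawat's theorem to get the five candidate angles, kills $\pi/5$ and $2\pi/5$ by the degree of the sine, and kills $\pi/6$ by exactly the rationality clash you describe. After that, however, the proposal has two genuine gaps.

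First, the $\pi/4$ case cannot be closed with Theorem \ref{GT2} the way you claim. For $R(\pi/4,a)$ we have $h=1/\sqrt{2}$ and $b=a+h$, hence $c_{1,b}=c_{1,a}$ and $c_{h,b}=c_{h,a}+1$. Alternative (1) of Theorem \ref{GT2} then demands $c_{1,a}>0$ and $c_{1,b}=c_{1,a}\leq 0$ simultaneously; alternative (2) demands $c_{h,a}>0$ and $c_{h,b}=c_{h,a}+1\leq 0$; alternative (3) demands obtuseness. Every alternative is vacuous for every $a$, so the theorem cannot ``force $a=1/\sqrt{2}$'' --- read literally it would exclude the known reptile $R\left(\frac{\pi}{4},\frac{1}{\sqrt{2}}\right)$ itself, which signals that this is the wrong tool here. (Your auxiliary bound $a<1$ is also not available a priori; it is only conjectured in Section \ref{MRS}.) The paper's actual route in Section \ref{RP3} is different: write $a=\alpha+\beta/\sqrt{2}$, prove $\alpha\beta\geq 0$ from Lemma \ref{GTL}, then verify that $T$ satisfies MTM and SSTS whenever $a\neq 1/\sqrt{2}$, so that Corollary \ref{ENDSSS} forbids any tiling of $\hat{M}(T,\mu,\mu a)\cong\mu T$. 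Second, the $\pi/3$ case --- which you correctly identify as the substance of the theorem --- is left as a plan rather than an argument: neither the rationality of $a$ nor the reduction to $\{1,\frac12,\frac14,\frac16,\frac18\}$ is actually derived, and the proposed ``obstruction for each candidate denominator'' would be an infinite enumeration. The paper avoids this with a uniform argument: $\beta=0$ follows from MTM, SSTS and Corollary \ref{ENDSSS}, and then a single covering equation $u\alpha+v\left(\alpha+\frac12\right)+\frac{\sqrt{3}}{2}w=1$ for a unit segment, combined with the in-a-row restrictions from Lemma \ref{GTL}, forces $\frac{1}{\alpha}\in\{1,2\}$ or $\frac{1}{2\alpha}\in\{1,2,3,4\}$, which is exactly the five listed values.
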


\noindent
Therefore, there are at most six possible candidates for reptile unit right trapezoids whose acute angle is a rational multiple of $\pi$, where $R\left(\frac{\pi}{4},\frac{1}{\sqrt{2}}\right)$ and $R\left(\frac{\pi}{3},\frac{1}{2}\right)$ are already confirmed to be a reptile as shown in Figure \ref{fig:Int1}. Among the four remaining candidates, we show that $R\left(\frac{\pi}{3},\frac{1}{8}\right)$ is also a reptile as follows.

\begin{theorem}\label{NEW}
    The right trapezoid $R\left(\frac{\pi}{3},\frac{1}{8}\right)$ is rep-25 but not rep-$n$ for all integers $1<n<25$.
\end{theorem}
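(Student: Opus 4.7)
The theorem has two parts: existence of a rep-$25$ dissection, and nonexistence of a rep-$n$ dissection for every $1 < n < 25$. I plan to handle them in turn.

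For the existence part, I would exhibit the dissection explicitly. Placing $T := R(\pi/3, 1/8)$ with vertices $(0,0)$, $(5/8, 0)$, $(5/8, \sqrt{3}/2)$, and $(1/2, \sqrt{3}/2)$, each small tile is a copy of $T/5$ with main leg $1/5$, subsidiary leg $\sqrt{3}/10$, upper base $1/40$, and lower base $1/8$. The scale $1/5$ is natural because both the horizontal projection $1/2$ of $T$'s main leg and the height $\sqrt{3}/2$ of $T$ decompose into five equal pieces matching the corresponding projections of the small tile. A figure would display the tiling, and the verification reduces to checking that $25$ designated tiles (mixing positive and negative orientations, some of them rotated) partition $T$ along their edges.

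For the nonexistence part, suppose $T$ is rep-$n$. The tile $T_0 := T/\sqrt{n}$ has edge lengths in $\{1/\sqrt{n},\, \sqrt{3}/(2\sqrt{n}),\, 5/(8\sqrt{n}),\, 1/(8\sqrt{n})\}$, and $\lb{T}$ (of length $5/8$) is perfectly covered by tile edges lying on it. Collecting multiplicities and clearing denominators yields
\[
5\sqrt{n} \;=\; 8a + 4\sqrt{3}\, b + 5c + d, \qquad a, b, c, d \in \mathds{Z}_{\geq 0}.
\]
Since $\{1, \sqrt{3}\}$ is $\mathds{Q}$-independent and $\sqrt{n} > 0$, we conclude $\sqrt{n} \in \mathds{Q} + \mathds{Q}\sqrt{3}$, and squaring with $n \in \mathds{Z}_{>0}$ forces either $n$ to be a perfect square or $n = 3m^2$ for some positive integer $m$. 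In the latter case the coefficient matching above further yields $4 \mid m$, so $n \geq 48$. For $n < 25$ we are thus reduced to $n \in \{4, 9, 16\}$.

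To exclude these three cases I would propagate the tiling inward from the boundary. The acute vertex $(0, 0)$ admits only one tile $T_1$ with its own $\pi/3$-angle there, whose subsidiary leg runs vertically up to $(5/(8\sqrt{n}), 0)$; the residual interior angle $\pi/2$ at that point on $\lb{T}$ must be filled by a single right-angled tile whose base goes right, of length either $5/(8\sqrt{n})$ or $1/(8\sqrt{n})$. Iterating this propagation along $\lb{T}$ and cross-referencing with the forced right-angled anchor at $(5/8, 0)$, I would show that no valid completion exists for $n = 4, 9, 16$. For $n = 4$ the argument is short: one branch lands a tile's $\pi/3$-angle at the right corner of $T$, leaving an unfillable gap of $\pi/6$; the other branch forces a successor tile whose main leg extends beyond the right edge of $T$. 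The main obstacle will be the case analysis for $n = 16$, which admits the greatest branching at each propagation step; careful bookkeeping of residual angles at interior vertices, combined with the constraint that every tile lies inside $T$ and does not overlap previously placed tiles, should eliminate every remaining branch.
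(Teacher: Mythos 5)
Your reduction of the nonexistence claim to $n\in\{4,9,16\}$ is correct and is in the same spirit as the paper: the paper likewise uses the fact that every edge length lies in $\mathds{Q}_{\sqrt{3}}$ to discard all $n$ with $\sqrt{n}\notin\mathds{Z}\cup\sqrt{3}\,\mathds{Z}$, and then kills $n=3,12$ by a separate parity-type observation (it looks at the main leg of $\sqrt{n}T$, which would have to be covered by subsidiary legs alone; your $4\mid m$ count on the lower base accomplishes the same thing). The existence half is also handled at the same level of detail as in the paper, namely by pointing to a picture, so nothing is lost there, though strictly speaking you still owe the explicit $25$-tile arrangement.

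The genuine gap is the treatment of $n=4,9,16$. What you offer is a plan for an exhaustive propagation argument starting at the acute corner, with only a rough sketch for $n=4$ and an explicit admission that the branching for $n=16$ remains to be controlled; that is not yet a proof. Moreover, even the first step of your propagation omits a branch: the unique tile whose $\pi/3$-angle fills the corner at $(0,0)$ may have its \emph{main leg} along the lower base of $T$ (with its lower base along the slanted side), in which case its subsidiary leg is not vertical, so the ``forced'' right-angled successor you describe is not forced. The paper sidesteps the entire case analysis by working at the opposite end of the trapezoid. Since $\left|\ub{\sqrt{n}T}\right|=\sqrt{n}/8\leq 1/2<5/8=\left|\lb{T}\right|$, the tile $T'$ sitting in the right-angle corner where $\ub{\sqrt{n}T}$ meets $\ssl{\sqrt{n}T}$ (whose existence and position follow from the sub-to-sub analysis of Lemma \ref{L1.1.1}; STS holds here because $\rho\sqrt{3}/2$ is irrational while $1/8$, $5/8$, $1$ are rational) must place its own upper base, of length $1/8$, inside $\ub{\sqrt{n}T}$. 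The residual angle of size $\pi/3$ at the inner endpoint of $\ub{T'}$ can only be filled by a single $\pi/3$-angle of another tile, which forces a main leg or a lower base (lengths $1$ and $5/8$) onto the leftover segment of $\ub{\sqrt{n}T}$, whose length is at most $3/8$ --- a contradiction uniform in $n\in\{4,9,16\}$. I recommend you replace your propagation scheme with an argument of this kind; as written, the nonexistence claim for $n=9$ and $n=16$ is unproven.
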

\noindent
The first property is illustrated in the figure below. It is clear that $R\left(\frac{\pi}{3},\frac{1}{8}\right)$ is similar to neither of the four reptile trapezoids in Figure \ref{fig:Int1}. 
\begin{figure}[H]
    \centering
    \includegraphics[width=0.25\linewidth]{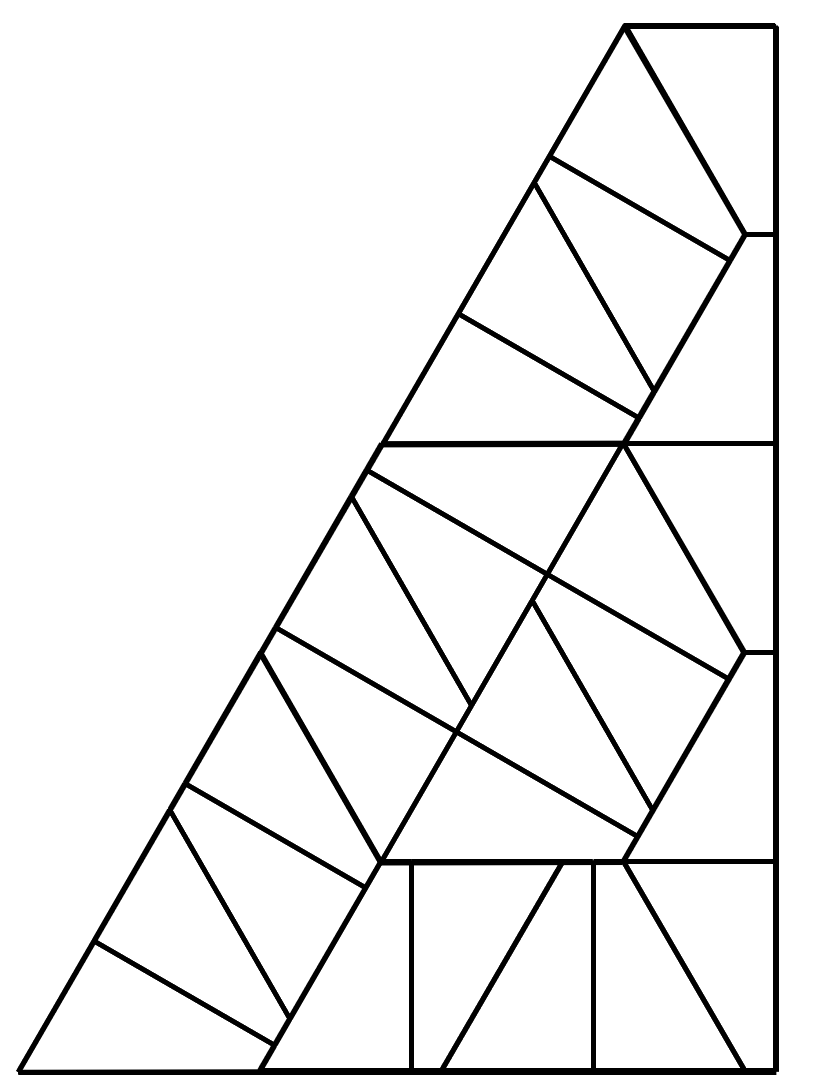}
    \caption{Reptile right trapezoid $R\left(\frac{\pi}{3},\frac{1}{8}\right)$}
    \label{fig:newrep}
\end{figure}

\begin{figure}[H]
\captionsetup[subfigure]{labelformat=empty}
      \centering
	   \begin{subfigure}{0.2\linewidth}
		\includegraphics[width=\linewidth]{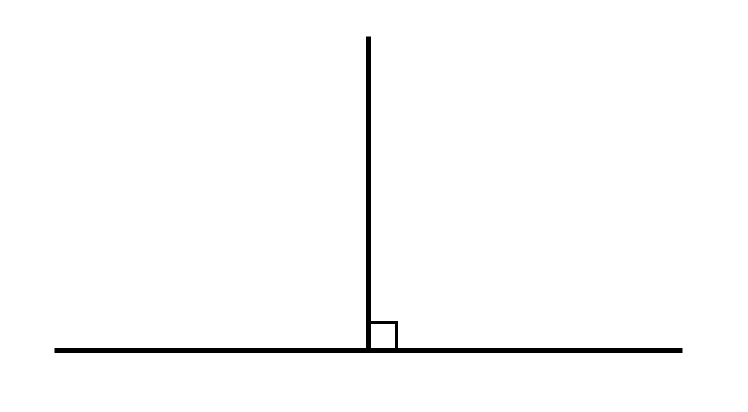}
		\caption{$\frac{\pi}{2}+\frac{\pi}{2}+\pi$}
		\label{fig:dis3}
	   \end{subfigure}
	   \begin{subfigure}{0.2\linewidth}
		\includegraphics[width=\linewidth]{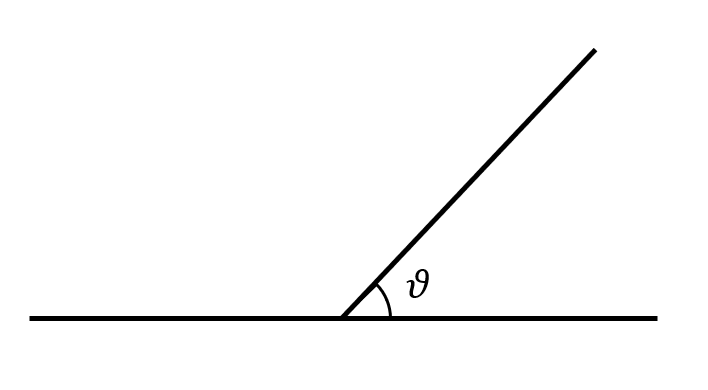}
		\caption{$\vartheta+\vartheta^{*}+\pi$}
		\label{fig:dis4}
	    \end{subfigure}
	     \begin{subfigure}{0.2\linewidth}
		 \includegraphics[width=\linewidth]{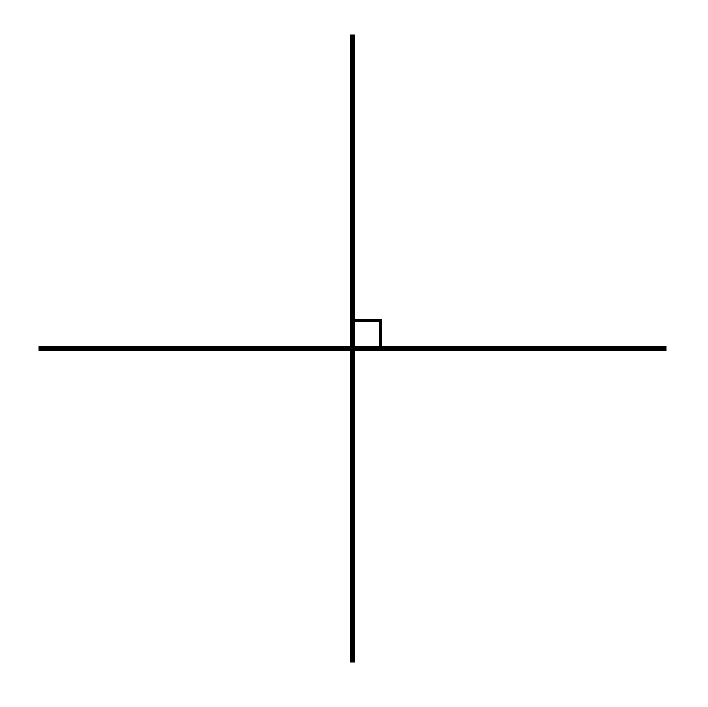}
		 \caption{$\frac{\pi}{2}+\frac{\pi}{2}+\frac{\pi}{2}+\frac{\pi}{2}$}
		 \label{fig:dis5}
	      \end{subfigure}
	       \begin{subfigure}{0.2\linewidth}
		  \includegraphics[width=\linewidth]{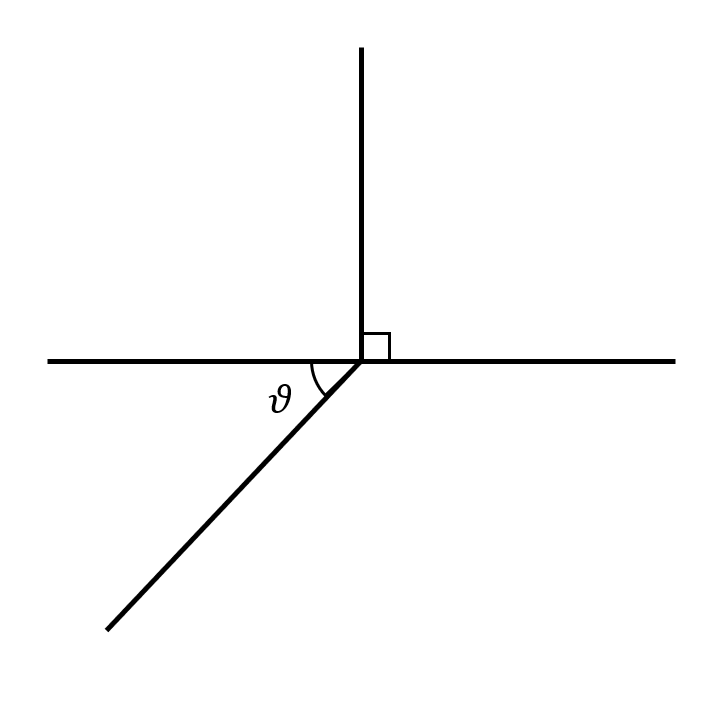}
		  \caption{$\frac{\pi}{2}+\frac{\pi}{2}+\vartheta+\vartheta^{*}$}
		  \label{fig:dis6}
	       \end{subfigure}
        	       \begin{subfigure}{0.2\linewidth}
		  \includegraphics[width=\linewidth]{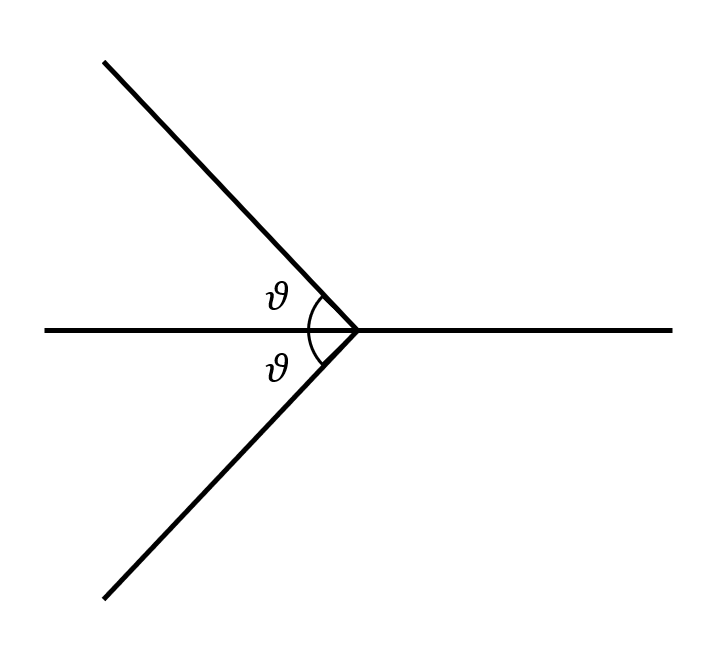}
		  \caption{$\vartheta^{*}+\vartheta+\vartheta+\vartheta^{*}$}
		  \label{fig:dis7}
	       \end{subfigure}
        	       \begin{subfigure}{0.2\linewidth}
		  \includegraphics[width=\linewidth]{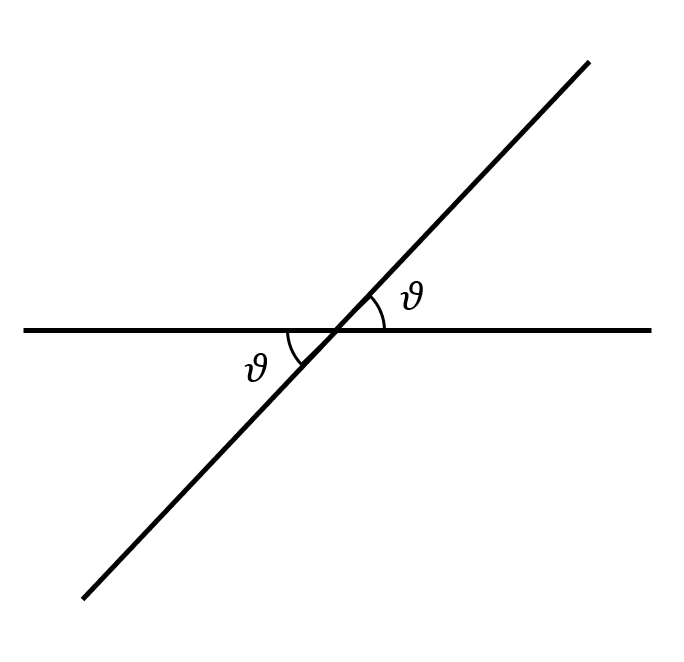}
		  \caption{$\vartheta+\vartheta^{*}+\vartheta+\vartheta^{*}$}
		  \label{fig:dis8}
	       \end{subfigure}
	\caption{All possible dissections of $2\pi$ up to reflection and rotation}
	\label{fig:dissectang1}
\end{figure}

Although the above results provide a pretty clear view of characterizing reptile $I\left(\vartheta,a\right)$ and $R\left(\vartheta,a\right)$ with rational $\vartheta/\pi$, there is yet relatively fewer clues for characterizing those with irrational $\vartheta/\pi$. Fortunately, when $\vartheta/\pi\notin\mathds{Q}$, ways of dissecting an angle of size $2\pi$ into internal angles of a given trapezoid are limited, compared to cases $\vartheta/\pi\in\mathds{Q}$, and can be characterized identically regardless of the size of $\vartheta$ (see Figure \ref{fig:dissectang1} for the complete characterization, where $\vartheta^{*}=\pi-\vartheta$). This might help exclude many possibilities when directly examining whether congruent copies of a given trapezoid can tile their bigger copy.

Along with the preceding main results, we propose some conjectures as follows.
\begin{conjecture}
    If a unit trapezoid $T$ is a reptile, then $\left|\ub{T}\right|\leq 1$.
\end{conjecture}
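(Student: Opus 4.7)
The plan is to leverage Theorems \ref{GT} and \ref{GT2} together with a boundary-combinatorics argument applied to a rep-$n$ tiling $\mathfrak{T}$ of $\sqrt{n}\,T$ by congruent copies of $T$. Write $a=\left|\ub{T}\right|$, $b=\left|\lb{T}\right|$, and $h=\left|\ssl{T}\right|$, and assume for contradiction that $a>1$. By Theorem \ref{GT}, the four edge lengths $a,b,1,h$ all lie in the at-most-$2$-dimensional $\mathds{Q}$-vector space $\mathds{Q}_h$. In the rational-$h$ case, all four lengths become rational; and in the obtuse subcase of Theorem \ref{GT2} with $c_{h,a}=c_{h,b}=0$, the situation again collapses to rationality. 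These cases should be handled first, by a direct count of edges along the two bases of $\sqrt{n}\,T$ modulo short linear relations.

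For the remaining (irrational) cases, I would analyze the \emph{top row} of tiles, namely, the tiles in $\mathfrak{T}$ whose edges perfectly cover $\ub{\sqrt{n}\,T}$. The two corners of $\sqrt{n}\,T$ adjacent to that base have internal angles $\pi-\theta(T)$ and $\pi-\psi(T)$, and by Figure \ref{fig:dissectang1} (or by direct enumeration in the rational-angle case) only a short list of local configurations can fill each corner. Once the two corner tiles are pinned down, the identity $\sqrt{n}\,a=\sum_{i}|E_{i}|$ with each $|E_{i}|\in\{a,b,1,h\}$ becomes a linear equation in $\mathds{Q}_h$. Projecting onto the $1$- and $h$-coordinates, and combining with the sign restrictions in Theorem \ref{GT2} on $c_{1,a},c_{h,a},c_{1,b},c_{h,b}$, one should be able to constrain the multiset of edges that may appear on the top row severely, and, I expect, to force either an edge of length $b$ or else a configuration incompatible with $a>1$.

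The hard part will be \emph{propagating} such a constraint from the top row down through the whole tiling. Even after the top strip is classified, the positions of its lower edges determine the geometry of the row beneath, and without a monotone invariant a naive induction does not close. One promising route is to introduce a $\mathds{Q}$-valued potential on horizontal cross-sections of $\mathfrak{T}$, summing the $c_{h,\cdot}$-coefficients of the edge segments cut by the cross-section, and to argue that $a>1$ is incompatible with this potential being both conserved as the cross-section moves from $\ub{\sqrt{n}\,T}$ down to $\lb{\sqrt{n}\,T}$ and consistent with the $\mathds{Q}_h$-coordinates of the two bases. Establishing such a conservation law, and checking that the slanted main leg of $\sqrt{n}\,T$ cannot absorb the discrepancy through exotic tile interactions along its boundary, is almost certainly where the genuine obstruction lies, and is presumably the reason the statement is offered here as a conjecture rather than a theorem.
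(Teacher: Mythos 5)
This statement is stated in the paper as a conjecture and the paper contains no proof of it, so there is nothing to compare your attempt against; the only question is whether your proposal closes the gap on its own, and it does not. You acknowledge this yourself in the final paragraph: the ``conservation law'' for your proposed $\mathds{Q}$-valued potential on horizontal cross-sections is never defined precisely, let alone proven, and without it the argument does not propagate any constraint from the top row of the tiling to the rest of it. As written, the proposal is a research plan with a named obstruction, not a proof.

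Beyond that central gap, the earlier steps are also asserted rather than established. In the rational case ($h\in\mathds{Q}$, or $c_{h,a}=c_{h,b}=0$), all four edge lengths lie in $\mathds{Q}$, and the identity $\mu a = pa+qb+r+sh$ with non-negative integers (Lemma \ref{GTL}) admits many solutions whether or not $a>1$; a ``direct count of edges along the two bases'' gives no evident contradiction with $a>1$, and you do not say what the contradiction would be. In the irrational case, projecting $\sum_i |E_i| = \mu a$ onto the $1$- and $h$-coordinates and invoking the sign conditions of Theorem \ref{GT2} yields inequalities on the multiset of top-row edges, but you only ``expect'' this to force an incompatible configuration; note also that Theorem \ref{GT2} requires $b>1$ and excludes $\pi/3$-right trapezoids, so those hypotheses would have to be discharged separately. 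Finally, if you work with a rep-$n$ tiling of $\sqrt{n}\,T$ for non-square $n$, the quantity $\sqrt{n}\,a$ need not lie in $\mathds{Q}_h$ at all; you would first need to pass to rep-$n^2$ as the paper does in its proof of Theorem \ref{GT}. None of these is necessarily fatal to the overall strategy, but each is a step that must actually be carried out, and the propagation step in particular is exactly the open difficulty that makes this a conjecture.
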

\begin{conjecture}
    If a unit trapezoid $T$ is a reptile, then both $\theta(T)/\pi$ and $\psi(T)/\pi$ are rational, where $\psi(T)=\pi/2$ when $T$ is a right trapezoid.
\end{conjecture}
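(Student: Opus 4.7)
The plan is to combine angle-sum conditions at tiling vertices with the length data from Theorems \ref{GT} and \ref{GT2}, aiming to derive a contradiction from the assumption that one of $\theta(T)/\pi$ or $\psi(T)/\pi$ is irrational. For a right trapezoid, $\psi(T)=\pi/2$ is automatic and only $\theta(T)/\pi \in \mathds{Q}$ needs to be shown, so the problem reduces to that case and to the generic non-right case where $1, \theta(T)/\pi, \psi(T)/\pi$ are linearly independent over $\mathds{Q}$ (the mixed subcase in which exactly one of the two angles is a rational multiple of $\pi$ would be treated separately by a similar argument).

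Under the linear-independence assumption, I would analyze vertex configurations of any reptile tiling $\mathfrak{T}$ of $\mu T$ by $n$ copies of $T$. At each interior vertex, the tile corners meeting there (together with straight-angle contributions from T-junctions) satisfy an integer relation $(p-q)\theta + (r-s)\psi + (q+s+t)\pi = 2\pi$, where $p, q, r, s$ count tile corners of the four angle types and $t$ counts T-junctions. Linear independence forces $p=q$, $r=s$, and $q+s+t=2$, leaving only the rigid configurations of Figure \ref{fig:dissectang1} (in the right trapezoid case) and their obvious analogues in general. Similarly, each boundary non-corner vertex is constrained to a configuration with $q+s+t=1$, and each corner of $\mu T$ is occupied by a single matching tile corner. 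The upshot is that $\mathfrak{T}$ is forced into a very rigid combinatorial structure reminiscent of a rhombus tiling.

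The rigidity would then be combined with the length data of Theorems \ref{GT} and \ref{GT2}, which require $|\ub{T}|, |\lb{T}| \in \mathds{Q}_{|\ssl{T}|}$ with controlled signs of the coefficients, together with $\mu=\sqrt{n}$ from area counting. The idea is to track how tile edges accumulate along paths in $\mathfrak{T}$ between distinct corners of $\mu T$, producing an identity among expressions in $1, |\ssl{T}|$ that cannot be satisfied unless at least one interior vertex actually realizes a ``non-degenerate'' configuration --- one for which $(p-q)\theta + (r-s)\psi$ is a nonzero rational multiple of $\pi$. Such a configuration contradicts linear independence and so forces $\theta(T)/\pi$ and $\psi(T)/\pi$ into $\mathds{Q}$.

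The hard part will be the last step: converting the rigid vertex structure and the algebraic length constraints into a genuine geometric obstruction. A Dehn-invariant-style argument on horizontal and vertical projections of tile edges, or an adaptation of Laczkovich's dissection techniques from \cite{laczkovich2023quadrilateral}, seems the most natural starting point, but neither immediately produces the desired identity. Since the statement is posed here as a conjecture rather than a theorem, I expect a full proof to require nontrivial new input beyond the results of the paper, and the outline above is intended only as a framework within which such an argument would be developed.
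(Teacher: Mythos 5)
This statement is posed in the paper as an open conjecture; the paper contains no proof of it, only the accompanying remark that when $\vartheta/\pi\notin\mathds{Q}$ the possible dissections of a $2\pi$ angle into internal angles of the trapezoid are the rigid ones listed in Figure \ref{fig:dissectang1}. Your vertex-configuration analysis essentially reproduces that remark: linear independence of $1$, $\theta(T)/\pi$, $\psi(T)/\pi$ over $\mathds{Q}$ forces $p=q$ and $r=s$ at every vertex, which is exactly the content of the figure. So up to that point you are consistent with the paper, but you have not gone beyond what the paper already observes without claiming a proof.

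The genuine gap is the final step, and you identify it yourself: the ``degenerate'' configurations with $p=q$ and $r=s$ are not contradictory --- they are precisely the local structure of perfectly legitimate tilings (e.g.\ the parallelogram clusters $v_1v_2v_2'v_1'$ and $t_1t_2t_2't_1'$ constructed in Lemmas \ref{L1.1} and \ref{L1.1.1.1} realize exactly the $\vartheta+\vartheta^{*}+\pi$ and $\vartheta+\vartheta^{*}+\vartheta+\vartheta^{*}$ patterns). Hence no contradiction can be extracted vertex-by-vertex; one needs a global obstruction showing that a trapezoid (rather than a parallelogram) cannot be assembled entirely from such rigid configurations, and neither the edge-path-tracking idea nor the Dehn-invariant idea is developed far enough to produce the required identity. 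The appeal to Theorems \ref{GT} and \ref{GT2} also does not obviously interact with the angle data: those theorems constrain edge lengths in $\mathds{Q}_{\left|\ssl{T}\right|}$ but say nothing about $\theta(T)$ or $\psi(T)$ directly. As written, the proposal is a plausible framework for attacking the conjecture but not a proof, and since the paper likewise offers no proof, the statement remains open.
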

\begin{conjecture}
    If $R\left(\frac{\pi}{3},a\right)$ is a reptile, then $a$ equals either $1$ or $1/8$.
\end{conjecture}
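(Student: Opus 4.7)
The plan is to use Theorem \ref{RR} to reduce the conjecture to a finite case analysis. Since the conjecture fixes $\vartheta = \pi/3$, which is a rational multiple of $\pi$, Theorem \ref{RR} immediately restricts $a$ to the finite set $\{1,\, 1/2,\, 1/4,\, 1/6,\, 1/8\}$. The conjecture then asserts that, within this list, exactly $a = 1$ and $a = 1/8$ yield reptile unit trapezoids. Theorem \ref{NEW} already handles $a = 1/8$, so the remaining work splits into two tasks: constructing an explicit reptile dissection of $R(\pi/3, 1)$, and ruling out reptility for each of the other three candidates $a \in \{1/2,\, 1/4,\, 1/6\}$.

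For the construction at $a = 1$, the edge lengths of $R(\pi/3, 1)$ are $\{1,\, 3/2,\, 1,\, \sqrt{3}/2\}$. I would first examine small clusters: two copies glued along the subsidiary leg form the isosceles trapezoid $I(\pi/3, 2)$, while two copies glued along the main leg form a rectangle of dimensions $5/2 \times \sqrt{3}/2$, and further combinations yield hexagonal or parallelogram-shaped clusters. Any of these intermediate shapes admitting a self-similar dissection could in principle be refined back into right-trapezoidal tiles, producing a rep-$n$ tiling of $R(\pi/3, 1)$. Failing a direct construction via such clusters, the next step is a systematic search over small rep-numbers $n$, guided by the vertex configurations of Figure \ref{fig:dissectang1} specialized to $\vartheta = \pi/3$ and $\pi/2$.

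For each candidate $a \in \{1/2,\, 1/4,\, 1/6\}$ to be excluded, I would adapt the methodology of the proof of Theorem \ref{RR}. The recipe is to compute the edge-length multiset $\{1,\, \sqrt{3}/2,\, a,\, a + 1/2\}$, lift into the vector space $\mathds{Q}_{\sqrt{3}/2}$, and impose the constraint that along every straight segment in a hypothetical tiling the rational and $\sqrt{3}/2$-components of adjacent tile edges must balance independently. Combined with the restricted local vertex configurations and the positive/negative tile classification of Section \ref{TEM}, this should force rigid patterns that can be propagated from boundary vertices inward until an obstruction appears; the key in each case is to identify which local configuration is forced and how it fails to close up into a similar copy of $R(\pi/3, a)$.

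The main obstacle is that all three candidates to be excluded pass every local necessary condition established in the paper, including Theorems \ref{GT} and \ref{GT2}, Corollaries \ref{Cii} and \ref{Rii}, and the angle-dissection classification of Figure \ref{fig:dissectang1}. Any exclusion proof must therefore exploit a genuinely global structural invariant of a hypothetical tiling, such as a weighted tile count indexed by the orientation of the main leg, or a diophantine invariant relating the numbers of positive and negative tiles to the scaling factor $\sqrt{n}$. Devising such an invariant that cleanly distinguishes $a \in \{1,\, 1/8\}$ from the remaining values is, to my mind, the genuine content of the conjecture and likely requires ideas beyond the techniques developed here.
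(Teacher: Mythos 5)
This statement is presented in the paper as a \emph{conjecture}; the paper gives no proof of it, so there is no argument of the author's to compare yours against. Your opening reduction is sound as far as it goes: Theorem \ref{RR} confines $a$ to $\{1,1/2,1/4,1/6,1/8\}$ and Theorem \ref{NEW} settles $a=1/8$. But everything after that is a research plan rather than a proof, and you say so yourself: you neither exhibit a reptile dissection of $R(\pi/3,1)$ nor exclude any of $1/2$, $1/4$, $1/6$, and you correctly observe that all five candidates pass every necessary condition established in the paper, so the local techniques available here cannot separate them. As a proof attempt this is therefore incomplete by your own admission.

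There is also a concrete problem with the target itself. The paper asserts in Section \ref{MRS} (and Figure \ref{fig:Int1}) that $R\left(\frac{\pi}{3},\frac{1}{2}\right)$ is \emph{already confirmed} to be a reptile --- it is one of the four classically known reptile trapezoids. Taken literally, that falsifies the conjecture as stated and makes your plan to ``rule out'' $a=1/2$ unachievable. Almost certainly the conjecture's ``$1$'' is a typo for ``$1/2$'': the intended claim is that the only reptiles among $R\left(\frac{\pi}{3},a\right)$ are the two already known ones ($a=1/2$ and $a=1/8$), with $a=1$, $1/4$, $1/6$ to be excluded. Under that reading, your effort to construct a reptile dissection of $R\left(\frac{\pi}{3},1\right)$ is aimed at the wrong case --- $a=1$ is a value to be ruled out, not realized. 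In short: the statement is open, your proposal does not close it, and part of it is directed at the wrong cases.
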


The outline of the proof of the main results is as follows. As a preliminary step, we first consider a polygon containing a specific polygonal line in its boundary and some area surrounded by the polygonal line in its region. Then, provided that the polygon is tiled with congruent copies of a given unit trapezoid $T$, we observe how the tiles should cover the polygonal line and a neighboring area if $T$ or the given tiling satisfies some properties; this is similar to the argument of Laczkovich in \cite[Section 4]{laczkovich2023quadrilateral} that concentrates on a local structure of a polygon. Based on this observation, we prove that such properties that are met by $T$ or the tiling lead to the non-existence of certain tilings of trapezoids in some families consisting of those obtained from a larger copy of $T$ by elongating or shortening its bases. During this step, we distinguish the case when $T$ is a right trapezoid from when it is not, for a right internal angle of $T$ necessitates a different approach.

After the preliminary step, we prove Theorem \ref{GT} by contradiction. For the sake of contradiction, we assume that congruent copies of a given unit trapezoid $T$ tile its larger copy $\mu T$ for some integer $\mu\geq 3$ (i.e. $T$ is rep-$\mu^2$), but at least one of $\left|\ub{T}\right|$ and $\left|\lb{T}\right|$ is not contained in $\mathds{Q}_{\left|\ssl{T}\right|}$. We first derive a contradiction for the case when $\left|\ub{T}\right|\notin\mathds{Q}_{\left|\ssl{T}\right|}$ and $\left|\lb{T}\right|\in\mathds{Q}_{\left|\ssl{T}\right|}$ from a number-theoretical approach, which is similar to the arguments of S. L. Snover et al. in \cite{snover1991rep} and of Laczkovich in \cite[Section 3]{laczkovich2023quadrilateral}. For the remaining case, we use an analogous number-theoretical method to show that $T$ or the given tiling satisfies some of the properties introduced in the first step. We then derive a contradiction by applying the results in the preliminary step and, therefore, conclude that $T$ is not rep-$\mu^2$. This result is sufficient enough to imply that $T$ is not a reptile even though we neglected cases when $\mu$ equals either $2$ or the square root of some square-free integer; note that if $T$ is rep-$n$, then it is also rep-$n^m$ for all integers $m\geq 2$ because, for each integer $i\geq 0$, the trapezoid $n^{(i+1)/2} T$ can be tiled with $n$ congruent copies of $n^{i/2} T$ if $T$ is rep-$n$.

Based on the result of Theorem \ref{GT}, we prove Corollary \ref{Rii} by using a trigonometric equation. By additionally applying the result of P. Tangsupphathawat in \cite[Theorem 3.3]{tangsupphathawat2014algebraic} to the main acute angle of a given unit trapezoid, we prove Corollary \ref{Cii} and the angle-related part of Theorem \ref{RR}. 

The proof of Theorem \ref{GT2} and the edge-related part of Theorem \ref{RR} is straightforward. We first assume that a given unit trapezoid $T$ is a reptile, particularly rep-$\mu^2$ for some integer $\mu\geq 3$. Similar to the proof of Theorem \ref{GT}, we consider a tiling of a larger copy $\mu T$ with congruent copies of $T$. By applying the previous number-theoretical approach, we eliminate contradictory cases where some of the properties introduced in the first step are satisfied, giving us the desired results.

Finally, Theorem \ref{NEW} will be proven by investigating one by one whether congruent copies of the unit trapezoid $T=R\left(\frac{\pi}{3},\frac{1}{8}\right)$ can tile its larger copy $\sqrt{n}T$ for some integer $1<n<25$.

\section{Three properties: MTM, STS, and SSTS}\label{Props}
This section introduces three properties regarding situations tiling a given area and provides some related lemmas for further discussion. Throughout the section, $T$ refers to an arbitrarily chosen unit trapezoid. In addition, we put $a=\left|\ub{T}\right|$, $b=\left|\lb{T}\right|,$ and $h=\left|\ssl{T}\right|$.

For further reference, we introduce three terminologies for a given tiling $\mathfrak{T}$ consisting of congruent copies of $T$. We say $\mathfrak{T}$ is \textbf{main-to-main} if the main leg of each tile in $\mathfrak{T}$ cannot be perfectly covered by edges of tiles in $\mathfrak{T}$ including at least one $\lb{\te}$. Moreover, $\mathfrak{T}$ is said to be \textbf{strictly main-to-main} if the main leg of each tile in $\mathfrak{T}$ cannot be perfectly covered by edges of tiles in $\mathfrak{T}$ including at least one of $\lb{\te}$ and $\ub{\te}$. We also say $\mathfrak{T}$ is \textbf{sub-to-sub} if the subsidiary leg of each tile in $\mathfrak{T}$ cannot be perfectly covered by edges of tiles in $\mathfrak{T}$ including at least one $\lb{\te}$. Note that every subcollection of $\mathfrak{T}$ is (strictly) main-to-main (resp. sub-to-sub) if $\mathfrak{T}$ is (strictly) main-to-main (resp. sub-to-sub). Furthermore, it follows immediately from the definition that every strictly main-to-main tiling is main-to-main.

\subsection{Main-to-main property}\label{MTMP}
Let $\rho$ be a positive integer and $\alpha$ be a positive real number. In addition, let $u_0, u_1,$ and $u_2$ be points on $\mathds{R}^2$ such that $\left|\ov{u_0}{u_1}\right|=\alpha$, $\left|\ov{u_1}{u_2}\right|=\rho$, and $\angle u_0u_1u_2=\theta(T)$; let $M\left(T, \rho, \alpha\right)$ be a polygon whose boundary contains the polygonal chain $\left[u_0,u_1,u_2\right]$ and whose region contains the triangle $u_0u_1u_2$. We will say that $T$ satisfies the \textbf{main-to-main property} (abbreviated to \textbf{MTM}) if it satisfies the following condition: for every positive integer $\rho$, positive real number $\alpha$, and polygon $M\left(T, \rho, \alpha\right)$ corresponding to these two numbers, if there is a tiling $\mathfrak{T}$ of $M\left(T, \rho, \alpha\right)$ with congruent copies of $T$, and if one $\lb{\te}$ lies on the line segment $\ov{u_1}{u_2}$, then $\ov{u_1}{u_2}$ is not perfectly covered by edges of tiles in $\mathfrak{T}$. The following lemma describes how MTM constrains the possibility of perfectly covering $\ov{u_1}{u_2}$.

\begin{lemma}\label{L1}
    Let $\rho$ be a positive integer and $\alpha$ be a positive real number. In addition, let $M\left(T, \rho, \alpha\right)$ be a polygon corresponding to these two numbers and the three points $u_0,u_1,$ and $u_2$ in the way explained earlier. Suppose that $T$ satisfies the main-to-main property and there is a tiling $\mathfrak{T}$ of $M\left(T, \rho, \alpha\right)$ with congruent copies of $T$ such that $\ov{u_1}{u_2}$ is perfectly covered by some edges of tiles in $\mathfrak{T}$. Then, there are tiles $T_1,\cdots,T_{\rho}\in\mathfrak{T}$ satisfying the following (see Figure \ref{fig:L1}, for instance):
    \begin{enumerate}
        \item $\ml{T_1},\ml{T_2},\cdots,\ml{T_{\rho}}$ perfectly covers $\ov{u_1}{u_2}$ in the order
        $$
        \left(\ml{T_1},\ml{T_2},\cdots, \ml{T_{\rho}}\right);
        $$
        \item $\ov{u_0}{u_1}\subset\lb{T_1}$ or vice versa;
        \item $\ub{T_i}\varsubsetneq\lb{T_{i+1}}$ for each $i=1, 2, \cdots, \rho-1$.
    \end{enumerate}
\end{lemma}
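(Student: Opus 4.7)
The plan is to deploy the main-to-main property twofold. Applied directly to the given tiling $\mathfrak{T}$ of $M(T,\rho,\alpha)$, MTM immediately forbids any $\lb{\te}$ from lying on $\ov{u_1}{u_2}$, so every edge of a tile lying on $\ov{u_1}{u_2}$ must be one of $\ub{\te}$, $\ml{\te}$, or $\ssl{\te}$; I will call this the first consequence. The second use will be to reapply MTM to sub-configurations in order to exclude certain local patterns at each $\theta(T)$-corner that would otherwise break the desired structure. Armed with these, I would construct the tiles $T_1,\ldots,T_\rho$ inductively, moving from $u_1$ toward $u_2$.

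For the base step, let $T_1$ be the tile whose edge along $\ov{u_1}{u_2}$ contains $u_1$. Since the interior angle of $M(T,\rho,\alpha)$ at $u_1$ equals $\theta(T)$, the vertex of $T_1$ at $u_1$ contributes an acute angle of size at most $\theta(T)$. Working through each remaining edge type in turn: a $\ub{T_1}$-edge at $u_1$ would force the interior angle of $T_1$ there to be $\pi-\theta(T)$ or $\pi-\psi(T)$, both obtuse, contradicting the acute bound; a $\ssl{T_1}$-edge at $u_1$ would force a $\psi(T)$-vertex of $T_1$ (the other option $\pi-\psi(T)$ being obtuse), which can only happen when $\psi(T)\leq\theta(T)$, and this case is to be excluded by the recursive use of MTM outlined below; the remaining case, a $\ml{T_1}$-edge, pins the $\theta(T)$-vertex of $T_1$ at $u_1$, so that the other edge of $T_1$ at this vertex, $\lb{T_1}$, runs along the ray of $\ov{u_0}{u_1}$. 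Comparing $\alpha=|\ov{u_0}{u_1}|$ with $b=|\lb{T_1}|$ gives conclusion (2). For the inductive step, let $v_1$ be the endpoint of $\ml{T_1}$ distinct from $u_1$. The interior angle of $T_1$ at $v_1$ is $\pi-\theta(T)$, so the residual angle at $v_1$ on the interior of $M(T,\rho,\alpha)$ is $\theta(T)$, formed between the ray of $\ub{T_1}$ and the continuation of $\ov{u_1}{u_2}$ toward $u_2$. I would recognize this as a local $M(T,\rho-1,a)$-configuration (with $\ub{T_1}$ playing the role of $\ov{u_0}{u_1}$) and rerun the base-step argument on the sub-tiling obtained by removing $T_1$, producing $T_2$ with $\ml{T_2}$ continuing along $\ov{u_1}{u_2}$ and $\lb{T_2}$ lying on the extension of $\ub{T_1}$. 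Since $|\lb{T_2}|=b>a=|\ub{T_1}|$, we obtain $\ub{T_1}\varsubsetneq\lb{T_2}$. Iterating $\rho$ times produces tiles $T_1,\ldots,T_\rho$ whose main legs perfectly cover $\ov{u_1}{u_2}$ in the stated order, yielding conclusions (1) and (3).

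The main obstacle is excluding the $\ssl{\te}$-edge case at each $\theta(T)$-corner, i.e., configurations in which the corner is filled by $k\geq 2$ tiles each contributing a $\psi(T)$-vertex with $\theta(T)=k\psi(T)$. The plan is to rule these out by a recursive application of MTM: once an $\ssl$-edge appears at a $\theta(T)$-corner of a sub-polygon of the form $M(T,\rho',\alpha')$, tracing the tiles along $\ov{u_1}{u_2}$ propagates a forced pattern of $\ssl{\te}$-edges in the acute case, or a more involved pattern of $\ssl{\te}$- and $\ub{\te}$-edges in the obtuse case, and closing this pattern consistently with the integer length $\rho'$ eventually pins a $\lb{\te}$ somewhere on the relevant boundary segment, contradicting MTM. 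Making this cascade rigorous, with a separate treatment of the acute, obtuse, and isosceles subcases, is where the bulk of the technical work is expected to lie.
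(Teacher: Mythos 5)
Your overall skeleton is the paper's: use MTM (plus the hypothesis that $\ov{u_1}{u_2}$ is perfectly covered) to forbid any $\lb{\te}$ from lying on $\ov{u_1}{u_2}$, rule out $\ub{\te}$ at each $\theta(T)$-corner by an angle bound, place $\ml{T_1}$ along $\ov{u_1}{u_2}$ with $\lb{T_1}$ on the ray toward $u_0$, and then recurse on the residual $\theta(T)$-angle at the far endpoint of $\ml{T_1}$, viewing it as an $M(T,\rho-1,a)$-configuration; the inclusions in (2) and (3) follow from comparing $\alpha$ with $b$ and from $b>a$. That part is sound and matches the paper.

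The genuine gap is the step you yourself flag as "the bulk of the technical work": the treatment of legs at a $\theta(T)$-corner. The missing observation is that $\theta(T)$ is the \emph{smallest} internal angle of $T$: since the two legs subtend the same height, $\sin\theta(T)=\left|\ssl{T}\right|\sin\psi(T)$ with $\left|\ssl{T}\right|\leq\left|\ml{T}\right|=1$, so $\psi(T)\geq\theta(T)$ (and $\psi(T)=\pi/2>\theta(T)$ for right trapezoids), with equality exactly when $T$ is isosceles. This kills your "main obstacle" in one line: a corner of size $\theta(T)$ cannot be filled by $k\geq 2$ internal angles of tiles, since each such angle is at least $\theta(T)$; so only a single $\theta(\te)$ (i.e.\ a single tile with its smallest vertex at the corner) can fill it, and no cascade argument is needed. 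More seriously, your plan to \emph{exclude} the remaining $\ssl{}$-case by a recursive contradiction cannot succeed: that case forces $\psi(T)=\theta(T)$, i.e.\ $T$ isosceles, and for an isosceles trapezoid the two legs are congruent and, by the paper's convention, each is simultaneously the main and the subsidiary leg, so the configuration you are trying to rule out is exactly the desired one up to relabelling. Any attempt to derive a contradiction there must fail, and as written the proposal neither carries out the cascade nor could it terminate correctly in this subcase. With the inequality $\psi(T)\geq\theta(T)$ in hand and the isosceles convention invoked instead of the cascade, the rest of your argument goes through and coincides with the paper's proof.
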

\begin{figure}[H]
    \centering
    \includegraphics[width=0.4\linewidth]{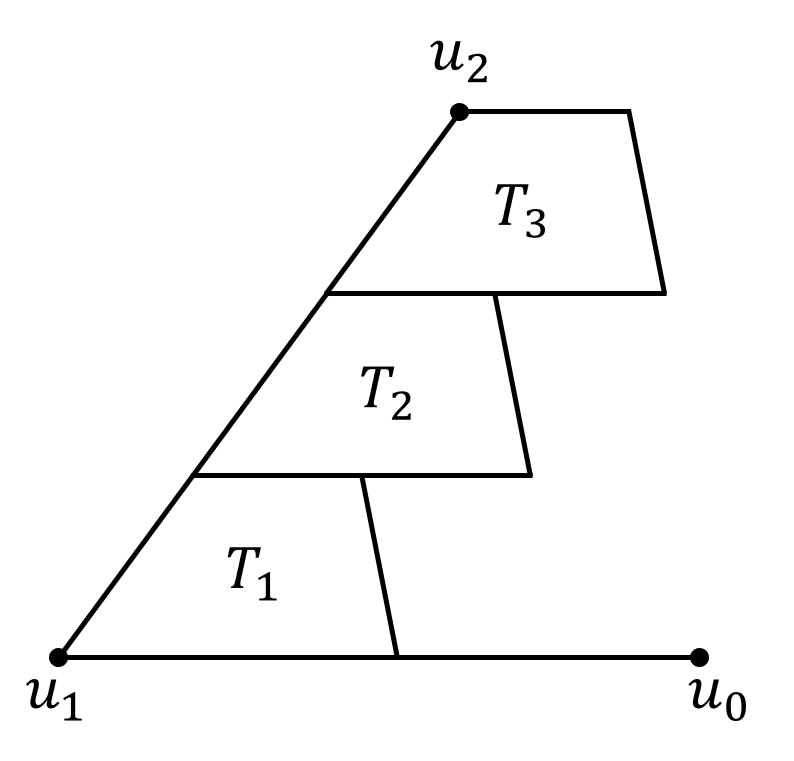}
    \caption{$M(T,3,\alpha)$ and three acute tiles $T_1,T_2,$ and $T_3$}
    \label{fig:L1}
\end{figure}

\begin{proof}
    The acute angle $\theta(T)$ is the smallest internal angle of $T$, so only a single $\theta\left(\te\right)$ can fill the internal angle $\angle u_0u_1u_2$ of $M(T,\rho,\alpha)$; let $T_1\in\mathfrak{T}$ be the tile such that $\theta\left(T_1\right)$ fills $\angle u_0u_1u_2$. Since $T$ has MTM, and since $\ov{u_1}{u_2}$ is perfectly covered by some edges of tiles in $\mathfrak{T}$, the edge $\ml{T_1}$ lies on $\ov{u_1}{u_2}$ instead of $\lb{T_1}$. Moreover, we have $\ov{u_0}{u_1}\subset\lb{T_1}$ if $\alpha\leq b$, and vice versa if $\alpha>b$. This completes the proof for the case $\rho=1$.
    
    Next, assume $\rho\geq 2$. Similar to the case $\rho=1$, since the acute angle between $\ov{u_1}{u_2}$ and $\ub{T_1}$ has the same size as $\theta(T)$, there is a tile $T_2\in\mathfrak{T}$ such that $\theta\left(T_2\right)$ fills that acute angle. For the same reason, $\ml{T_2}$ lies on $\ov{u_1}{u_2}$, and $\ml{T_1}\cap\ml{T_2}$ is a one-point set. Furthermore, $\ub{T_1}$ is properly contained in $\lb{T_2}$, for $b>a$. Inductively, we can select tiles $T_2, \cdots, T_{\rho}\in\mathfrak{T}$, along with $T_1$, satisfying the desired properties.
\end{proof}

Although the above lemma rules out any other possibilities of perfectly covering $\ov{u_1}{u_2}$ with edges of tiles in $\mathfrak{T}$, there is no guarantee that each subsidiary leg of $T_i$ is perfectly covered by a single subsidiary leg of another tile in $\mathfrak{T}$. If we assume this to be true, then we can obtain a helpful cluster polygon that can be utilized as a building block for deriving the non-existence of certain tilings of some polygons with congruent copies of $T$.

To describe this more precisely, given a positive integer $\rho$ and a positive real number $\alpha$, let $\hat{M}\left(T,\rho,\alpha\right)$ be the trapezoid $v_1v_2v_3v_4$, where
    \begin{align*}
        v_1&=(0,0),\\
        v_2&=\left(\rho\cos{\theta(T)},\rho\sin{\theta(T)}\right),\\
        v_3&=\left(\rho\cos{\theta(T)}+\alpha,\rho\sin{\theta(T)}\right)\\
        v_4&=\left(\rho\left(\cos{\theta(T)}+c(T)h\cos{\psi(T)}\right)+\alpha,0\right)
    \end{align*}
are points in $\mathds{R}^2$ and $c(T)$ is defined by
$$
c(T):=\begin{cases}
        1 &\text{if $T$ is acute}\\
        0 &\text{if $T$ is a right trapezoid}\\
        -1 &\text{if $T$ is obtuse}
\end{cases}.
$$
In Section \ref{SSTSP}, provided that $T$ is a right trapezoid, we will introduce a property called SSTS for $T$ related to perfectly covering each $\ssl{T_i}$ in the desired way and show that MTM and SSTS together imply that congruent copies of $T$ cannot tile $\hat{M}\left(T,\rho,\alpha\right)$. For the case when $T$ is not a right trapezoid, we bring property for a tiling $\mathfrak{T}$ rather than $T$ to show the non-existence of sub-to-sub tilings of $\hat{M}\left(T,\rho,\alpha\right)$ with congruent copies of $T$. The following lemma says that assuming a given tiling is sub-to-sub ensures that constructing the desired cluster polygon is feasible when $T$ is a non-right trapezoid.

\begin{lemma}\label{L1.1}
   Let $\rho$ be a positive integer, and $\alpha$ be a positive real number such that $\alpha>a$. Suppose that $T$ is not a right trapezoid and satisfies the main-to-main property. If $\mathfrak{T}$ is a sub-to-sub tiling of $\hat{M}\left(T,\rho,\alpha\right)$ with congruent copies of $T$, then the parallelogram $v_1v_2v_2'v_1'$ is a cluster in $\mathfrak{T}$, where $v'_i=v_i+\left(a+b,0\right)$ for each $i=1, 2$.
\end{lemma}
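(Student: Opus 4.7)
The plan is to build the parallelogram cluster layer-by-layer, each layer itself being a parallelogram formed by two congruent copies of $T$ glued along their subsidiary legs. First, I would apply Lemma~\ref{L1} to $\hat{M}(T,\rho,\alpha)$ with $u_1=v_1$, $u_2=v_2$, and $u_0=(\alpha,0)$; the hypothesis $\alpha>a$ ensures $u_0$ lies on the lower base of $\hat{M}(T,\rho,\alpha)$, and the main-to-main property of $T$ licenses the use of the lemma. This yields tiles $T_1,T_2,\ldots,T_\rho\in\mathfrak{T}$ whose main legs perfectly cover $\ov{v_1}{v_2}$ in order, each $T_i$ placed so that its acute angle $\theta(T)$ sits at the lower endpoint of $\ml{T_i}$.

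For each $i$, the intended partner is $T_i'$, the image of $T_i$ under the $180^\circ$ rotation about the midpoint of $\ssl{T_i}$. A direct check shows that $P_i:=T_i\cup T_i'$ is a parallelogram of side lengths $1$ and $a+b$ and that $\bigcup_{i=1}^{\rho}P_i=v_1 v_2 v_2' v_1'$. I would then prove by induction on $i$ that $T_i'\in\mathfrak{T}$. Having placed $T_1,T_1',\ldots,T_{i-1},T_{i-1}'$, the vertex at one endpoint of $\ssl{T_i}$ (the upper endpoint when $T$ is acute, the lower one when $T$ is obtuse) is the pinch point: here $T_i$ occupies an angle of $\pi-\psi(T)$, and a straight horizontal edge ($\lb{T_{i+1}}$ or the upper base of $\hat{M}(T,\rho,\alpha)$ in the acute case, $\lb{T_{i-1}'}$ or the lower base of $\hat{M}(T,\rho,\alpha)$ in the obtuse case) contributes the adjacent $\pi$, leaving a gap of exactly $\psi(T)$. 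Since the only internal angles of $T$ that do not exceed $\psi(T)<\pi/2$ are $\theta(T)$ and $\psi(T)$ themselves, the gap must be filled either by a single tile at angle $\psi(T)$ or by a chain of tiles with angle $\theta(T)$.

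The decisive step is excluding the $\theta(T)$-chain. A tile at the pinch vertex carrying angle $\theta(T)$ has its $\ml$-edge and $\lb$-edge leaving the vertex, both of length strictly greater than $h=|\ssl{T_i}|$ in the non-isosceles setting; a short geometric check shows that placing either edge along the line of $\ssl{T_i}$ would push the tile out of $\hat{M}(T,\rho,\alpha)$, while placing the $\ml$ along the adjacent horizontal segment forces this $\ml$ to be perfectly covered by a $\lb$, contradicting MTM. The sub-to-sub hypothesis analogously rules out any perfect cover of $\ssl{T_i}$ that uses a $\lb$. This leaves only the configuration in which a single tile of angle $\psi(T)$ fills the gap, and its two edges must align with $\ssl{T_i}$ and with the horizontal boundary of the gap, which pins it down to be exactly $T_i'$. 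The principal obstacle is making the case analysis at the pinch vertex airtight: acute and obtuse $T$ require separate treatments, boundary cases (when the pinch lies on $\partial\hat{M}(T,\rho,\alpha)$) need independent attention, the isosceles subcase $h=1$ must be addressed since $\ml$s then fit inside $\ssl$s, and when $\theta(T)$ is small relative to $\psi(T)$ one must iteratively eliminate longer chains of $\theta(T)$-angled tiles via repeated applications of MTM before the induction can close.
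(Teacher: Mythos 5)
Your proposal is correct and follows essentially the same route as the paper: invoke Lemma \ref{L1} to pin down $T_1,\dots,T_\rho$, then at an endpoint of each $\ssl{T_i}$ observe that the residual angle is exactly $\psi(T)$, exclude fillings by $\theta\left(\te\right)$'s using the sub-to-sub hypothesis together with the edge-length inequalities $b>1>h$ (resp.\ $h=1$ in the isosceles case), and conclude that a single $\psi\left(\te\right)$ forces the point-reflected partner tile, building the parallelogram layer by layer. The one sub-case your sketch glosses over is the obtuse one, where the $\psi$-angled tile could present its upper base (of length $a$, possibly less than $h$) rather than its subsidiary leg along $\ssl{T_i}$; the paper disposes of this by noting that the leftover acute angle of size $\theta(T)$ then forces a main leg or lower base onto $\ssl{T_i}$, which your own length and sub-to-sub arguments already rule out.
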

\begin{proof}

   Let $\mathfrak{T}$ be a sub-to-sub tiling of the trapezoid $v_1v_2v_3v_4$ with congruent copies of $T$, and let $T_1, T_2, \cdots, T_{\rho}\in\mathfrak{T}$ be tiles satisfying the three properties in Lemma \ref{L1} with $u_0,u_1,$ and $u_2$ replaced to $v_4,v_1,$ and $v_2$, respectively. Note that no $T_i$ intersects with $\ov{v_3}{v_4}$. We first deal with the case $\rho=1$. The two line segments $\ov{v_1}{v_4}$ and $\ov{v_2}{v_3}$ both contain an endpoint of $\ssl{T_1}$ in their interior. Thus, it immediately follows that $\ssl{T_1}$ is perfectly covered by some edges of tiles in $\mathfrak{T}$ other than $T_1$. Let $\Theta_1$ be the angle between $\ssl{T_1}$ and $\ov{v_1}{v_4}$ which is not an internal angle of $T_1$, and $\Theta_2$ be the analogous angle between $\ssl{T_1}$ and $\ov{v_2}{v_3}$ (see Figure \ref{fig:MTMCON1}). Since $\ov{v_1}{v_4}$ and $\ov{v_2}{v_3}$ are parallel, we have $\Theta_1+\Theta_2=\pi$. Thus, $\Theta_2$ is acute if $\Theta_1$ is obtuse, and vice versa. Moreover, neither of these angles equals $\pi/2$, for $T$ is not a right trapezoid.
   \begin{figure}[H]
    \centering
    \includegraphics[width=0.4\linewidth]{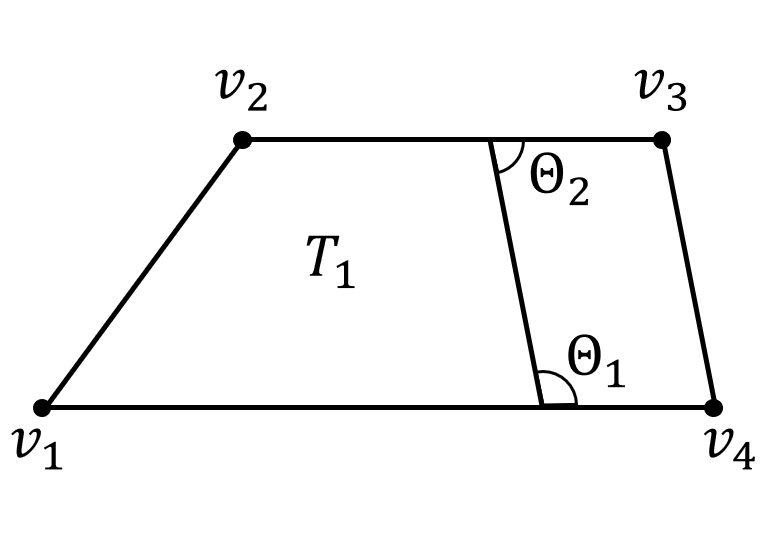}
    \caption{$M\left(T,1,\alpha\right)$ and the acute tile $T_1$}
    \label{fig:MTMCON1}
\end{figure}

    If $\Theta_1<\pi/2$, then $T$ is obtuse, and there are two possibilities for filling $\Theta_1$: filling it with one or more $\theta\left(\te\right)$'s or with a single $\psi\left(\te\right)$. This is because $\theta(T)$ is the smallest internal angle of $T$, and $\Theta_1$ has the same size as $\psi(T)$, the second smallest internal angle of $T$. For either possibility, there is a tile $T'\in\mathfrak{T}$ such that one of its edges lies on $\ssl{T_1}$ and $\Theta_1$ is filled with one or more angles including one acute internal angle of $T'$. Since $\mathfrak{T}$ is sub-to-sub, the former possibility permits only $\ml{T'}$ to lie on $\ssl{T_1}$. However, this cannot be achieved since $h<1$. On the other hand, for the latter possibility, either $\ub{T'}$ or $\ssl{T'}$ lies on $\ssl{T_1}$. If $\ub{T'}\subset\ssl{T_1}$, then a single $\theta\left(\te\right)$ fills the acute angle between $\ml{T'}$ and $\ssl{T_1}$. This implies that the main leg or the lower base of another tile in $\mathfrak{T}$ lies on $\ssl{T_1}$, which is a contradiction. Instead, if $\ssl{T'}\subset\ssl{T_1}$, then we obtain the cluster parallelogram $v_1v_2'v_2'v_1'$ tiled with $T_1$ and $T'$. Hence, the problem statement holds for the case $\Theta_1<\pi/2$.

    Next, suppose that $\Theta_1>\pi/2$. Then, $T$ is acute. Similar to the preceding case, $\Theta_2$ has the same size as $\psi(T)$ and is filled with one or more $\theta\left(\te\right)$'s or with a single $\psi\left(\te\right)$. If $T$ is not isosceles, then neither $\lb{\te}$ nor $\ml{\te}$ can lie on $\ssl{T_1}$, and thus $\ssl{T_1}$ is covered by the subsidiary leg of another tile in $\mathfrak{T}$. The same goes for the case when $T$ is isosceles except that a leg of another tile in $\mathfrak{T}$, whether it be the main leg or the subsidiary leg, can lie on the subsidiary leg of $T_1$. In either case, the parallelogram $v_1v_2'v_2'v_1'$ is a cluster in $\mathfrak{T}$, and thus the problem statement is true for $\Theta_1>\pi/2$. This ends the proof for $\rho=1$.

    Now, we assume $\rho\geq 2$. Since $b>a$, the interior of $\lb{T_2}$ contains an endpoint of $\ssl{T_1}$. We then follow the same argument for $\rho=1$ and observe that there is a tile $T_1'\neq T_1$ in $\mathfrak{T}$ whose subsidiary leg perfectly covers $\ssl{T_1}$. Moreover, from
    $$
    \left|\ub{T_1}\cup\lb{T_1'}\right|=\left|\ub{T_1}\right|+\left|\lb{T_1'}\right|=a+b>b=\left|\lb{T_2}\right|,
    $$
    it follows that the line segment $\ub{T_1}\cup\lb{T_1'}$ contains one endpoint of $\ssl{T_2}$ in its interior. The other endpoint is contained in the interior of $\ov{v_2}{v_3}$ if $\rho=2$, and that of $\lb{T_3}$ if $\rho\geq 3$. In either case, $\ssl{T_2}$ is perfectly covered by the subsidiary leg another tile $T_2'\neq T_2$ in $\mathfrak{T}$. Repeating this process, we can inductively select tiles $T_2', \cdots, T_{\rho}'$, along with $T_1'$, such that $T_i'\neq T_i$ and $\ssl{T_i'}=\ssl{T_i}$ for each $i=1, 2, \cdots, \rho$, and
    $$
    \lb{T_{j+1}}\cup\ub{T_{j+1}'}=\ub{T_j}\cup\lb{T_j'}
    $$
    for each $j=1, 2, \cdots, \rho-1$ (see Figure \ref{fig:MTMCON2}, for instance). We can observe that $\cup_{i=1}^{\rho}\ml{T_i'}$ is the closed line segment with two endpoints $v_1'$ and $v_2'$. Therefore, we conclude that the parallelogram $v_1v_2v_2'v_1'$ is a cluster in $\mathfrak{T}$ tiled with $T_1, T_2, \cdots, T_{\rho},$ $T_1', T_2', \cdots, T_{\rho}'$.

    \begin{figure}[H]
    \centering
    \includegraphics[width=0.5\linewidth]{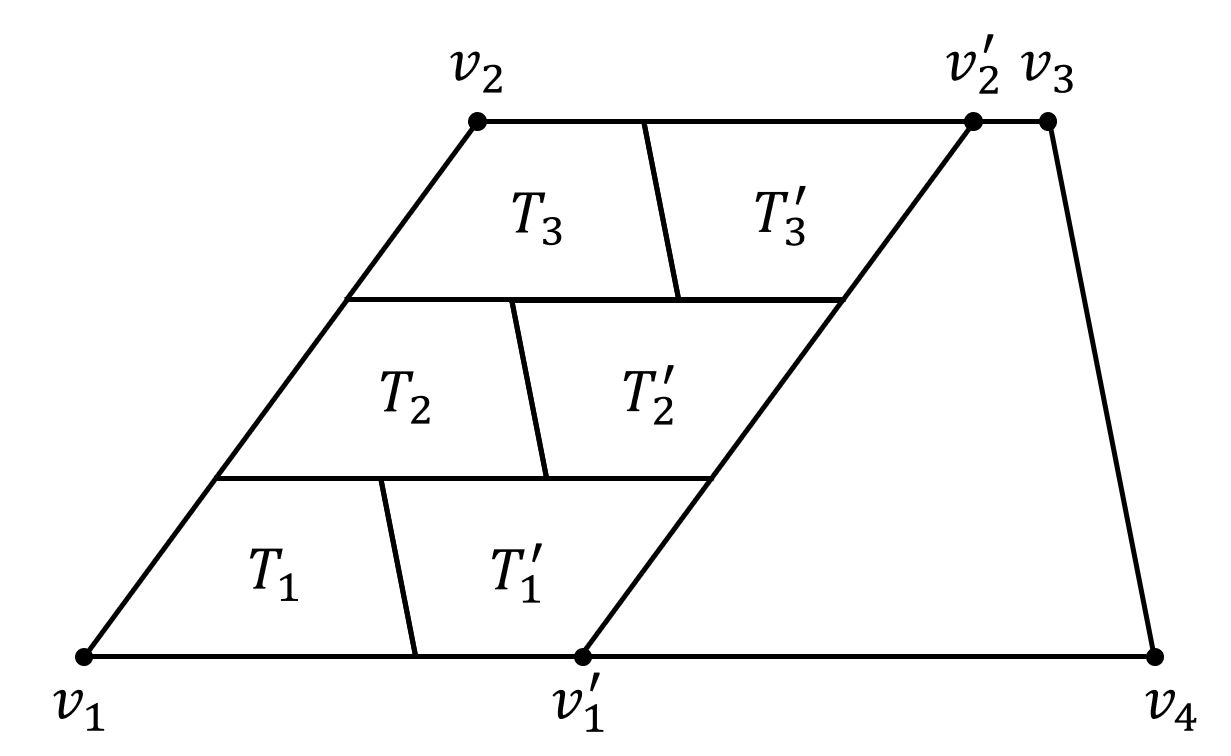}
    \caption{Cluster $v_1v_2v_2'v_1'$ when $\rho=3$}
    \label{fig:MTMCON2}
\end{figure}

\end{proof}

It turns out that MTM is sufficient enough to imply the non-existence of sub-to-sub tilings of $\hat{M}\left(T,\rho,\alpha\right)$ with congruent copies of $T$ when $\alpha\leq a+b$, as the following lemma shows.

\begin{lemma}\label{NOM}
    Let $\rho\geq 3$ be a positive integer, and $\alpha$ be a positive real number such that $\alpha\leq a+b$. If $T$ is not a right trapezoid and satisfies the main-to-main property, then there are no sub-to-sub tilings $\mathfrak{T}$ of the trapezoid $\hat{M}\left(T,\rho,\alpha\right)$ with congruent copies of $T$.
\end{lemma}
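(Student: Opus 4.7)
The plan is to assume for contradiction that $\mathfrak{T}$ is a sub-to-sub tiling of $\hat M(T,\rho,\alpha)$ and derive a contradiction by splitting on $\alpha\in(0,a+b]$. The opening move is to apply Lemma~\ref{L1} to the polygonal chain $[v_4,v_1,v_2]$, producing a row $T_1,\ldots,T_\rho\in\mathfrak{T}$ whose main legs perfectly cover $\ov{v_1}{v_2}$ in order. A direct check (the orientation of each $T_i$ is pinned down by its $\theta$-vertex at $p_{i-1}=((i-1)\cos\theta,(i-1)\sin\theta)$ and by the position of its lower base) shows that all the $T_i$ are translates of one another, with lower bases parallel to the positive $x$-axis; in particular, the upper base of the topmost tile $T_\rho$ is the horizontal segment from $v_2$ to $v_2+(a,0)$.

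If $\alpha<a$, this point $v_2+(a,0)$ sits strictly to the right of $v_3$ at height $\rho\sin\theta$, hence outside $\hat M$, and since $T_\rho\subset\hat M$ this is a direct contradiction. If $\alpha=a$, the tile $T_\rho$ caps the top: its upper base coincides with $\ov{v_2}{v_3}$ and, using the defining identity $b-a=\cos\theta+c(T)h\cos\psi$, one verifies that its subsidiary leg lies along $\ov{v_3}{v_4}$. Deleting $T_\rho$ leaves the trapezoid $\hat M(T,\rho-1,b)$, still tiled by the sub-to-sub subcollection $\mathfrak{T}\setminus\{T_\rho\}$; since $b>a$, Lemma~\ref{L1.1} applies and forces a parallelogram cluster of top-width $a+b$, but the available top has length only $b$, a contradiction. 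If $a<\alpha<a+b$, Lemma~\ref{L1.1} applies directly to $\hat M$ and again demands a parallelogram cluster of top-width $a+b>\alpha$, which has no room along $\ov{v_2}{v_3}$, another contradiction.

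The remaining case $\alpha=a+b$ is the most delicate, because here Lemma~\ref{L1.1} produces a parallelogram cluster $P=v_1v_2v_3v_1'$ (with $v_1'=(a+b,0)$ and $v_2'=v_3$) that fills the left portion of $\hat M$ exactly, leaving as complement the triangle with vertices $v_1',v_3,v_4$, tiled by the remaining sub-to-sub tiles. I would then apply Lemma~\ref{L1} a second time, now to the chain $[v_4,v_1',v_3]$ inside this triangle, producing a fresh row $T_1^\ast,\ldots,T_\rho^\ast$ covering $\ov{v_1'}{v_3}$. As in the opening move, the top tile $T_\rho^\ast$ has upper base extending horizontally from $v_3$; but $v_3$ is the apex of the triangle, so any horizontal step from $v_3$ exits the region immediately, yielding the final contradiction.

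The main obstacle I anticipate is the geometric bookkeeping in the $\alpha=a$ reduction, where one must verify uniformly for acute and obtuse $T$ that $\hat M(T,\rho,a)\setminus T_\rho$ really equals $\hat M(T,\rho-1,b)$; this reduces to checking that the new right boundary lies on the old subsidiary leg $\ov{v_3}{v_4}$ via the relation among $a$, $b$, and $c(T)h\cos\psi$. Once this and the analogous check confirming Lemma~\ref{L1} applies inside the triangle of the $\alpha=a+b$ case are in hand, Lemma~\ref{L1.1}'s $(a+b)$-wide parallelogram acts as the uniform obstruction across all cases.
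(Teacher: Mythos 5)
Your proposal is correct and follows essentially the same route as the paper: the identical case split on $\alpha$ (below $a$, equal to $a$, strictly between $a$ and $a+b$, equal to $a+b$), the same reduction of $\alpha=a$ to $\hat{M}\left(T,\rho-1,b\right)$ by deleting the top tile, and the same use of Lemma \ref{L1.1}'s parallelogram cluster as the obstruction. The only cosmetic difference is in the final $\alpha=a+b$ step, where the paper phrases the contradiction as an angle inequality at the apex $v_2'$ of the residual triangle while you observe directly that the horizontal upper base of the top tile must exit the triangle there; these are the same geometric fact.
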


\begin{proof}
    For the sake of contradiction, assume that such a tiling $\mathfrak{T}$ exists. If $a<\alpha<a+b$, then it follows from Lemma \ref{L1.1} that the parallelogram $v_1v_2v_2'v_1'$ is a cluster in $\mathfrak{T}$ but it is not contained in $\hat{M}\left(T,\rho,\alpha\right)$; this is a contradiction. We also obtain the same result for $\alpha<a$ because at least one $\ub{\te}$ lies on the upper base of $\hat{M}\left(T,\rho,\alpha\right)$ due to Lemma \ref{L1}.
    
    Next, assume $\alpha=a$. By Lemma \ref{L1}, we can select tiles $T_1, T_2, \cdots, T_{\rho}\in\mathfrak{T}$ satisfying the three properties in Lemma \ref{L1} with $u_0,u_1,$ and $u_2$ replaced to $v_4,v_1,$ and $v_2$, respectively. Put $\mathfrak{T}_0=\mathfrak{T}\backslash\{T_{\rho}\}$. Here, $\mathfrak{T}_0$ is a sub-to-sub tiling of the trapezoid $\hat{M}\left(T,\rho, a\right)\backslash T_{\rho}$ congruent to $\hat{M}\left(T,\rho-1, b\right)$ with congruent copies of $T$. Then, by following the argument for $a<\alpha<a+b$, we derive a contradiction. Therefore, no tilings of $\hat{M}\left(T,\rho,\alpha\right)$ with congruent copies of $T$ are sub-to-sub if $\alpha<a+b$.

    The only remaining case is $\alpha=a+b$. From Lemma \ref{L1.1}, it follows that the parallelogram $v_1v_2v_2'v_1'$ is a cluster in $\mathfrak{T}$; let $\mathfrak{T}_1$ be the collection of all tiles in $\mathfrak{T}$ not contained in the cluster $v_1v_2v_2'v_1'$. Then, $\mathfrak{T}_1$ is a tiling of the triangle $v_1'v_2'v_4$. Moreover, by Lemma \ref{L1}, there is a tile in $\mathfrak{T}_1$ whose main leg is lying on $\ov{v_1'}{v_2'}$ and whose obtuse internal angle of size $\pi-\theta(T)$ fills the internal angle $\angle v_1'v_2'v_4$ of the triangle $v_1'v_2'v_4$ possibly along with other internal angles of some tiles in $\mathfrak{T}_1$. This implies that
    $$
    \pi-\theta(T)\leq\angle v_1'v_2'v_4<\pi-\psi(T),
    $$
    which is a contradiction since $\theta(T)\leq\psi(T)$. Hence, the problem statement holds for $\alpha=a+b$. This ends the proof.
\end{proof}

Combining Lemma \ref{L1.1} and \ref{NOM}, we can generalize the result of Lemma \ref{NOM} to trapezoids $\hat{M}\left(T,\rho,\alpha\right)$ with arbitrary $\alpha>0$ as follows.

\begin{corollary}\label{ENDM}
    Let $\rho\geq 3$ be a positive integer, and $\alpha$ be a positive real number. Suppose that $T$ is not a right trapezoid and satisfies the main-to-main property. Then, there are no sub-to-sub tilings $\mathfrak{T}$ of the trapezoid $\hat{M}\left(T,\rho,\alpha\right)$ with congruent copies of $T$.
\end{corollary}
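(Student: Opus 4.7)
The plan is to perform induction on $k = \lceil \alpha/(a+b) \rceil$, reducing the general case to Lemma \ref{NOM}. The base case $k = 1$, i.e., $0 < \alpha \leq a+b$, is precisely the content of Lemma \ref{NOM}. For the inductive step, assume $\alpha > a+b$ so that $k \geq 2$, and suppose for contradiction that $\mathfrak{T}$ is a sub-to-sub tiling of $\hat{M}(T,\rho,\alpha)$ by congruent copies of $T$.

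Since $\alpha > a+b > a$, Lemma \ref{L1.1} applies and produces the parallelogram $P = v_1 v_2 v_2' v_1'$, with $v_i' = v_i + (a+b, 0)$, as a cluster in $\mathfrak{T}$. The hypothesis $\alpha > a+b$ ensures $P$ lies strictly inside $\hat{M}(T,\rho,\alpha)$, so the complement $\hat{M}(T,\rho,\alpha) \setminus P$ is the quadrilateral with vertices $v_1', v_2', v_3, v_4$. A direct coordinate check shows this complement is congruent to $\hat{M}(T, \rho, \alpha - (a+b))$: the left leg $v_1' v_2'$ still has length $\rho$ and makes angle $\theta(T)$ with the horizontal, the right side $v_3 v_4$ is unchanged, and the top base shrinks from $\alpha$ to $\alpha - (a+b)$.

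Let $\mathfrak{T}'$ be the subcollection of tiles in $\mathfrak{T}$ not contained in $P$. Because $P$ is a cluster, $\mathfrak{T}'$ tiles the complement, i.e., a copy of $\hat{M}(T, \rho, \alpha - (a+b))$, and as a subcollection of a sub-to-sub tiling it is itself sub-to-sub. Since $\lceil(\alpha - (a+b))/(a+b)\rceil = k - 1$, applying the induction hypothesis to $\mathfrak{T}'$ yields the desired contradiction.

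The only real subtlety I foresee is bookkeeping: verifying that the peeled-off complement is genuinely congruent to a smaller $\hat{M}$-trapezoid (a routine check of vertex coordinates and internal angles), and checking that the hypotheses of Lemma \ref{L1.1} continue to hold at each step, namely that $T$ is still not a right trapezoid, still satisfies MTM, and the new $\alpha$ still exceeds $a$ in every iteration before the last. The substantive work is already done in Lemmas \ref{L1.1} and \ref{NOM}; this corollary only propagates their conclusions by iteratively stripping parallelogram clusters off the left side of $\hat{M}(T,\rho,\alpha)$ until the base case of Lemma \ref{NOM} is reached.
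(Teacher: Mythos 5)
Your proposal is correct and follows essentially the same route as the paper: both peel off the parallelogram cluster supplied by Lemma \ref{L1.1} to reduce $\hat{M}(T,\rho,\alpha)$ to a congruent copy of $\hat{M}(T,\rho,\alpha-(a+b))$, iterating until Lemma \ref{NOM} applies; your single induction on $\lceil \alpha/(a+b)\rceil$ is just a slightly tidier packaging of the paper's case split on $\alpha = q(a+b)+r$ with $r>0$ versus $r=0$.
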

\begin{proof}
    Since $\alpha>0$ and $a+b>0$, we can express $\alpha$ in a form $q(a+b)+r$, where $q$ is a non-negative integer and $r$ is a non-negative real number such that $r<a+b$. We separately consider two cases: $r>0$ and $r=0$.
    
    For the former case, fix $r$ to some positive number $r'<a+b$. We then use induction over the integer $q$. The case $q=0$ immediately follows from Lemma \ref{NOM}. Choose a positive integer $n$, and suppose that the problem statement holds when $(q,r)=\left(n-1,r'\right)$. We further assume, for the sake of contradiction, that there is a sub-to-sub tiling $\mathfrak{T}_n$ of $\hat{M}\left(T,\rho,n(a+b)+r'\right)$ with congruent copies of $T$. Then, by Lemma \ref{L1.1}, the parallelogram $v_1v_2v_2'v_1'$ is a cluster in $\mathfrak{T}_n$, where the points $v_i$ and $v_i'$ are those given in Lemma \ref{L1.1}. Let $\mathfrak{T}_{n-1}$ be the collection of tiles in $\mathfrak{T}$ not contained in the cluster $v_1v_2v_2'v_1'$. We can observe that $\mathfrak{T}_{n-1}$ is a sub-to-sub tiling of the remaining region of $\hat{M}\left(T,\rho,n(a+b)+r'\right)$ not covered by the cluster $v_1v_2v_2'v_1'$, which is a trapezoid congruent to $\hat{M}\left(T,\rho,(n-1)(a+b)+r'\right)$. This contradicts the induction hypothesis, and thus we conclude that the problem statement also holds for $(q,r)=\left(n,r'\right)$. This completes the induction.

    The latter case can be proven analogously. The initial case $(q,r)=(1,0)$ follows from Lemma \ref{L1.1}, and the further inductive steps can be proven in the same way as the case $r>0$. This ends the proof.
\end{proof}

\subsection{Sub-to-sub property}\label{STSP}
The MTM property in the preceding section can be understood as a constraint for perfectly covering the main leg of some large trapezoid whose main acute angle has the same size as $\theta(T)$. We introduce a subsidiary-leg version of this property, provided that $T$ is not an isosceles trapezoid. Here, we exclude isosceles trapezoids from our discussion because an isosceles trapezoid's main leg and subsidiary leg can be interchanged. Before we go on, we define the following functions assigning to each non-right trapezoid $Q$ an edge of $Q$.
\begin{align*}
    E(Q)&=\begin{cases}
        \lb{Q} &\text{if $Q$ is acute}\\
        \ub{Q} &\text{if $Q$ is obtuse}
    \end{cases}\\
    F(Q)&=\begin{cases}
        \ub{Q} &\text{if $Q$ is acute}\\
        \lb{Q} &\text{if $Q$ is obtuse}
    \end{cases}
\end{align*}

Suppose that $T$ is not an isosceles trapezoid. Let $\rho$ be a positive integer and $\alpha$ be a positive real number. In addition, let $r_0,r_1,$ and $r_2$ be points on $\mathds{R}^2$ such that $\left|\ov{r_0}{r_1}\right|=\alpha$, $\left|\ov{r_1}{r_2}\right|=\rho h$, and $\angle r_0r_1r_2=\psi(T)$, where we put $\psi(T)=\pi/2$ if $T$ is a right trapezoid; let $S^{\circ}\left(T,\rho,\alpha\right)$ be a polygon whose boundary contains the polygonal chain $\left[r_0,r_1,r_2\right]$ and whose region contains the triangle $r_0r_1r_2$. We will say that $T$ satisfies the \textbf{sub-to-sub property} (abbreviated to \textbf{STS}) if it satisfies the following condition: for every positive integer $\rho$, positive real number $\alpha$, and polygon $S^{\circ}\left(T,\rho,\alpha\right)$ corresponding to these two numbers, if there is a tiling $\mathfrak{T}$ of $S^{\circ}\left(T,\rho,\alpha\right)$ with congruent copies of $T$, and if one $\lb{\te}$ lies on the line segment $\ov{r_1}{r_2}$, then $\ov{r_1}{r_2}$ is not perfectly covered by edges of tiles in $\mathfrak{T}$. As the following lemma shows, STS uniquely determines how edges of tiles in a given tiling perfectly cover a given line segment, which is somewhat analogous to what MTM implies for $\ov{u_1}{u_2}$.

\begin{lemma}\label{L1.1.1}
    Suppose that $T$ is not an isosceles trapezoid and satisfies the sub-to-sub property. Let $\rho$ be a positive integer and $\alpha$ be a positive real number. Put
    \begin{align*}
    s_0&=\left(\alpha, 0\right),\\
    s_1&=\left(0,0\right),\\
    s_2&=\left(\rho h\cos{\psi(T)},\rho h\sin{\psi(T)}\right),\\
    s_3&=\left(\rho h\cos{\psi(T)}+\alpha,\rho h\sin{\psi(T)}\right),
    \end{align*}
    each of which a point in $\mathds{R}^2$, and let $S\left(T,\rho,\alpha\right)$ be a polygon whose boundary contains the polygonal chain $\left[s_0,s_1,s_2,s_3\right]$ and whose region contains the parallelogram $s_0s_1s_2s_3$. If $\mathfrak{T}$ is a tiling of $S\left(T, \rho, \alpha\right)$ with congruent copies of $T$, then there are tiles $T_1,\cdots,T_{\rho}\in\mathfrak{T}$ satisfying the following (see Figure \ref{fig:L1.1}, for instance):
    \begin{enumerate}
        \item $\ssl{T_1},\ssl{T_2},\cdots,\ssl{T_{\rho}}$ perfectly covers $\ov{s_1}{s_2}$ in the order
        $$
        \left(\ssl{T_1},\ssl{T_2},\cdots, \ssl{T_{\rho}}\right);
        $$
        \item $\ov{s_0}{s_1}\subset E\left(T_1\right)$ or vice versa, if $T$ is not a right trapezoid;
        \item $\ov{s_2}{s_3}\subset F\left(T_{\rho}\right)$ or vice versa, if $T$ is not a right trapezoid;
        \item $s_1\in T_1$ and $s_2\in T_{\rho}$, if $T$ is a right trapezoid;
        \item $\ub{T_i}\varsubsetneq\lb{T_{i+1}}$ for each $i=1, 2, \cdots, \rho-1$, if $T$ is acute;
        \item $\ub{T_{i+1}}\varsubsetneq\lb{T_i}$ for each $i=1, 2, \cdots, \rho-1$, if $T$ is obtuse.
    \end{enumerate}
    
\begin{figure}[ht]
\captionsetup[subfigure]{labelformat=empty}
      \centering
	   \begin{subfigure}{0.3\linewidth}
		\includegraphics[width=\linewidth]{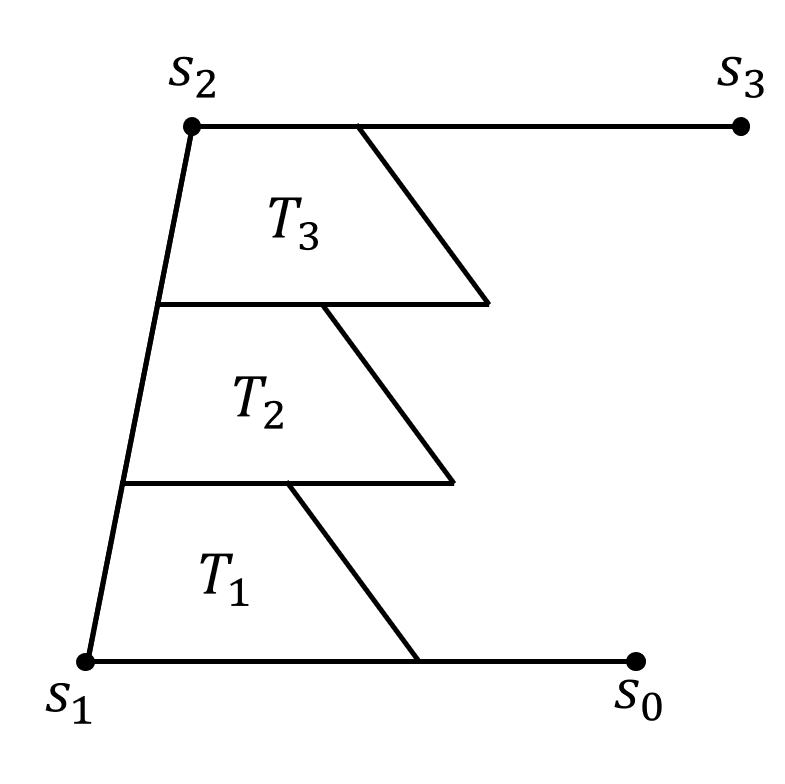}
		\caption{$T$ is acute}
		\label{fig:L1.1a}
	   \end{subfigure}
	   \begin{subfigure}{0.3\linewidth}
		\includegraphics[width=\linewidth]{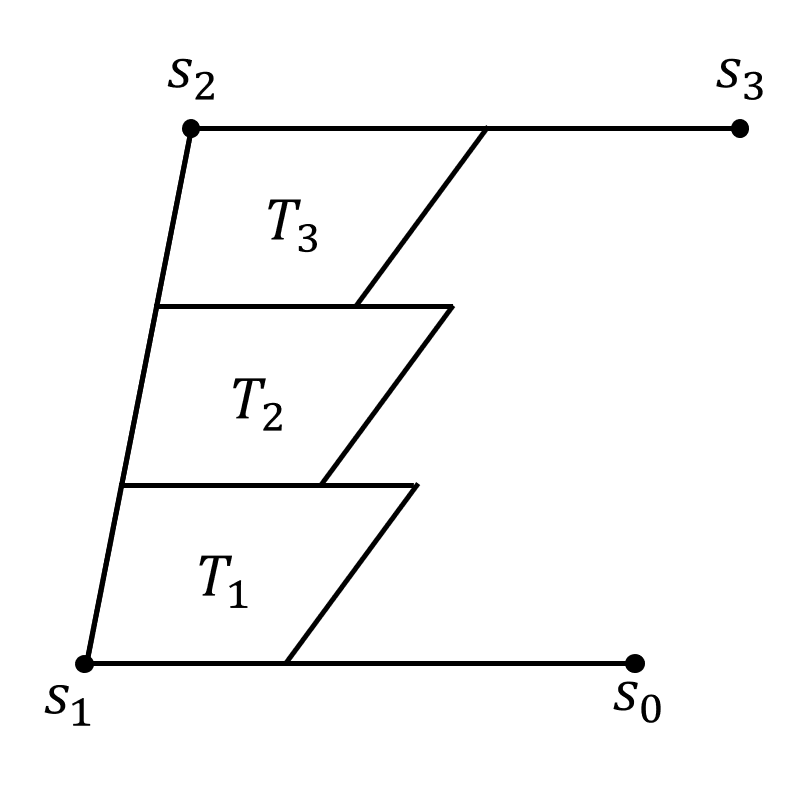}
		\caption{$T$ is obtuse}
		\label{fig:L1.1o}
	    \end{subfigure}
     	   \begin{subfigure}{0.3\linewidth}
		\includegraphics[width=\linewidth]{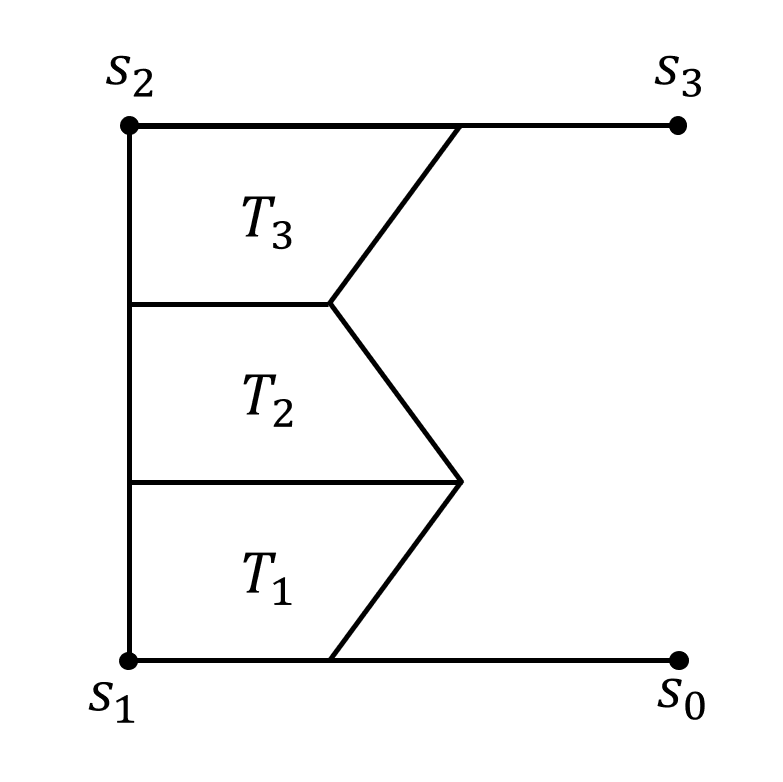}
		\caption{$T$ is a right traepzoid}
		\label{fig:L1.1r}
	    \end{subfigure}
	\caption{$S\left(T,3,\alpha\right)$ and three tiles $T_1,T_2,$ and $T_3$}
	\label{fig:L1.1}
\end{figure}

\end{lemma}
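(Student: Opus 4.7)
The plan is to mirror the proof of Lemma~\ref{L1}, with STS replacing MTM and the angle $\psi(T)$ playing the role that $\theta(T)$ played there. Since $\ov{s_1}{s_2}$ lies on the boundary of $S\left(T,\rho,\alpha\right)$, it is perfectly covered from the parallelogram side by edges of tiles in $\mathfrak{T}$; since $S\left(T,\rho,\alpha\right)$ meets the template for $S^{\circ}\left(T,\rho,\alpha\right)$, STS forbids any $\lb{\te}$ among these covering edges, so the candidates reduce to $\ssl{\te}$, $\ml{\te}$, and $\ub{\te}$. I would let $T_1\in\mathfrak{T}$ be the tile whose internal angle at $s_1$ is adjacent to $\ov{s_1}{s_2}$, so that some edge of $T_1$ is the first covering edge of $\ov{s_1}{s_2}$ starting at $s_1$.

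The heart of the argument is pinning down the configuration of $T_1$. Since $T$ is not isosceles, $\theta(T)<\psi(T)$, so the only internal angles of $T$ not exceeding the corner angle $\psi(T)$ at $s_1$ are $\theta(T)$ and $\psi(T)$. I would rule out every configuration other than $\ssl{T_1}$ lying on $\ov{s_1}{s_2}$ by a uniform terminal-angle argument at $s_2$. The forbidden configurations are: (a) $T_1$'s angle at $s_1$ is $\theta(T)$ with $\ml{T_1}$ on $\ov{s_1}{s_2}$ (the alternative $\lb{T_1}$ is blocked by STS), or (b) $T_1$'s angle at $s_1$ is $\psi(T)$ with $\ub{T_1}$ on $\ov{s_1}{s_2}$ (possible only when $T$ is obtuse or a right trapezoid, since for acute $T$ both angles incident to $\ub{T}$ strictly exceed $\psi(T)$; the remaining alternative $\lb{T_1}$ on $\ov{s_1}{s_2}$ is blocked by STS). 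In each such configuration, the far endpoint of $T_1$'s edge along $\ov{s_1}{s_2}$ carries an interior angle of $T_1$ equal to $\pi-\theta(T)$, leaving only $\theta(T)$ of the polygon-interior angle there; STS then forces the next covering edge to be an $\ml{\te}$ with near angle $\theta(T)$ and far angle $\pi-\theta(T)$. Iterating, every subsequent covering edge along $\ov{s_1}{s_2}$ is an $\ml{\te}$ of identical orientation, so the last tile abutting $s_2$ carries angle $\pi-\theta(T)$ at $s_2$. But the polygon-interior angle at $s_2$ equals $\pi-\psi(T)$ (or $\pi/2$ in the right case), and from $\theta(T)<\psi(T)\leq\pi/2$ we get $\pi-\theta(T)>\pi-\psi(T)$, a contradiction.

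Once $T_1$ is identified with $\ssl{T_1}$ on $\ov{s_1}{s_2}$ and angle $\psi(T)$ at $s_1$, the other edge of $T_1$ at $s_1$ is $E(T_1)$ (for non-right $T$) or one of $\lb{T_1}, \ub{T_1}$ (for right $T$), lying along $\ov{s_1}{s_0}$; this yields properties (2) and (4), with the containment direction in (2) determined by whether $|E(T_1)|\leq\alpha$. At the endpoint $q_1$ of $\ssl{T_1}$ on $\ov{s_1}{s_2}$, the angle of $T_1$ is $\pi-\psi(T)$, leaving exactly $\psi(T)$ of interior angle, so the identical argument locates $T_2$ with $\ssl{T_2}$ covering the next segment of length $h$; induction on $\rho$ then produces $T_3,\ldots,T_\rho$ with property (1), and the symmetric analysis at $s_2$ gives property (3). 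Finally, properties (5) and (6) follow by comparing $F(T_i)$ and $E(T_{i+1})$ at each junction $q_i$: both emanate from $q_i$ along a common line parallel to $\ov{s_0}{s_1}$, and since $a<b$ the shorter one (a copy of $\ub{T}$) is strictly contained in the longer one (a copy of $\lb{T}$).

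The main obstacle is the terminal-angle argument at $s_2$: one must propagate the forced chain of $\ml{\te}$'s uniformly through the acute, obtuse, and right-trapezoid cases and check that no alternative orientation at any junction evades the contradiction. The right-trapezoid case is the tightest, since $\pi-\psi(T)=\pi/2$ there and the sole source of contradiction is the margin $\psi(T)-\theta(T)=\pi/2-\theta(T)$; some care is also needed to ensure that the STS hypothesis, defined with $\psi(T)=\pi/2$ for right trapezoids, supplies the correct input when the setup is $S\left(T,\rho,\alpha\right)$ rather than the triangle-only template $S^{\circ}\left(T,\rho,\alpha\right)$.
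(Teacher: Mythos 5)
Your proposal is correct and follows essentially the same route as the paper: STS rules out lower bases on $\ov{s_1}{s_2}$, a forced chain of main legs (each $\ub{\te}\cap\ml{\te}$ vertex leaving only a $\theta(\te)$ to fill, which by STS forces another main leg) rules out main legs and hence upper bases, so only subsidiary legs remain, and the corner/junction angles together with $b>a$ yield properties (1)--(6). The only cosmetic difference is that the paper excludes $\ml{\te}$ and $\ub{\te}$ from the whole segment first and phrases the chain's failure as an uncoverable leftover segment, whereas you enumerate configurations corner-by-corner and phrase it as an angle mismatch at $s_2$; these are equivalent.
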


\begin{proof}
    We first show that no $\ml{\te}$ can lie on $\ov{s_1}{s_2}$. Assume, for the sake of contradiction, that some tile $T'\in\mathfrak{T}$ has its main leg lying on $\ov{s_1}{s_2}$. Since both the internal angle $\angle s_0s_1s_2$ and $\angle s_1s_2s_3$ of $S(T,\rho,\alpha)$ have the size smaller than $\pi-\theta\left(T'\right)$, neither $s_1$ nor $s_2$ is contained in $\ub{T'}\cap\ml{T'}$. Thus, a single $\theta\left(\te\right)$ fills the acute angle between $\ub{T'}$ and $\ov{s_1}{s_2}$; let $T''\in\mathfrak{T}$ be the tile whose main acute angle fills that acute angle. Then, either $\lb{T''}$ or $\ml{T''}$ lies on $\ov{s_1}{s_2}$. Considering that $T$ satisfies STS, we have $\ml{T''}\subset\ov{s_1}{s_2}$. Similar to $T'$, this gives us the acute angle between $\ub{T''}$ and $\ov{s_1}{s_2}$ which is filled with a single $\theta\left(\te\right)$. Repeating this process as much as possible, we obtain a closed line segment that lies on $\ov{s_1}{s_2}$ and no edges of tiles in $\mathfrak{T}$ can cover, which is a contradiction. Therefore, $\ml{\te}$ cannot lie on $\ov{s_1}{s_2}$.

    Next, we claim that $\ub{\te}$ cannot lie on $\ov{s_1}{s_2}$. Assume that the upper base of some tile $T'''\in\mathfrak{T}$ in $\mathfrak{T}$ lies on $\ov{s_1}{s_2}$. Then, from the argument for $T'$, it follows that the acute angle between $\ml{T'''}$ and $\ov{s_1}{s_2}$ is filled with a single $\theta\left(\te\right)$. However, this implies that at least one of $\lb{\te}$ and $\ml{\te}$ lies on $\ov{s_1}{s_2}$, which is a contradiction. Hence, no $\ub{\te}$ can lie on $\ov{s_1}{s_2}$.

    Now, we prove the problem statement. From the previous discussion and the assumption that $T$ satisfies STS, it follows that no edges of tiles in $\mathfrak{T}$ other than $\ssl{\te}$ can lie on $\ov{s_1}{s_2}$. This immediately shows that the problem statement holds when $T$ is a right trapezoid and when $\rho=1$. Thus, we may assume $T$ is not a right trapezoid and $\rho\geq 2$. The acute angle $\psi(T)$ is the second smallest internal angle of $T$, so the angle $\angle s_0s_1s_2$ is filled with one or more $\theta\left(\te\right)$'s or a single $\psi\left(\te\right)$. Considering that only $\ssl{\te}$ can lie on $\ov{s_1}{s_2}$, it follows that there is a tile $T_1\in\mathfrak{T}$ such that $\ssl{T_1}\subset\ov{s_1}{s_2}$ and $\psi\left(T_1\right)$ fills the internal angle $\angle s_0s_1s_2$ of $S(T,\rho,\alpha)$. Here, we have $\ov{s_0}{s_1}\subset E\left(T_1\right)$ if $\alpha\leq\left|E\left(T\right)\right|$, and vice versa otherwise. Similar to $T_1$, we can select a tile $T_2\in\mathfrak{T}$ such that $\psi\left(T_2\right)$ fills the acute angle of size $\psi(T)$ between $\ov{s_1}{s_2}$ and $F\left(T_1\right)$, the edge $\ssl{T_2}$ lies on $\ov{s_1}{s_2}$, and $\ssl{T_1}\cup\ssl{T_2}$ is a one-point set. Moreover, since $b>a$, we have $\ub{T_1}\varsubsetneq\lb{T_2}$ if $T$ is acute, and $\ub{T_2}\varsubsetneq\lb{T_1}$ if $T$ is obtuse. Inductively, we can select tiles $T_2, \cdots, T_{\rho}\in\mathfrak{T}$, along with $T_1$, satisfying the desired properties. In addition, we have $\ov{s_2}{s_3}\subset F\left(T_{\rho}\right)$ if $\alpha\leq\left|F\left(T\right)\right|$, and vice versa otherwise. This ends the proof.
\end{proof}

Given a positive integer $\rho$ and a positive real number $\alpha$, let $\hat{S}\left(T,\rho,\alpha\right)$ be the trapezoid $t_1t_2t_3t_4$, where
    \begin{align*}
        t_1&=(0,0),\\
        t_2&=\left(\rho h\cos{\psi(T)},\rho h\sin{\psi(T)}\right),\\
        t_3&=\left(f(T)+\alpha,\rho h\sin{\psi(T)}\right)\\
        t_4&=\left(g(T)+\alpha,0\right)
    \end{align*}
    are points in $\mathds{R}^2$ and $f(T)$ and $g(T)$ are defined as follows.
   \begin{align*}
       f(T)&=\begin{cases}
        \rho\cos\theta(T) &\text{if $T$ is obtuse}\\   
        \rho h\cos{\psi(T)} &\text{otherwise}
       \end{cases},\\
       g(T)&=\begin{cases}
        0 &\text{if $T$ is obtuse}\\  
        \rho \left(h\cos{\psi(T)}+\cos\theta(T)\right) &\text{otherwise}  
       \end{cases}.\\
   \end{align*}
As an analog of MTM, if we further assume an additional property for a tiling $\mathfrak{T}$, then we can construct a cluster parallelogram; we will derive a result analogous to Corollary \ref{ENDM} based on such a construction. As MTM was closely related to the non-existence of sub-to-sub tilings of $\hat{M}(T,\rho,\alpha)$ with congruent copies of $T$, STS deals with the non-existence of certain tilings of trapezoids $\hat{S}\left(T,\rho,\alpha\right)$ with congruent copies of $T$. The two following lemmas show that we can always construct the desired cluster parallelogram in two cases: $T$ is not a $\pi/3$-right trapezoid and a given tiling is main-to-main, or $T$ is a $\pi/3$-right trapezoid and a given tiling is strictly main-to-main.

\begin{lemma}\label{L1.1.1.1}
   Let $\rho$ be a positive integer, and $\alpha$ be a positive real number such that $\alpha>a$. Suppose that $T$ is neither a right trapezoid nor an isosceles trapezoid and has the sub-to-sub property. If $\mathfrak{T}$ is a main-to-main tiling of $\hat{S}\left(T,\rho,\alpha\right)$ with congruent copies of $T$, then the parallelogram $t_1t_2t_2't_1'$ is a cluster in $\mathfrak{T}$, where $t'_i=t_i+\left(a+b,0\right)$ for each $i=1, 2$.
\end{lemma}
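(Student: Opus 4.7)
The plan is to mirror the proof of Lemma \ref{L1.1}, interchanging the roles of main and subsidiary legs along with the corresponding properties MTM and STS. First, I will apply Lemma \ref{L1.1.1} taking $S(T,\rho,\alpha)$ to be $\hat{S}(T,\rho,\alpha)$: the polygonal chain $[t_4, t_1, t_2, t_3]$ lies on the boundary of $\hat{S}$, and a short coordinate check (using $\alpha > a$, the identity $\sin\theta(T) = h\sin\psi(T)$, and the trapezoid identity $b = a + \cos\theta(T) \pm h\cos\psi(T)$, treated separately for the acute and obtuse cases) shows that the region of $\hat{S}$ contains the parallelogram required in Lemma \ref{L1.1.1}. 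This produces tiles $T_1, \ldots, T_\rho \in \mathfrak{T}$ whose subsidiary legs perfectly cover $\ov{t_1}{t_2}$ in the specified order.

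For the base case $\rho = 1$, I will focus on the endpoint of $\ml{T_1}$ at which the angle on the side opposite $T_1$ equals $\theta(T)$: this is the upper endpoint (on the top edge of $\hat{S}$) when $T$ is acute, and the lower endpoint (on the bottom edge) when $T$ is obtuse. By Lemma \ref{L1.1.1}(2)--(3) together with $\alpha > a$, this endpoint lies in the relative interior of the corresponding boundary edge of $\hat{S}$. Since $\theta(T)$ is the smallest internal angle of $T$, the only way to fill an angle of size $\theta(T)$ by tile corner angles is a single $\theta$-corner of some tile $T_1' \in \mathfrak{T}$; the two edges of $T_1'$ meeting at this corner are $\ml{T_1'}$ and $\lb{T_1'}$. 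The main-to-main hypothesis on $\mathfrak{T}$ forbids $\lb{T_1'}$ from lying on $\ml{T_1}$, so $\ml{T_1'}$ lies on $\ml{T_1}$, and equal lengths $|\ml{T_1}|=1=|\ml{T_1'}|$ force $\ml{T_1'} = \ml{T_1}$. A direct coordinate computation then shows $T_1 \cup T_1'$ is the parallelogram $t_1 t_2 t_2' t_1'$.

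For $\rho \geq 2$, I will proceed by induction. Once $T_1', \ldots, T_{i-1}'$ have been found so that each $T_j \cup T_j'$ for $j<i$ is an $(a+b) \times h$ parallelogram, the top side of layer $i-1$ (namely $\ub{T_{i-1}} \cup \lb{T_{i-1}'}$ in the acute case or $\lb{T_{i-1}} \cup \ub{T_{i-1}'}$ in the obtuse case) has length $a+b>b$ and contains in its relative interior one endpoint of $\ml{T_i}$. The other endpoint of $\ml{T_i}$ lies in the relative interior of $\lb{T_{i+1}}$ when $i<\rho$ (using $b>a$ and Lemma \ref{L1.1.1}(5)--(6)) or of $\ov{t_2}{t_3}$ when $i=\rho$ (using $\alpha>a$). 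The base-case single-$\theta$-corner argument then applies verbatim to produce $T_i'$, and after $\rho$ iterations the union $T_1 \cup T_1' \cup \cdots \cup T_\rho \cup T_\rho'$ realizes the cluster parallelogram $t_1 t_2 t_2' t_1'$.

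The argument is cleaner than its counterpart in Lemma \ref{L1.1} because $\theta(T)$, being the smallest internal angle of $T$, admits only a single decomposition into tile corner angles, which eliminates the $\Theta_1 \lessgtr \pi/2$ case split and the $h<1$ technicality appearing in Lemma \ref{L1.1}. The main obstacle is therefore the geometric bookkeeping: verifying the parallelogram-in-$\hat{S}$ containment in both the acute and obtuse cases, confirming that the single-$\theta$-corner tile $T_1'$ is oriented so that $T_1 \cup T_1'$ coincides with the claimed parallelogram, and tracking the endpoints of $\ml{T_i}$ through the induction.
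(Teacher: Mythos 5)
Your proposal is correct and follows essentially the same route as the paper: apply Lemma \ref{L1.1.1} to obtain $T_1,\dots,T_\rho$, then observe that the residual angle of size $\theta(T)$ at one endpoint of each $\ml{T_i}$ must be filled by a single $\theta$-corner of some tile, with the main-to-main hypothesis forcing the edge of that tile lying along $\ml{T_i}$ to be its main leg, and induct over the layers. The only slip is the parenthetical claim that the other endpoint of $\ml{T_i}$ lies in the relative interior of $\lb{T_{i+1}}$ --- in the obtuse case the relevant adjacency is $\ub{T_{i+1}}\varsubsetneq\lb{T_i}$, which is why the paper runs the obtuse induction from $i=\rho$ down to $i=1$ --- but since your gap-angle argument is anchored at the correctly identified endpoint (upper for acute, lower for obtuse), this does not affect the proof.
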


\begin{proof}
   Let $\mathfrak{T}$ be a main-to-main tiling of the trapezoid $\hat{S}\left(T,\rho,\alpha\right)$ with congruent copies of $T$, and let $T_1, T_2, \cdots, T_{\rho}\in\mathfrak{T}$ be tiles satisfying the five properties in Lemma \ref{L1.1.1} with $s_0,s_1,s_2,$ and $s_3$ replaced to $t_1,t_2,t_3,$ and $t_4$, respectively. Note that no $T_i$ intersects with $\ov{t_3}{t_4}$. We first prove the problem statement for the case when $T$ is acute. If $\rho=1$, then the two line segments $\ov{t_1}{t_4}$ and $\ov{t_2}{t_3}$ both contain an endpoint of $\ml{T_1}$ in their interior. Since the acute angle between $\ml{T_1}$ and $\ov{t_2}{t_3}$ is of size $\theta(T)$, and since $\mathfrak{T}$ is main-to-main, the main leg of another tile $T'\in\mathfrak{T}$ lies on $\ml{T_1}$. This gives us the cluster parallelogram $t_1t_2t_2't_1'$ tiled with $T_1$ and $T'$.

   Next, suppose that $\rho\geq 2$. Similar to the case $\rho=1$, the interior of $\lb{T_2}$ contains an endpoint of $\ml{T_1}$, and thus the main leg of another tile $T_1'\in\mathfrak{T}$ perfectly covers $\ml{T_1}$. As we did in the proof of Lemma \ref{L1.1}, due to the fifth property of $T_i$'s in Lemma \ref{L1.1.1}, we then can inductively select tiles $T_2',T_3',\cdots,T_{\rho}'\in\mathfrak{T}$, along with $T_1'$, such that $T_i'\neq T_i$ and $\ml{T_i'}=\ml{T_i}$ for each $i=1, 2, \cdots, \rho$, and 
   $$
   \lb{T_{j+1}}\cup\ub{T_{j+1}'}=\ub{T_j}\cup\lb{T_j'}
   $$
   for each $j=1, 2, \cdots, \rho-1$. By this, we obtain the parallelogram $t_1t_2t_2't_1'$ which is a cluster in $\mathfrak{T}$ tiled with $T_1,T_2,\cdots,T_{\rho}$,$T_1',T_2'\cdots,T_{\rho}'$; the following figure illustrates the cluster when $\rho=3$.

   \begin{figure}[H]
    \centering
    \includegraphics[width=0.5\linewidth]{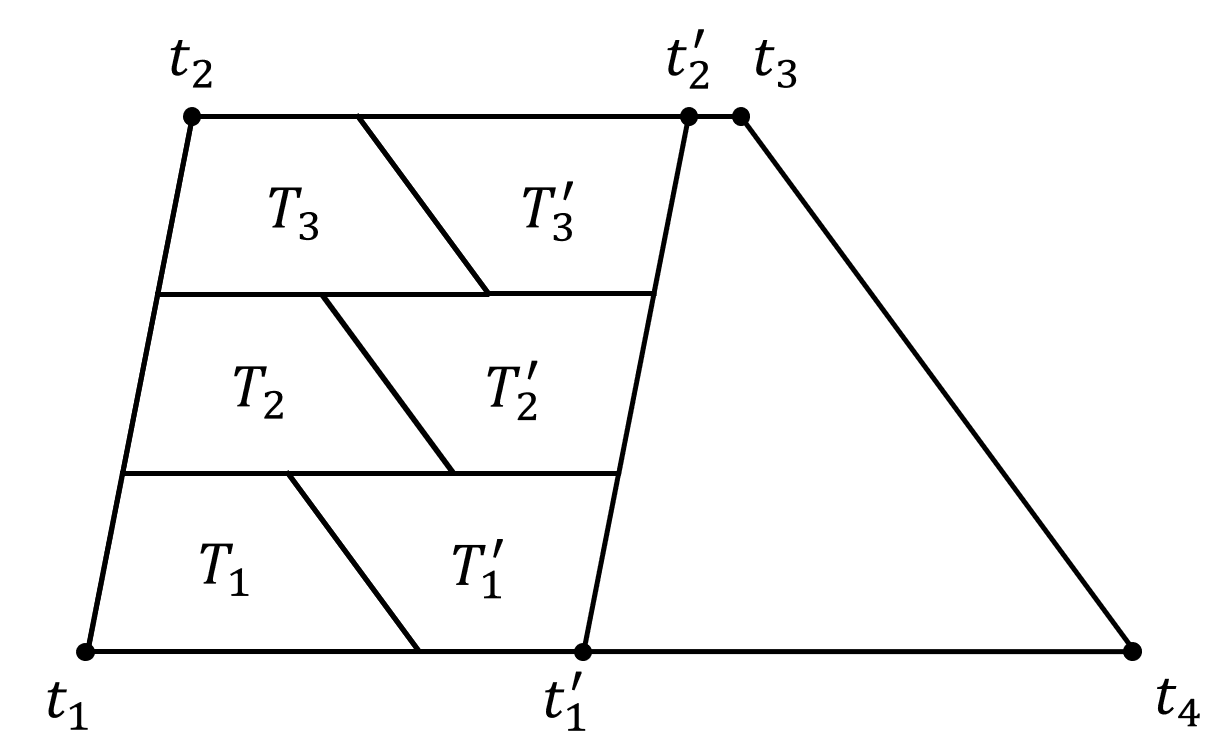}
    \caption{Cluster $t_1t_2t_2't_1'$ when $\rho=3$ and $T$ is acute}
    \label{fig:L1.1.1.1}
\end{figure}

   Now, we deal with the case when $T$ is obtuse. The situation is almost the same as in the preceding case, but there is a slight difference. First of all, $\ov{t_2}{t_3}$ and $\ov{t_1}{t_4}$ properly contain $\lb{T_{\rho}}$ and $\ub{T_1}$, respectively. Moreover, when $\rho\geq 2$, it follows from the sixth property of $T_i$'s in Lemma \ref{L1.1.1} that for each $i=1, 2, \cdots, \rho-1$, one endpoint of $\ml{T_{i+1}}$ is contained in the interior of $\lb{T_i}$ and the acute angle between $\ml{T_{i+1}}$ and $\lb{T_i}$ has the same size as $\theta(T)$. Thus, unlike the preceding case where the inductive selection of $T_1',T_2',\cdots,T_{\rho}'$ was done from $i=1$ to $i=\rho$, we inductively select analogous tiles from $i=\rho$ to $i=1$ when $T$ is obtuse. We then obtain tiles $T_1'',T_2'',\cdots,T_{\rho}''\in\mathfrak{T}$ such that $T_i''\neq T_{\rho+1-i}$ and $\ml{T_i''}=\ml{T_{\rho+1-i}}$ for each $i=1, 2, \cdots, \rho$, and 
   $$
   \lb{T_{\rho-j}}\cup\ub{T_{j+1}'}\cup=\ub{T_{\rho+1-j}}\cup\lb{T_j'}
   $$
   for each $j=1, 2, \cdots, \rho-1$. This gives us the desired cluster parallelogram $t_1t_2t_2't_1'$ tiled with $T_1,T_2,\cdots,T_{\rho}$, $T_1'',T_2'',\cdots,T_{\rho}''$; see Figure \ref{fig:L1.1.1.1-1}, for instance. This ends the proof.
      \begin{figure}[H]
    \centering
    \includegraphics[width=0.5\linewidth]{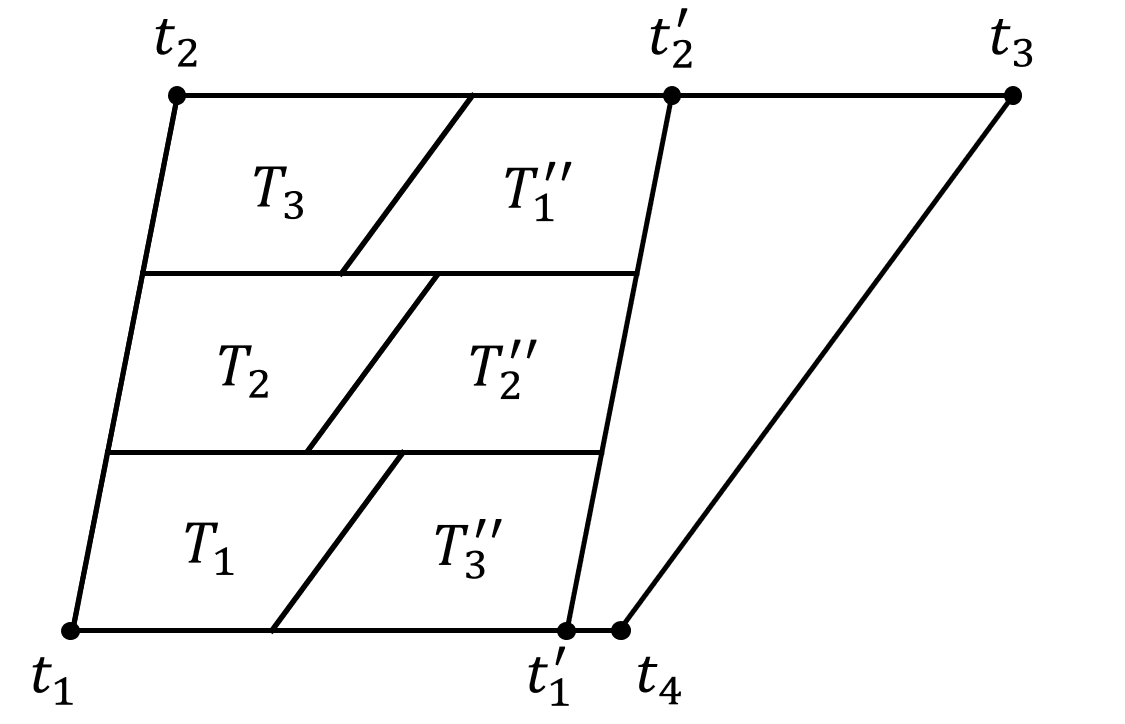}
    \caption{Cluster $t_1t_2t_2't_1'$ when $\rho=3$ and $T$ is obtuse}
    \label{fig:L1.1.1.1-1}
\end{figure}
\end{proof}

\begin{lemma}\label{L1.1.1.2}
    Let $\rho$ be a positive integer, and $\alpha$ be a positive number such that $\alpha>a$. Suppose that $T$ is a right trapezoid and satisfies a sub-to-sub property. Given a tiling $\mathfrak{T}$ of $\hat{S}\left(T,\rho,\alpha\right)$, the rectangle $t_1t_2t_2't_1'$ is a cluster in $\mathfrak{T}$, where $t_i'=t_i+(a+b,0)$ for each $i=1,2$, if one of the following holds.
    \begin{enumerate}
        \item $\theta(T)\neq\pi/3$ and $\mathfrak{T}$ is main-to-main;
        \item $\theta(T)=\pi/3$ and $\mathfrak{T}$ is strictly main-to-main.
    \end{enumerate}
    
\end{lemma}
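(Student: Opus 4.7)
The plan is to mirror the proof of Lemma \ref{L1.1.1.1}, suitably adapted for the right-angle case $\psi(T) = \pi/2$. First I would apply Lemma \ref{L1.1.1} to extract tiles $T_1, T_2, \ldots, T_\rho \in \mathfrak{T}$ whose subsidiary legs perfectly cover $\ov{t_1}{t_2}$ in the order $(\ssl{T_1}, \ldots, \ssl{T_\rho})$, with $t_1 \in T_1$ and $t_2 \in T_\rho$. Because $T$ is a right trapezoid, each $T_i$ is an axis-aligned right trapezoid sitting to the right of the vertical segment $\ov{t_1}{t_2}$, with horizontal bases and a slanted main leg of length $1$.

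The proof then reduces to showing that, for each $i$, the main leg $\ml{T_i}$ is perfectly covered by the main leg of a single other tile $T_i' \in \mathfrak{T}$. Once this is established, each pair $T_i \cup T_i'$ fills a rectangle of dimensions $(a+b) \times h$---with $T_i'$ placed so that its main leg coincides with $\ml{T_i}$ and its subsidiary leg is vertical on the line $x = a+b$---and the $\rho$ such rectangles stack vertically to form exactly the desired cluster rectangle $t_1 t_2 t_2' t_1'$.

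For the reduction, I would analyze the edges of tiles lying on $\ml{T_i}$ from the side opposite $T_i$. The main-to-main hypothesis excludes $\lb{\te}$ from the cover, so the only candidate edge types are $\ml{\te}, \ub{\te}, \ssl{\te}$, of lengths $1, a, h$ respectively. The external angles to fill at the two endpoints of $\ml{T_i}$ are $\pi - \theta$ (at the vertex shared with $\lb{T_i}$, lying on the lower boundary of $\hat{S}(T,\rho,\alpha)$ when $i = 1$) and $\theta$ (at the vertex shared with $\ub{T_i}$). Combined with the fact that internal angles of $T$ lie in $\{\theta, \pi/2, \pi - \theta\}$ and the angle-sum constraint $\pi$ at each interior junction along $\ml{T_i}$, a careful case check---splitting on the position of $\theta$ relative to $\pi/3$, $\pi/4$, and other rational multiples of $\pi$ of the form $\pi/(2k)$ for which $\pi/2-\theta$ decomposes into copies of $\theta$---forces the cover to consist of a single $\ml{T_i'}$ whenever $\theta \neq \pi/3$.

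The main obstacle is the exceptional case $\theta = \pi/3$: then $\pi - \theta = 2\theta$, so two $\ub{\te}$-edges placed end-to-end along $\ml{T_i}$, each presenting its $\theta$-vertex toward an endpoint of $\ml{T_i}$, satisfy the angle constraints at both endpoints and at their midpoint junction, and (when $a = 1/2$) also the length constraint $2a = 1$. Such a cover involves no $\lb{\te}$, so it is not ruled out by main-to-main; this is precisely why case 2 imposes the stronger strictly-main-to-main hypothesis, which forbids $\ub{\te}$ from the cover entirely and reinstates the single-$\ml{\te}$ conclusion. Once that conclusion holds in both cases, the rectangle cluster $t_1 t_2 t_2' t_1'$ assembles exactly as described.
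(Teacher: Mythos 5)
Your high-level skeleton matches the paper's: apply Lemma \ref{L1.1.1} to get $T_1,\dots,T_\rho$ with subsidiary legs stacked along $\ov{t_1}{t_2}$, reduce to showing each $\ml{T_i}$ is perfectly covered by the main leg of a single tile with parallel bases, and assemble the $(a+b)\times h$ rectangles. But the reduction is where all the work lives, and your sketch of it has a genuine gap. The tiles $T_i$ are axis-aligned, but each one can be \emph{positive or negative} (upper base on top or on the bottom), and you never engage with this. When $T_i$ is positive and $T_{i+1}$ is negative, their main legs meet at a point of the line $y=ih$ and enclose an angle of size $2\theta(T)$; the entire difficulty of the lemma is deciding how that angle can be filled. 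This is why $\theta(T)=\pi/3$ is exceptional: then $2\theta(T)=\pi-\theta(T)$, so a \emph{single obtuse internal angle} of some tile $T'$ can fill it, placing $\ml{T'}$ along $\ml{T_i}$ and $\ub{T'}$ along $\ml{T_{i+1}}$ --- a cover of $\ml{T_i}$ by a main leg of a tile whose bases are \emph{not} parallel to those of $T_i$, which destroys the rectangle. Your stated reason for the exception --- two $\ub{\te}$'s laid end-to-end, ``each presenting its $\theta$-vertex toward an endpoint of $\ml{T_i}$'' --- is geometrically wrong: the upper base of a right trapezoid is flanked by angles $\pi/2$ and $\pi-\theta(T)$, never by $\theta(T)$, so that configuration does not satisfy the angle constraints you invoke.

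Consequently the role of the strictly-main-to-main hypothesis is also misdescribed. It does not ``forbid $\ub{\te}$ from the cover and reinstate the single-$\ml{\te}$ conclusion'' in one step; in the paper it only shows that in the bad $\pi/3$ configuration the next main leg $\ml{T_{i+1}}$ must be \emph{properly} (not perfectly) covered, and the contradiction is obtained by propagating this proper/perfect alternation up the stack (the paper's Claims 1--4) until it collides with the boundary behaviour at $T_\rho$. Your proposal also asserts that for $\theta(T)\neq\pi/3$ ``a careful case check forces'' the desired cover without performing it; that check is nontrivial and requires separate handling of $\theta(T)=\pi/4$ (where the inter-leg angle $2\theta(T)$ equals $\pi/2$ and right internal angles, hence $\ub{\te}$'s and $\ssl{\te}$'s, become candidates, ruled out in the paper by an irrationality-of-$h$ argument). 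As written, the proposal identifies the correct destination but omits the orientation bookkeeping and the propagation argument that actually get there, and misattributes the source of the $\pi/3$ obstruction.
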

\begin{proof}
    Let $\mathfrak{T}$ be a main-to-main tiling of $\hat{S}\left(T,\rho,\alpha\right)$. By Lemma \ref{L1.1.1}, we can select tiles $T_1,T_2,\cdots,T_{\rho}\in\mathfrak{T}$ such that subsidiary legs perfectly cover $\ov{t_1}{t_2}$ in the order
    $$
    \left(\ssl{T_1},\ssl{T_2},\cdots,\ssl{T_{\rho}}\right).
    $$
    and two points $t_1$ and $t_2$ are contained in $T_1$ and $T_{\rho}$, respectively. It suffices to show that each $\ml{T_i}$ is perfectly covered by the main leg of another tile in $\mathfrak{T}$ whose bases are parallel to those of $T_i$. If $\rho=1$, then there is a base of $\hat{S}\left(T,1,\alpha\right)$ such that one endpoint of $\ml{T_1}$ is contained in the interior of that base and the acute angle between $\ml{T_1}$ and the base, not an internal angle of $T$, has the same size as $\theta(T)$. Since $\mathfrak{T}$ is main-to-main, $\ml{T_1}$ is perfectly covered by the main leg of another tile in $\mathfrak{T}$. Hence, the problem statement holds when $\rho=1$.

    Next, assume that $\rho\geq 2$. Choose an integer $i=2, 3, \cdots, \rho$. We prove the following four claims.

    \begin{enumerate}
        \item Suppose that $T_{i-1}$ is negative and $\ml{T_{i-1}}$ is properly covered by edges of some tiles in $\mathfrak{T}$. Then, $T_i$ is positive. Moreover, if $i<\rho$, then $\ml{T_{i}}$ is perfectly covered by the main leg of another tile whose bases are not parallel to those of $T_i$;
        \item If $T_{\rho-1}$ is negative and $T_{\rho}$ is positive, then $\ml{T_{\rho-1}}$ is perfectly covered by edges of some tiles in $\mathfrak{T}$;
        \item If $\theta(T)\neq\pi/3$ and $T_{i-1}$ is positive, then $\ml{T_{i-1}}$ cannot be perfectly covered by the main leg of another tile whose bases are not parallel to those of $T_{i-1}$;
        \item Suppose that $\theta(T)=\pi/3$ and $\mathfrak{T}$ is strictly main-to-main. If $T_{i-1}$ is positive and $\ml{T_{i-1}}$ is perfectly covered by the main leg of another tile whose bases are not parallel to those of $T_{i-1}$, then $T_i$ is negative and $\ml{T_i}$ is properly covered by edges of some tiles in $\mathfrak{T}$.
    \end{enumerate}

    \paragraph{Claim 1:} Let $L$ be a closed line segment which is a union of some edges of tiles in $\mathfrak{T}$ properly covering $\ml{T_{i-1}}$. Note that the interior of $L$ contains the vertex of $T_{i-1}$ contained in $\ml{T_{i-1}}\cap\lb{T_{i-1}}$. If $T_i$ is negative, then the closure $M$ of $\lb{T_{i-1}}\backslash\ub{T_i}$ is a line segment of length $b-a=\cos\theta(T)$ shorter than $\ml{\te}$ and $\lb{\te}$. Thus, considering that the acute angle between $\ml{T_i}$ and $M$, either $\ml{\te}$ or $\lb{\te}$ properly contains the closed line segment $M$. This implies that $\ml{T_{i-1}}$ is perfectly covered by edges of some tiles in $\mathfrak{T}$ other than $T_{i-1}$, which is a contradiction because of $L$. Hence, $T_i$ is positive.
    
    Now, we show that $\ml{T_i}$ is perfectly covered in the desired way if $i<\rho$. Due to vacuous truth, we may assume $\rho\geq 3$. Regardless of the sign of $T_{i+1}$, $\ml{T_i}$ is perfectly covered by edges of some tiles in $\mathfrak{T}$ because of $L$ and some edge of $T_{i+1}$ (in particular, $\lb{T_{i+1}}$ if $T_{i+1}$ is positive, and $\ml{T_{i+1}}$ if it is negative). Moreover, since $\mathfrak{T}$ is main-to-main, $\lb{\te}$ cannot lie on $\ml{T_i}$. If $T_{i+1}$ is positive, then the acute angle between $\ml{T_i}$ and $\lb{T_{i+1}}$ is filled with a single $\theta\left(\te\right)$, and thus we can observe that $\ml{T_i}$ is perfectly covered by the main leg of some negative tile in $\mathfrak{T}$. However, this implies that $\ml{T_{i-1}}$ is perfectly covered by edges of some tiles in $\mathfrak{T}$ other than $T_{i-1}$, which is a contradiction. Thus, $T_{i+1}$ is negative.

      \begin{figure}[H]
    \centering
    \includegraphics[width=0.4\linewidth]{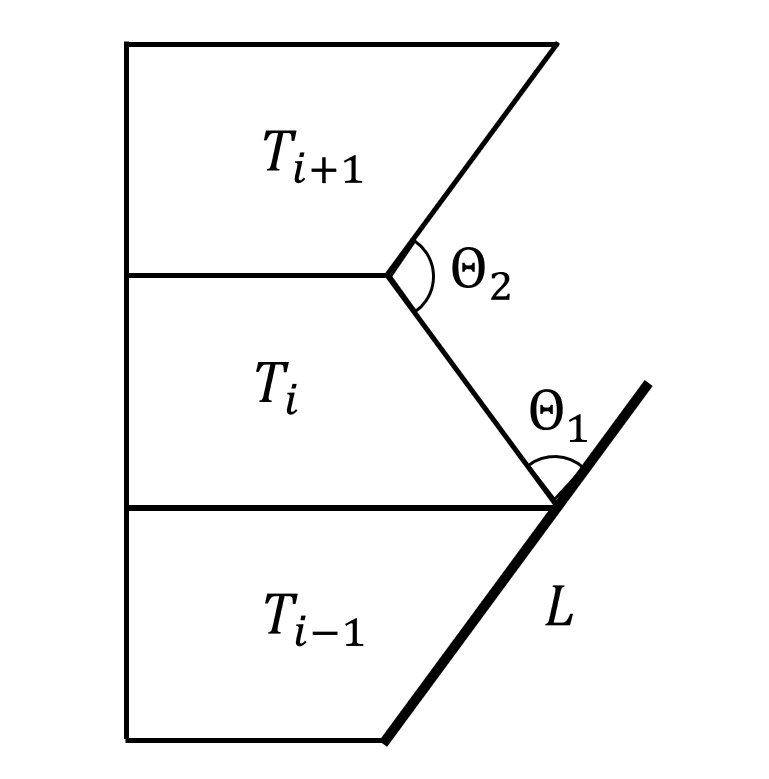}
    \caption{Two angles $\Theta_1$ and $\Theta_2$}
    \label{fig:L1.1.1.2-1}
\end{figure}
    
    Let $\Theta_1$ be the angle between $L$ and $\ml{T_i}$ which is not filled with $\theta\left(T_{i-1}\right)$ and $\theta\left(T_i\right)$, and $\Theta_2$ be the angle between $\ml{T_{i+1}}$ and $\ml{T_i}$ (see Figure \ref{fig:L1.1.1.2-1}). Since $\ml{T_{i+1}}$ and $L$ are parallel, either $\Theta_1$ or $\Theta_2$ is acute when $\theta(T)\neq\pi/4$. Then, considering that $\mathfrak{T}$ is main-to-main, $\ml{T_i}$ is perfectly covered by the main leg of some tile $T_i'\neq T_i$ in $\mathfrak{T}$ when $\theta(T)\neq\pi/4$. Here, $T_i'$ cannot be negative; otherwise, its interior intersects with $L$. Thus, we obtain the desired result. To show the same for the case $\theta(T)=\pi/4$, assume, for the sake of contradiction, that $\ml{T_i}$ is perfectly covered only with $\ssl{\te}$'s and $\ub{\te}$'s. Since $\Theta_1=\Theta_2=\pi/2$, either $\ml{\te}$ or $\lb{\te}$ lies on $\ml{T_i}$ as a proper subset of $\ml{T_i}$ if $\ub{\te}$ lies on $\ml{T_i}$. In addition, there are no integers $n$ satisfying $n\left|\ssl{T}\right|=\left|\ml{T}\right|$, for $\left|\ssl{T}\right|=1/\sqrt{2}$ is irrational while $\left|\ml{T}\right|=1$ is not. Hence, we derive a contradiction. For this reason, when $\theta(T)=\pi/4$, the edge $\ml{T_i}$ is perfectly covered in the desired way. Therefore, the first claim is true.

    \paragraph{Claim 2:} Assume, for the sake of contradiction, that $\ml{T_{\rho-1}}$ is properly covered by edges of some tiles in $\mathfrak{T}$. Then, $\ml{T_{\rho}}$ is perfectly covered by edges of some tiles in $\mathfrak{T}$ other than $T_{\rho}$. Since $\ov{t_2}{t_3}$ contains an endpoint of $\ml{T_{\rho}}$ in its interior, the acute angle between $\ov{t_2}{t_3}$ and $\ml{T_{\rho}}$ is filled with a single $\theta\left(\te\right)$. From the assumption that $\mathfrak{T}$ is main-to-main, it then follows that $\ml{T_{\rho}}$ is perfectly covered by the main leg of some negative tile in $\mathfrak{T}$. However, this implies that edges of some tiles in $\mathfrak{T}$ other than $T_{\rho-1}$ perfectly covers $\ml{T_{\rho-1}}$, which is a contradiction. Hence, the second claim holds.

    \paragraph{Claim 3:} This is clear if $T_i$ is positive. Thus, we assume that $T_i$ is negative. Let $\Theta$ be the angle between $\ml{T_{i-1}}$ and $\ml{T_i}$ not filled with two obtuse internal angles of $T_{i-1}$ and $T_i$ (see Figure \ref{fig:L1.1.1.2-2}). Since $\theta(T)\neq\pi/3$, and since $\theta(T)<\pi/2$, we have $\Theta\neq\pi-\theta(T)$ and
    $$
    \Theta=2\cdot\theta(T)<\pi<\left(\pi-\theta(T)\right)+\theta(T).
    $$
    Considering that $\theta(T)$ is the smallest internal angle of $T$, this implies that $\Theta$ cannot be filled with internal angles of tiles in $\mathfrak{T}$ including at least one obtuse internal angle of $\te$; that is, $\Theta$ is filled with two $\theta\left(\te\right)$'s. Therefore, we obtain the desired result.

          \begin{figure}[H]
    \centering
    \includegraphics[width=0.4\linewidth]{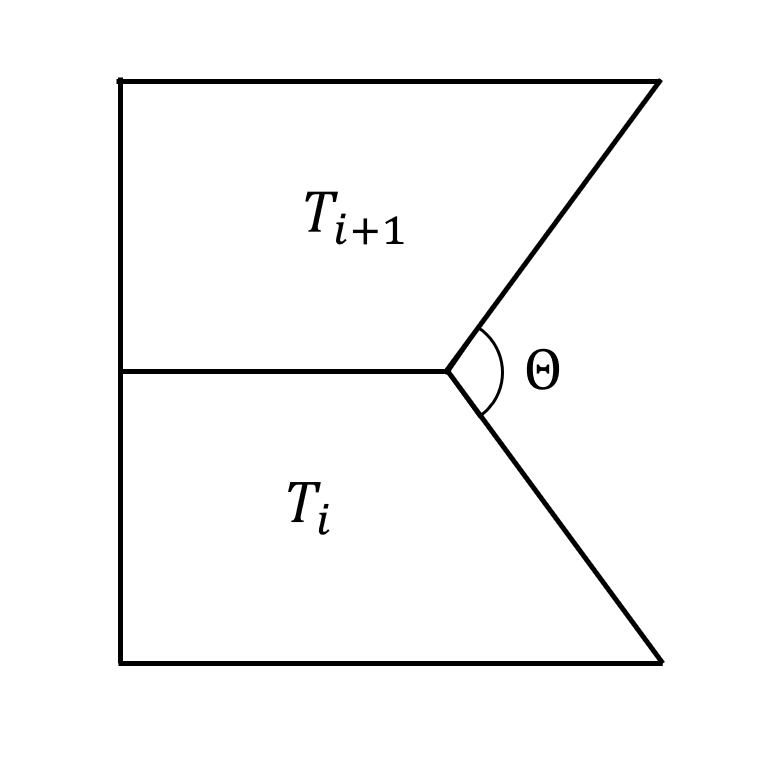}
    \caption{The angle $\Theta$ between $\ml{T_i}$ and $\ml{T_{i+1}}$}
    \label{fig:L1.1.1.2-2}
\end{figure}
    
    \paragraph{Claim 4:} Let $T'\in\mathfrak{T}$ be the tile, not $T_{i-1}$, whose main leg is perfectly covering $\ml{T_{i-1}}$ as assumed by the hypothesis. Then, $\ub{T_{i-1}}$ is perfectly covered by some base of $T_i$ because of $\ub{T'}$ and $\ov{s_1}{s_2}$. Thus, $T_i$ is negative. Moreover, $\ub{T'}$ lies on $\ml{T_i}$ or vice versa. Since $\mathfrak{T}$ is strictly main-to-main, $\ml{T_i}$ is properly covered by edges of some tiles in $\mathfrak{T}$ including $\ub{T'}$. Therefore, the fourth claim is true.

    Now, we prove the problem statement for $\rho\geq 2$. First, suppose that $T_1$ is negative, and assume that $\ml{T_1}$ is properly covered by edges of some tiles in $\mathfrak{T}$. If $\rho=2$, then Claim 1 and Claim 2 give us a contradiction. Thus, we further assume that $\rho\geq 3$. If $\theta(T)\neq\pi/3$, then we can derive a contradiction from Claim 1 and Claim 3. Moreover, if $\theta(T)=\pi/3$, and if $\mathfrak{T}$ is strictly main-to-main, then applying the results of Claim 1 and Claim 4 alternatively (and using the result of Claim 2 at last if $T_{\rho-1}$ is negative) leads to a contradiction. Therefore, the main leg of some tile $T_1'\neq T_1$ in $\mathfrak{T}$ perfectly covers $\ml{T_1}$. Considering the acute angle between $\ml{T_1}$ and $\ov{t_1}{t_4}$, we can observe that $T_1'$ is positive.

    Next, suppose that $T_1$ is positive. Due to $T_2$, the edge $\ml{T_1}$ cannot be properly contained by edges of some tiles in $\mathfrak{T}$. If $T_2$ is positive, then $\ml{T_1}$ is perfectly covered by the main leg of some negative tile in $\mathfrak{T}$. Thus, we may assume that $T_2$ is negative. Similar to $\Theta$, let $\Theta'$ be the angle between $\ml{T_1}$ and $\ml{T_2}$ not filled with the obtuse internal angles of $T_1$ and $T_2$. We assert that neither the obtuse internal angle nor a right internal angle of $\te$ can fill $\Theta'$ possibly along with other internal angles of $\te$. If this is the case, then $\Theta'$ is filled with two $\theta\left(\te\right)$'s, which implies that $\ml{T_1}$ is perfectly covered by the main leg of some negative tile in $\mathfrak{T}$. Consequently, we can conclude that regardless of the sign of $T_1$, the edge $\ml{T_1}$ is perfectly covered by the main leg of another tile in $\mathfrak{T}$ whose bases are parallel to those of $T_1$.
    
    The obtuse angle part follows from the proof of Claim 3 when $\theta(T)\neq\pi/3$. If $\theta(T)=\pi/3$ and $\mathfrak{T}$ is strictly main-to-main, then $\Theta'$ cannot be filled with a single obtuse internal angle of $\te$. Otherwise, either $\ml{T_1}$ or $\ml{T_2}$ is properly covered by edges of some tiles in $\mathfrak{T}$. The former possibility contradicts the fact that edges of some tiles in $\mathfrak{T}$ perfectly cover $\ml{T_1}$ because of $\ov{t_1}{t_4}$ and $\ml{T_2}$. We can also deduce a contradiction from the second possibility by applying to $T_2$ the argument for the case when $T_1$ is negative. Hence, the assertion regarding the obtuse angle is true.
    
    For the right angle part, observe that $\Theta'\neq\pi/2$ and
    $$
    \Theta'=2\cdot\theta(T)<\frac{\pi}{2}+\theta(T),
    $$
    if $\theta(T)\neq\pi/4$. Thus, the assertion regarding the right angle holds when $\theta(T)\neq\pi/4$. If $\theta(T)=\pi/4$, and if $\Theta'$ is filled with a single right internal angle of some $\te$, then there are only two possibilities: a single $\ub{\te}$ perfectly covers $\ml{T_1}$, or one $\ub{\te}$ and one $\ssl{\te}$ perfectly cover $\ml{T_1}$. For the former possibility, there is a tile $T''\in\mathfrak{T}$ such that $\ub{T''}=\ml{T_1}$ and $\ssl{T''}\subset\ml{T_2}$. Since every edge of $T$ is longer than the line segment $\ml{T_2}\backslash\ssl{T''}$, the edge $\ml{T_2}$ is properly covered by edges of some tiles in $\mathfrak{T}$. For the latter possibility, $\ml{T_2}$ is properly covered by edges of some tiles in $\mathfrak{T}$ including at least one of $\lb{\te}$ and $\ml{\te}$. Therefore, for either possibility, edges of some tiles in $\mathfrak{T}$ properly cover $\ml{T_2}$. If $\rho=2$, then this immediately turns out to be a contradiction. We can also derive a contradiction when $\rho\geq 3$ from Claim 1 and Claim 3 (or Claim 1 and Claim 2 if $\rho=3$). Therefore, the assertion regarding the right angle is also true.
    
    Choose an integer $k=1, 2, \cdots, \rho-1$, and suppose that for each $j=1, 2, \cdots, k$, the edge $\ml{T_j}$ is perfectly covered by the main leg of some tile $T_j'\neq T_j$ in $\mathfrak{T}$, where $T_j'$ is either positive or negative. Then, the closed line segment with endpoints $\left(0,kh\right)$ and $\left(a+b,kh\right)$ contains an endpoint of $\ml{T_{k+1}}$ in its interior and is a union of $\ub{T_k}$ and $\lb{T_k'}$ (if $T_k$ is positive) or $\lb{T_k}$ and $\ub{T_k'}$ (if $T_k$ is negative). Hence, we can apply the argument for $T_1$ directly to $T_{k+1}$ and conclude that $\ml{T_{k+1}}$ is perfectly covered by the main leg of another tile in $\mathfrak{T}$ whose bases are parallel to those of $T_{k+1}$. Therefore, by induction, we can obtain the desired result and confirm that the problem statement holds when $\rho\geq 2$. This ends the proof.
\end{proof}

Similar to MTM, we can derive from STS the non-existence of (strictly) main-to-main tilings of $\hat{S}\left(T,\rho,\alpha\right)$ with congruent copies of $T$ when $\alpha\leq a+b$, although we need further assumption for the case when $T$ is obtuse. The following two lemmas describe this.

\begin{lemma}\label{NOS}
    Let $\rho\geq 3$ be a positive integer, and $\alpha$ be a positive real number such that $\alpha<a+b$. Suppose that $T$ is not isosceles and satisfies the sub-to-sub property. If $T$ is not a $\pi/3$-right trapezoid, then there are no main-to-main tilings of the trapezoid $\hat{S}\left(T,\rho,\alpha\right)$ with congruent copies of $T$. Instead, if $T$ is a $\pi/3$-right trapezoid, then there are no strictly main-to-main tilings of $\hat{S}\left(T,\rho,\alpha\right)$ with congruent copies of $T$.
\end{lemma}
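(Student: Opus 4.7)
The plan is to mirror the proof of Lemma \ref{NOM}, substituting the MTM-based tools used there by their STS analogs: Lemma \ref{L1.1.1} for selecting tiles on $\ov{s_1}{s_2}$, and Lemma \ref{L1.1.1.1} or \ref{L1.1.1.2} for producing the cluster parallelogram. I would first assume for contradiction that such a (strictly) main-to-main tiling $\mathfrak{T}$ of $\hat{S}\left(T,\rho,\alpha\right)$ exists and split into three subcases on the size of $\alpha$: $a<\alpha<a+b$, $\alpha<a$, and $\alpha=a$.

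For $a<\alpha<a+b$, applying Lemma \ref{L1.1.1.1} when $T$ is not a right trapezoid and Lemma \ref{L1.1.1.2} when $T$ is a right trapezoid forces the parallelogram $t_1t_2t_2't_1'$ with $t_i'=t_i+(a+b,0)$ to be a cluster in $\mathfrak{T}$. A direct coordinate check shows this parallelogram cannot lie inside $\hat{S}\left(T,\rho,\alpha\right)$: in the acute and right-trapezoid cases the top edge $\ov{t_2}{t_3}$ has horizontal length $\alpha<a+b$, so the top corner $t_2'$ protrudes past $t_3$; in the obtuse case the bottom edge $\ov{t_1}{t_4}$ has length $\alpha<a+b$, so the bottom corner $t_1'=(a+b,0)$ protrudes past $t_4$. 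Either protrusion contradicts the cluster being a union of tiles in $\mathfrak{T}$.

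For $\alpha<a$, I would select tiles $T_1,\cdots,T_{\rho}\in\mathfrak{T}$ via Lemma \ref{L1.1.1}. When $T$ is acute (respectively obtuse), property (3) (resp.\ (2)) forces $F(T_{\rho})=\ub{T_{\rho}}$ (resp.\ $E(T_1)=\ub{T_1}$), of length $a>\alpha$, to lie on the horizontal line through the top edge (resp.\ bottom edge) of $\hat{S}\left(T,\rho,\alpha\right)$, which has length $\alpha$; the excess protrudes outside $\hat{S}$, a contradiction. When $T$ is a right trapezoid, property (4) forces some base of $T_{\rho}$, of length at least $a>\alpha$, to lie along the top edge of length $\alpha$, again producing a protrusion.

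For $\alpha=a$, I would again apply Lemma \ref{L1.1.1} and then remove the tile whose base of length exactly $a$ covers the short side of $\hat{S}\left(T,\rho,a\right)$---namely $T_{\rho}$ in the acute and right-trapezoid cases, and $T_1$ in the obtuse case. Using the fact (noted just before Section \ref{MTMP}) that every subcollection of a (strictly) main-to-main tiling is itself (strictly) main-to-main, the remaining subcollection tiles a polygon that, by direct coordinate computation, is congruent to $\hat{S}\left(T,\rho-1,b\right)$, where now $a<b<a+b$. This reduces to the cluster subcase already handled. The main obstacle I anticipate is the careful coordinate-level verification that the cluster parallelogram genuinely protrudes from $\hat{S}\left(T,\rho,\alpha\right)$ in each of the acute, right, and obtuse subcases, and that the removal step really yields $\hat{S}\left(T,\rho-1,b\right)$; the key computation underlying the latter is that $\ml{T_{\rho}}$ (or $\ml{T_1}$ in the obtuse case) turns out to be collinear with the old slanted leg of $\hat{S}\left(T,\rho,a\right)$, so that the apparent extra vertex from removal actually disappears.
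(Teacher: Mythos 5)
Your proposal is correct and follows essentially the same route as the paper's own proof: the same three-way split on $\alpha$, the same use of Lemma \ref{L1.1.1.1}/\ref{L1.1.1.2} to force a protruding cluster parallelogram when $a<\alpha<a+b$, the same protruding-base argument via Lemma \ref{L1.1.1} when $\alpha<a$, and the same removal of $T_{\rho}$ (non-obtuse) or $T_1$ (obtuse) to reduce $\alpha=a$ to $\hat{S}\left(T,\rho-1,b\right)$. The only difference is that you spell out the coordinate-level protrusion and collinearity checks more explicitly than the paper does, which is a harmless elaboration.
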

\begin{proof}
    Suppose that $T$ is not a $\pi/3$-right trapezoid and assume, for the sake of contradiction, that there is a main-to-main tiling $\mathfrak{T}$ of $\hat{S}\left(T,\rho,\alpha\right)$ with congruent copies of $T$. If $a<\alpha<a+b$, then the parallelogram $t_1t_2t_2't_1'$ is a cluster in $\mathfrak{T}$ by Lemma \ref{L1.1.1.1} and \ref{L1.1.1.2} but it is not contained in $\hat{S}\left(T,\rho,\alpha\right)$, which is a contradiction. Instead, if $\alpha\leq a$, then it follows from Lemma \ref{L1.1.1} that we can select tiles $T_1, T_2, \cdots, T_{\rho}\in\mathfrak{T}$ satisfying the six properties in Lemma \ref{L1.1.1} with $s_0,s_1,s_2,$ and $s_3$ replaced to $t_4,t_1,t_2,$ and $t_3$, respectively. In particular, when $\alpha<a$, the line segment $\ov{t_2}{t_3}$ lies on $\ub{T_{\rho}}$ as a proper subset if $T$ is not obtuse, and $\ov{t_1}{t_4}$ lies on $\ub{T_1}$ as a proper subset if $T$ is obtuse. However, this is a contradiction, for either $T_1$ or $T_{\rho}$ is not contained in $\hat{S}\left(T,\rho,\alpha\right)$. Thus, we further assume $\alpha=a$.

    If $\alpha=a$ and $T$ is not obtuse, then $\mathfrak{T}\backslash\{T_{\rho}\}$ is a tiling of a trapezoid congruent to $\hat{S}\left(T,\rho-1,b\right)$. The previous argument for the case $a<\alpha<a+b$ is also valid when $\rho=2$; thus, we can derive a contradiction. Similarly, if $\alpha=a$ and $T$ is obtuse, then $\mathfrak{T}\backslash\{T_1\}$ is a tiling of a trapezoid congruent to $\hat{S}\left(T,\rho-1,b\right)$, which is also a contradiction for the similar reason. Therefore, we conclude that for all $0<\alpha<a+b$, no tilings of $\hat{S}\left(T,\rho,\alpha\right)$ with congruent copies of $T$ is main-to-main. The analogous statement for $\pi/3$-right trapezoids can be proven similarly.
\end{proof}

\begin{lemma}\label{NOS2}
    Let $\rho\geq 3$ be a positive integer. Suppose that $T$ is not isosceles and satisfies the sub-to-sub property. Then, there are no main-to-main tilings of $\hat{S}\left(T,\rho,a+b\right)$ with congruent copies of $T$ if one of the following is satisfied.
    \begin{enumerate}
        \item $T$ is neither an obtuse trapezoid nor a $\pi/3$-right trapezoid;
        \item $T$ is obtuse, and there are no non-negative integers $p, q,$ and $r$ satisfying $pa+qb+r=nh$.
    \end{enumerate}
    Instead, if $T$ is a $\pi/3$-right trapezoid, then there are no strictly main-to-main tilings of $\hat{S}\left(T,\rho,a+b\right)$ with congruent copies of $T$.
\end{lemma}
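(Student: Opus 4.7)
The plan is to adapt the $\alpha=a+b$ step in the proof of Lemma~\ref{NOM}. Assume for contradiction that such a tiling $\mathfrak{T}$ of $\hat{S}(T,\rho,a+b)$ exists. In cases (1) and (3), Lemma~\ref{L1.1.1.1} (when $T$ is not a right trapezoid) or Lemma~\ref{L1.1.1.2} (when $T$ is a right trapezoid, invoking its case~1 for $\theta(T)\neq\pi/3$ and its case~2 for $\theta(T)=\pi/3$) produces the parallelogram $t_1 t_2 t_2' t_1'$ as a cluster in $\mathfrak{T}$. Let $\mathfrak{T}_1$ denote the collection of remaining tiles.

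In the non-obtuse subcases of (1) and in case (3), the formula for $f(T)$ forces $t_2'=t_3$, so $\mathfrak{T}_1$ tiles the triangle $t_1' t_3 t_4$. The side $\ov{t_1'}{t_3}$ has length $\rho h$ and meets $\ov{t_1'}{t_4}$ at angle $\psi(T)$; the argument of Lemma~\ref{L1.1.1} uses only STS along this segment together with the incidence angle at $t_1'$, so it adapts verbatim and furnishes tiles $T_1^*,\dots,T_\rho^*\in\mathfrak{T}_1$ whose subsidiary legs perfectly cover $\ov{t_1'}{t_3}$ in order, with $t_3\in T_\rho^*$. The vertex of $T_\rho^*$ at $t_3$ sits at the upper end of $\ssl{T_\rho^*}$, so its internal angle equals $\pi-\psi(T)$ if $T$ is acute and $\pi/2$ if $T$ is a right trapezoid. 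The interior angle of the triangle at $t_3$ equals $\pi-\theta(T)-\psi(T)$ or $\pi/2-\theta(T)$ respectively, each strictly smaller because $\theta(T)>0$. This is the required contradiction.

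In case (2), the definition of $g(T)$ gives $t_4=t_1'$, so $\mathfrak{T}_1$ tiles the triangle $t_1' t_2' t_3$, whose interior angle at $t_1'$ is $\psi(T)-\theta(T)$ rather than $\psi(T)$ (recall $\psi>\theta$ for obtuse $T$), preventing a direct application of Lemma~\ref{L1.1.1}. The plan is to analyze the shared edge $\ov{t_1'}{t_2'}$ (length $\rho h$, covered on the cluster side by $\rho$ subsidiary legs) from the $\mathfrak{T}_1$ side: if $p$ upper bases, $q$ lower bases, $r$ main legs, and $s$ subsidiary legs of $\mathfrak{T}_1$-tiles lie on it, then $pa+qb+r+sh=\rho h$, and the Diophantine hypothesis forbids $s=0$, forcing at least one $\ssl{\te}$ on $\ov{t_1'}{t_2'}$ from $\mathfrak{T}_1$. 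Combining this with STS at each such subsidiary leg and the angle bookkeeping at $t_1'$ (where only $\theta$-vertices of $T$ can appear, since $\psi-\theta$ is strictly less than each of $\psi$, $\pi-\theta$, and $\pi-\psi$) and at $t_3$ (angle $\theta$, which again admits only a $\theta$-vertex of some tile) should force a configuration that violates the main-to-main property of $\mathfrak{T}$ or overfills the angle $\pi-\psi(T)$ available at $t_2'$.

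The principal obstacle will be the combinatorial tracking in case (2): propagating the STS chain seeded by the forced subsidiary leg through the triangle and reconciling it with the horizontal top edge $\ov{t_2'}{t_3}$ and the slanted right edge $\ov{t_3}{t_1'}$, whose coverage emanates from the $\theta$-vertex at $t_3$, until a length- or angle-mismatch surfaces.
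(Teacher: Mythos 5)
Your treatment of cases (1) and (3) is essentially the paper's argument: extract the cluster parallelogram via Lemma \ref{L1.1.1.1} or \ref{L1.1.1.2}, observe that the residual region is the triangle $t_1't_2't_4$ with both angles along $\ov{t_1'}{t_2'}$ smaller than $\pi-\theta(T)$, rerun the covering argument of Lemma \ref{L1.1.1} to conclude that $\ov{t_1'}{t_2'}$ is perfectly covered by subsidiary legs of horizontally based tiles, and then note that the triangle's angle $\pi-\theta(T)-\psi(T)$ at $t_2'=t_3$ cannot absorb the angle $\pi-\psi(T)$ (resp. $\pi/2$) that the topmost such tile places there. That part is correct and matches the paper.

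Case (2) is where you have a genuine gap, and you half-admit it. From $pa+qb+r+sh=\rho h$ you extract only $s\geq 1$ and then propose to propagate an STS chain through the triangle until ``a length- or angle-mismatch surfaces''; that is a plan rather than a proof, and the open-ended combinatorial tracking you describe is unnecessary. The point you are missing is that the Diophantine hypothesis gives far more than $s\geq 1$: it gives $s=\rho$. Indeed, if $s<\rho$ then $pa+qb+r=(\rho-s)h$ with $\rho-s\geq 1$, which is exactly what the hypothesis forbids (the $n$ in the statement ranges over all positive integers, as spelled out in Corollary \ref{ENDS}); and if $s=\rho$ then $pa+qb+r=0$ forces $p=q=r=0$. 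Hence $\ov{t_1'}{t_2'}$ is perfectly covered by subsidiary legs alone, with no propagation needed. In particular, the tile whose subsidiary leg contains $t_1'$ contributes there an internal angle adjacent to its subsidiary leg, which for an obtuse trapezoid is $\psi(T)$ or $\pi-\psi(T)$, both at least $\psi(T)$; but the interior angle of the triangle $t_1't_2't_3$ at $t_1'$ is $\left(\pi-\theta(T)\right)-\left(\pi-\psi(T)\right)=\psi(T)-\theta(T)<\psi(T)$. That single local contradiction closes case (2).
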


\begin{proof}
    Suppose that one of the two conditions in the first statement is satisfied, and assume, for the sake of contradiction, that there is a main-to-main tiling $\mathfrak{T}$ of the trapezoid $\hat{S}\left(T,\rho,a+b\right)$ with congruent copies of $T$. From Lemma \ref{L1.1.1.1} and $\ref{L1.1.1.2}$, it follows that the parallelogram $t_1t_2t_2't_1'$ is a cluster in $\mathfrak{T}$.  Let $\mathfrak{T}_0$ be the collection of all tiles in $\mathfrak{T}$ not contained in the cluster $t_1t_2t_2't_1'$.

    If the first condition is satisfied, then $T$ is not obtuse, and thus $\mathfrak{T}_0$ is a tiling of the triangle $t_1't_2't_4$. By following the argument for $\ml{\te}$ and $\ub{\te}$ in the proof of Lemma \ref{L1.1.1}, we can observe that $\ml{T^{\mathfrak{T}_0}}$ and $\ub{T^{\mathfrak{T}_0}}$ cannot lie on $\ov{t_1'}{t_2'}$ since $T$ satisfies STS, and since the two internal angles $\angle t_1't_2't_4$ and $\angle t_2't_1't_4$ of the triangle $t_1't_2't_4$ have the size smaller than $\pi-\theta(T)$. Moreover, no $\lb{T^{\mathfrak{T}_0}}$ can lie on $\ov{t_1'}{t_2'}$ due to STS. Thus, the line segment $\ov{t_1'}{t_2'}$ is perfectly covered by the subsidiary legs of some tiles in $\mathfrak{T}_0$ whose bases are parallel to those of $\hat{S}\left(T,\rho,a+b\right)$. Then, the internal angle $\angle t_1't_2't_4$ of the triangle $t_1't_2't_4$ is filled with internal angles of some tiles in $\mathfrak{T}$ including at least one internal angle of size $\pi-\psi(T)$. However, this is a contradiction because
    $$
    \angle t_1't_2't_4=\left(\pi-\theta(T)\right)-\psi(T)<\pi-\psi(T).
    $$

    Next, suppose that the second condition is satisfied. Then, $T$ is obtuse, and therefore $\mathfrak{T}_0$ is a tiling of the triangle $t_1't_2't_3$. Moreover, $\ov{t_1'}{t_2'}$ is perfectly covered by the subsidiary legs of some tiles in $\mathfrak{T_0}$ due to the condition regarding $p, q,$ and $r$. This implies that the acute internal angle $\angle t_2't_1't_3$ of the triangle $t_1't_2't_3$ is filled with internal angles of some tiles in $\mathfrak{T}_0$ including at least one $\psi\left(T^{\mathfrak{T}_0}\right)$. However, this is a contradiction since
    $$
    \angle t_2't_1't_3=\left(\pi-\theta(T)\right)-\left(\pi-\psi(T)\right)=\psi(T)-\theta(T)<\psi(T).
    $$
    In conclusion, under either of the two conditions, no tilings of $\hat{S}\left(T,\rho,a+b\right)$ with congruent copies of $T$ is main-to-main. The second statement also follows from the argument for the first condition in the first statement.
\end{proof}

As we combined the results of Lemma \ref{L1.1} and \ref{NOM} to obtain Lemma \ref{ENDM}, we can combine those of Lemma \ref{L1.1.1.1}, \ref{L1.1.1.2}, \ref{NOS}, and \ref{NOS2} in an analogous way and derive the following corollary which is a generalization of the results of Lemma \ref{NOS} and \ref{NOS2} to trapezoids $\hat{S}\left(T,\rho,\alpha\right)$ with arbitrary $\alpha>0$.

\begin{corollary}\label{ENDS}
    Let $\rho\geq 3$ be a positive integer, and $\alpha$ be a positive real number. Suppose that $T$ is not isosceles and satisfies the sub-to-sub property. Then, there are no main-to-main tilings of the trapezoid $\hat{S}\left(T,\rho,\alpha\right)$ with congruent copies of $T$ if one of the following is true.
\begin{enumerate}
    \item $T$ is not obtuse;
    \item $T$ is obtuse and $\alpha$ is not an integer multiple of $a+b$;
    \item $T$ is obtuse, $\alpha$ is an integer multiple of $a+b$, and for each positive integer $n$, there are no non-negative integers $p, q,$ and $r$ satisfying $pa+qb+r=nh$.
\end{enumerate}
    Instead, if $T$ is a $\pi/3$-right trapezoid, then there are no strictly main-to-main tilings of $\hat{S}\left(T,\rho,\alpha\right)$ with congruent copies of $T$.
\end{corollary}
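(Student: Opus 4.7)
The plan is to mirror the induction from Corollary \ref{ENDM}, with Lemma \ref{L1.1} replaced by the cluster-constructing Lemmas \ref{L1.1.1.1} and \ref{L1.1.1.2}, and with Lemma \ref{NOM} replaced by the base-case Lemmas \ref{NOS} and \ref{NOS2}. Concretely, I would write $\alpha$ uniquely as $\alpha = q(a+b) + r$ with $q$ a non-negative integer and $0 \le r < a+b$, split into the subcases $r > 0$ and $r = 0$, and induct on $q$ in each. Throughout, ``main-to-main'' is read as ``strictly main-to-main'' whenever $T$ is a $\pi/3$-right trapezoid, since the hypotheses of Lemmas \ref{L1.1.1.2}, \ref{NOS}, and \ref{NOS2} switch correspondingly in that case.

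The inductive step is uniform across both subcases. Fix $q \ge 1$ and suppose, toward a contradiction, that a main-to-main tiling $\mathfrak{T}$ of $\hat{S}(T,\rho,q(a+b)+r)$ exists. Since $q \ge 1$ forces $\alpha \ge a+b > a$, either Lemma \ref{L1.1.1.1} (when $T$ is not a right trapezoid) or Lemma \ref{L1.1.1.2} (when $T$ is a right trapezoid) applies, and the parallelogram $t_1 t_2 t_2' t_1'$ is a cluster in $\mathfrak{T}$. Removing the tiles of this cluster leaves a tiling of the complementary region, which by the coordinates defining the $t_i$ is congruent to $\hat{S}(T,\rho,(q-1)(a+b)+r)$. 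This residual tiling is a subcollection of $\mathfrak{T}$, so it inherits the (strictly) main-to-main property and contradicts the induction hypothesis.

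For the base cases, when $r > 0$ I would take $q = 0$, so $\alpha < a+b$, and invoke Lemma \ref{NOS}: this uniformly covers bullet 1, bullet 2 (since $r > 0$ forces $\alpha$ not to be an integer multiple of $a+b$), and the $\pi/3$-right trapezoid clause. When $r = 0$ I would take $q = 1$, so $\alpha = a+b$, and invoke Lemma \ref{NOS2}: its first condition disposes of the non-obtuse, non-$\pi/3$-right case of bullet 1, its second condition is precisely the number-theoretic hypothesis of bullet 3, and its concluding sentence handles the $\pi/3$-right trapezoid clause. I anticipate no substantive obstacle; the only delicate point is verifying that the cluster $t_1 t_2 t_2' t_1'$ fits inside $\hat{S}(T,\rho,\alpha)$ and that the complementary region really has the shape $\hat{S}(T,\rho,\alpha-(a+b))$ (including the correct slant of the upper edge in the obtuse case, controlled by the $f(T)$ term in the definition of $\hat{S}$). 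The number-theoretic hypothesis of bullet 3 propagates trivially through the induction since it is a statement about $T$ alone and does not depend on $\alpha$ or $\rho$.
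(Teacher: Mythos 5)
Your proposal is correct and is exactly the argument the paper intends: the paper gives no explicit proof of Corollary \ref{ENDS}, stating only that one combines Lemmas \ref{L1.1.1.1}, \ref{L1.1.1.2}, \ref{NOS}, and \ref{NOS2} ``in an analogous way'' to the induction proving Corollary \ref{ENDM}, and your write-up (decomposition $\alpha=q(a+b)+r$, base cases from Lemmas \ref{NOS} and \ref{NOS2}, cluster-removal inductive step, and the strictly-main-to-main reading for $\pi/3$-right trapezoids) is a faithful execution of that plan.
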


\subsection{Stair-like sub-to-sub property}\label{SSTSP}
As mentioned in Section \ref{MTMP}, we introduce the SSTS property for right trapezoids. Suppose that $T$ is a right trapezoid. Let $\rho$ be a positive integer, and $\alpha$ and $\beta$ be positive real numbers such that $\alpha\leq\beta$. For each $i=1, 2, \cdots,\rho$, let
$$
w_{2i-1}=((i-1)(b-a),(i-1)h),\quad w_{2i}=((i-1)(b-a),ih)
$$
be points on $\mathds{R}^2$. In addition, put
\begin{align*}
    w_{2\rho+1}&=(\alpha+(\rho-1)(b-a),\rho h)\\
    w_0&=(\beta+(\rho-1)(b-a),0)\\
    w_{-1}&=(\beta+(\rho-1)(b-a),\rho h),
\end{align*}
and let $S^2\left(T,\rho,\alpha,\beta\right)$ be a polygon whose boundary contains the polygonal chain
$$
\left[w_{-1},w_0,w_1,\cdots,w_{2\rho+1}\right]
$$
and whose region contains the $(2\rho+3)$-gon $w_{-1}\cdots w_{2\rho+1}$ (or the $(2\rho+2)$-gon $w_0\cdots w_{2\rho+1}$ if $\alpha=\beta$); see the following figure, for instance.
\begin{figure}[H]
    \centering
    \includegraphics[width=0.4\linewidth]{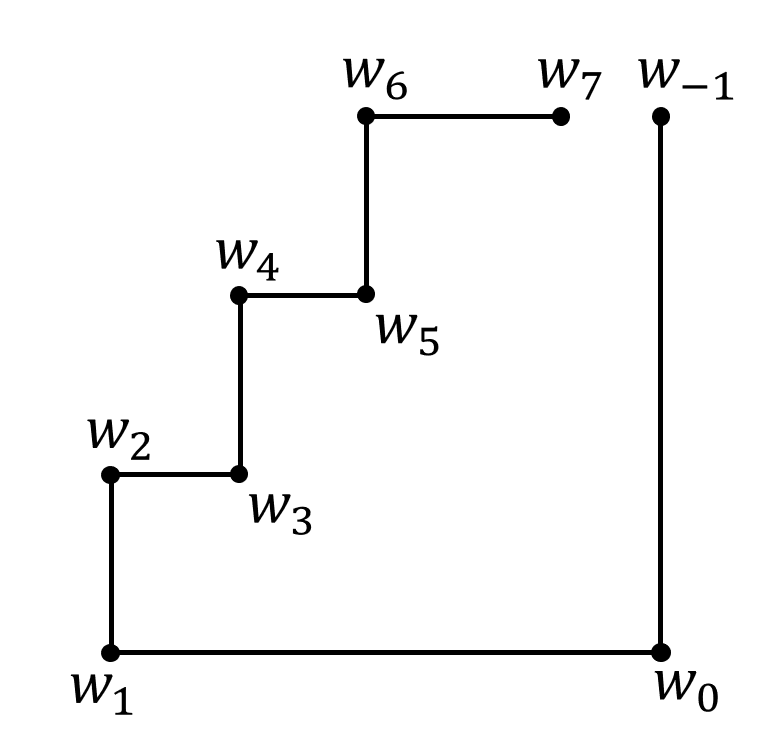}
    \caption{Part of $S^2\left(T,3,\alpha,\beta\right)$}
    \label{fig:STS}
\end{figure}
\noindent
We will say that $T$ satisfies the \textbf{stair-like sub-to-sub property} (abbreviated to \textbf{SSTS}) if it satisfies the following condition: for every positive integer $\rho$, positive real numbers $\alpha\leq\beta$, and polygon $S^2\left(T,\rho,\alpha,\beta\right)$ corresponding to these three numbers, if there is a tiling $\mathfrak{T}$ of $S^2\left(T,\rho,\alpha,\beta\right)$ with congruent copies of $T$, then for each $i=1, 2, \cdots,\rho$, the line segment $\ov{w_{2i-1}}{w_{2i}}$ is perfectly covered by a single $\ssl{\te}$. We denote the unique $(2\rho+2)$-gon $S^2\left(T,\rho,\alpha,\beta\right)$ corresponding to the three numbers $\rho$ and $\alpha=\beta$ by $\hat{S}^2(T,\rho,\alpha)$.

One can immediately notice that SSTS is a re-expression of the property that we have sought: only subsidiary legs of tiles in $\mathfrak{T}$ can perfectly cover each $\ssl{T_i}$ in Lemma \ref{L1}, provided that $T$ is a right trapezoid. The following lemma provides sufficient conditions for $T$ satisfying SSTS.

\begin{lemma}\label{LESTS}
    Suppose that $T$ is a right trapezoid. Then, it has the stair-like sub-to-sub property if one of the following conditions is satisfied.
    \begin{enumerate}
        \item There are no non-negative integers $p$ and $q$ such that $p\leq q+1$ and $pa+qb=h$;
        \item $a=h$ and $\theta(T)\neq\pi/4$.
    \end{enumerate}
\end{lemma}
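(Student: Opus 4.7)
The plan is to argue by contradiction: suppose $T$ lacks SSTS, so that for some polygon $S^2(T,\rho,\alpha,\beta)$ tiled by $\mathfrak{T}$ and some $i \in \{1,\dots,\rho\}$, the vertical segment $\ov{w_{2i-1}}{w_{2i}}$ is perfectly covered by edges $E_1,\dots,E_k$ of tiles $T_{(1)},\dots,T_{(k)}$ (indexed from bottom to top) in a way other than a single $\ssl{T_{(1)}}$. I will derive a contradiction from either condition (1) or condition (2).

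First, since $|\ml{T}|=1>h$, no $E_j$ can be a main leg, so each $E_j$ is a base or a subsidiary leg. Because $\sum_j |E_j|=h$, the presence of any subsidiary leg forces $k=1$ and $E_1=\ssl{T_{(1)}}$, contrary to assumption. Hence every $E_j$ is a base, giving $pa+qb=h$ where $p$ and $q$ count the upper and lower bases respectively; in particular, if $k=1$ then $(p,q)\in\{(1,0),(0,1)\}$, corresponding to $a=h$ or $b=h$. Next, I classify each $T_{(j)}$ by four types $\mathbf{L\pm}$ (lower base on segment) and $\mathbf{U\pm}$ (upper base on segment), according to whether its right angle sits at the bottom or top of $E_j$. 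Comparing the top and bottom internal angles of each type ($\theta(T)$, $\pi/2$, or $\pi-\theta(T)$) against the polygon's interior angles ($\pi/2$ at $w_{2i}$; $\pi/2$ at $w_{2i-1}$ when $i=1$ and $3\pi/2$ when $i\geq 2$) rules out $E_k$ of type $\mathbf{U+}$, and rules out $E_1$ of type $\mathbf{U-}$ when $i=1$. A systematic check of all $4\times 4$ transitions at interior junctions (where abutting tile-angles must sum to at most $\pi$) then shows that each $\mathbf{U-}$ is preceded by $\mathbf{L+}$ and each $\mathbf{U+}$ is followed by $\mathbf{L-}$. The resulting injections, with one possible exception for $E_1$, yield $p\leq q+1$.

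Under condition (1), the relation $pa+qb=h$ with $p\leq q+1$ and $p+q\geq 1$ is prohibited, yielding the contradiction immediately. Under condition (2), substituting $a=h$ and $b=h+\cos\theta(T)$ into $pa+qb=h$ gives $(p+q-1)h=-q\cos\theta(T)$, whose only non-negative solution is $(p,q)=(1,0)$; hence $k=1$ and $E_1\in\{\ssl{T_{(1)}},\ub{T_{(1)}}\}$. It remains to rule out $E_1=\ub{T_{(1)}}$, which would force $T_{(1)}$ of type $\mathbf{U-}$ with $i\geq 2$: its $\ssl$ would lie along the horizontal polygon boundary emanating east from $w_{2i}$, a segment of length $\cos\theta(T)\neq h=\sin\theta(T)$ since $\theta(T)\neq\pi/4$. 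I then study the wedge of polygon angle $\pi/2+\theta(T)$ left unfilled at the reflex corner $w_{2i-1}$, bounded on one side by $\ml{T_{(1)}}$ and on the other by the polygon boundary going west of length $\cos\theta(T)$. The tile adjacent to this western boundary must contribute some edge of $T$ along it; when $\theta(T)>\pi/4$, every edge of $T$ has length exceeding $\cos\theta(T)$, so this edge spills past $w_{2i-2}$ into the polygon's exterior, a contradiction. When $\theta(T)<\pi/4$, the candidate tile's short edges $\ssl$ or $\ub$ of length $h$ do fit along the western boundary, and a finer geometric check tracking the remainder of the tile against the staircase's excluded region is required; every such placement pushes the tile's left boundary west of the applicable riser $x=(i-2)\cos\theta(T)$ or below $y=0$, again a contradiction.

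The main obstacle will be the final geometric argument when $\theta(T)<\pi/4$: one must carry out a careful case analysis of how the westernmost tile in the wedge at $w_{2i-1}$ can be oriented (four orientations for each of the possible short-edge placements $\ssl$ on boundary or $\ub$ on boundary, with $\ssl$ going south in each case), and verify that the long edges $\lb$ of length $b=h+\cos\theta(T)$ or $\ml$ of length $1$ always protrude past the staircase riser $x=(j-1)\cos\theta(T)$ governing the relevant horizontal strip, or else below the line $y=0$. This is the longest arithmetic piece of the proof, though each sub-case reduces to a one-line inequality comparing $\tan\theta(T)$ with $1/(i-k)$ for small integers $k$.
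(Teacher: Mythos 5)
Your reading of the negation of SSTS is too narrow, and this is the central gap. SSTS fails when some riser $\ov{w_{2i-1}}{w_{2i}}$ is not perfectly covered by a single $\ssl{\te}$; this includes the case where the riser is only \emph{properly} covered, i.e.\ the covering edges overhang its lower endpoint. For $i\geq 2$ the vertex $w_{2i-1}$ is a reflex corner of $S^2\left(T,\rho,\alpha,\beta\right)$ (interior angle $3\pi/2$), and the vertical line through the riser continues into the interior of the polygon below $w_{2i-1}$, so for instance a lower base of length $b$ starting at $w_{2i}$ can legitimately overhang the riser. In that situation the identity $\sum_j\left|E_j\right|=h$, and with it the relation $pa+qb=h$ on which both of your cases rest, is simply unavailable. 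The paper's proof spends most of its effort precisely here: it shows that if the riser at level $i$ is properly covered, then the tread $\ov{w_{2i-2}}{w_{2i-1}}$ (length $b-a=\cos\theta(T)$, too short for $\ml{\te}$ or $\lb{\te}$) must be covered by subsidiary legs whose perpendicular bases then spoil the single-$\ssl{\te}$ covering of the next riser down, and it descends the staircase until it reaches $\ov{w_1}{w_2}$, both of whose endpoints are convex corners so that no overhang is possible and the length arithmetic finally applies. Without this downward induction the argument does not close.

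The same omission undermines your condition-(2) endgame. When a single $\ub{\te}$ covers the riser at level $i\geq 3$ and $\theta(T)<\pi/4$, there is in general no local contradiction in the wedge of angle $\pi/2+\theta(T)$: a tile whose subsidiary leg lies on the tread and contains $w_{2i-2}$ can have its upper base (length $a=h$) sit exactly on the riser $\ov{w_{2i-3}}{w_{2i-2}}$, with nothing protruding west of $x=(i-2)\cos\theta(T)$ or below $y=0$. This placement is locally consistent; its effect is to transfer the defect one level down (that riser is now covered by a single upper base rather than a single subsidiary leg), which again is resolved only by descending to the bottom riser. Your claim that every such placement protrudes past the relevant riser or below $y=0$ is therefore false as stated. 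The parts of your proposal that do work --- no $\ml{\te}$ on the riser, a single $\ssl{\te}$ exhausting the length budget, the injection giving $p\leq q+1$, the arithmetic eliminating condition (1) and reducing condition (2) to a single upper base, and the direct contradiction when $\theta(T)>\pi/4$ --- all track the paper's argument, but they only apply once the overhang possibility has been eliminated.
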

\begin{proof}
As explained earlier, choose a positive integer $\rho$, two real numbers $\alpha,\beta$ such that $\alpha\leq\beta$, and a polygon $S^2\left(T,\rho,\alpha,\beta\right)$ corresponding to these three numbers. Recall also the points $w_{-1},\cdots,w_{2\rho+1}$ on $\mathds{R}^2$. We assume that there is a tiling $\mathfrak{T}$ of $S^2\left(T,\rho,\alpha,\beta\right)$ with congruent copies of $T$ such that for some $i=1, 2, \cdots,\rho$, the line segment $\ov{w_{2i-1}}{w_{2i}}$ is not perfectly covered by a single $\ssl{\te}$ and then derive a contradiction.

\paragraph{Condition 1:} Suppose that the first condition in the problem statement holds. If $\rho=1$, then the line segment $\ov{w_1}{w_2}$ is perfectly covered by edges of some tiles in $\mathfrak{T}$ other than $\ssl{\te}$'s and $\ml{\te}$'s since $\left|\ssl{\te}\right|=h$ and $\left|\ml{\te}\right|>h$. Let $\mathcal{U}$ and $\mathcal{L}$ be the collection of tiles $\te$ such that $\ub{\te}\subset\ov{w_1}{w_2}$ and $\lb{\te}\subset\ov{w_1}{w_2}$, respectively. We claim that $|\mathcal{U}|\leq|\mathcal{L}|$. For each tile in $\mathcal{U}$, the acute angle between its main leg and $\ov{w_1}{w_2}$ is filled with a single main acute angle of a tile in $\mathcal{L}$. Moreover, the main acute angle of a tile in $\mathcal{L}$ cannot fill more than one such acute angle generated by tiles in $\mathcal{U}$. Such a correspondence gives us an injective map from $\mathcal{U}$ to $\mathcal{L}$. This proves our claim. However, the result of the claim contradicts the assumed first condition because
$$
|\mathcal{U}|a+|\mathcal{L}|b=|\mathcal{U}|\cdot\left|\ub{T}\right|+|\mathcal{L}|\cdot\left|\lb{T}\right|=\left|\ov{w_1}{w_2}\right|=h.
$$

Next, assume that $\rho\geq 2$ and for some $i=2, 3, \cdots,\rho$, the line segment $\ov{w_{2i-1}}{w_{2i}}$ is not perfectly covered by a single $\ssl{\te}$. We may further assume that $\ov{w_1}{w_2}$ is perfectly covered by a single $\ssl{\te}$ because of the previous discussion for $\rho=1$. We then assert that $\ov{w_{2i-1}}{w_{2i}}$ cannot be perfectly covered by edges of tiles in $\mathfrak{T}$. Assume, for the sake of contradiction, that some edges of tiles in $\mathfrak{T}$ perfectly cover $\ov{w_{2i-1}}{w_{2i}}$. Since $\left|\ssl{\te}\right|=h$ and $\left|\ml{\te}\right|>1$, this must be done without $\ssl{\te}$'s and $\ml{\te}$'s lying on $\ov{w_{2i-1}}{w_{2i}}$. Let $\mathcal{U'}$ and $\mathcal{L'}$ be the set of tiles $\te$ such that $\ub{\te}\subset\ov{w_{2i-1}}{w_{2i}}\backslash\{w_{2i-1}\}$ and $\lb{\te}\subset\ov{w_{2i-1}}{w_{2i}}$, respectively. Then, there is an injective map from $\mathcal{U'}$ to $\mathcal{L'}$ analogous to that from $\mathcal{U}$ to $\mathcal{L}$ defined for the case $\rho=1$. Thus, it follows that $\left|\mathcal{U'}\right|\leq\left|\mathcal{L'}\right|$ and that $\left|\mathcal{U'}\right|a+\left|\mathcal{L'}\right|b$ equals either $h$ or $h-a$. This contradicts the assumed first condition; therefore, we can confirm our assertion is true.

We now take a look at the line segment $\ov{w_{2i-2}}{w_{2i-1}}$ of length $b-a$. Note that $b-a<\left|\lb{T}\right|$ and $b-a=\cos{\theta(T)}<\left|\ml{T}\right|$. Moreover, by the assertion, there is a closed line segment $L$ which is a union of edges of some tiles in $\mathfrak{T}$ properly covering $\ov{w_{2i-1}}{w_{2i}}$, and $\ov{w_{2i-2}}{w_{2i-1}}$ is normal to $L$ and $\ov{w_{2i-3}}{w_{2i-2}}$ (see Figure \ref{fig:SSTSCON1}). These facts imply that $\ov{w_{2i-2}}{w_{2i-1}}$ is perfectly covered by some $\ssl{\te}$'s; otherwise if $\ub{T'}\subset\ov{w_{2i-2}}{w_{2i-1}}$ for some tile $T'\in\mathfrak{T}$, then the acute angle between $\ub{T'}$ and $\ov{w_{2i-2}}{w_{2i-1}}$ is filled with a single $\theta\left(\te\right)$, so either $\ml{\te}$ or $\lb{\te}$ also lies on $\ov{w_{2i-2}}{w_{2i-1}}$. Thus, at least one base of $\te$ lies on $\ov{w_{2i-3}}{w_{2i-2}}$ or vice versa. For this reason, $\ov{w_{2i-3}}{w_{2i-2}}$ is not perfectly covered by a single $\ssl{\te}$. Inductively, we can deduce that for each $j=1, 2, \cdots, i$, the line segment $\ov{w_{2j-1}}{w_{2j}}$ is not perfectly covered by a single $\ssl{\te}$. However, this contradicts the assumption that $\ov{w_1}{w_2}$ is perfectly covered by a single $\ssl{\te}$. This ends the proof for the first condition.

\begin{figure}[H]
    \centering
    \includegraphics[width=0.5\linewidth]{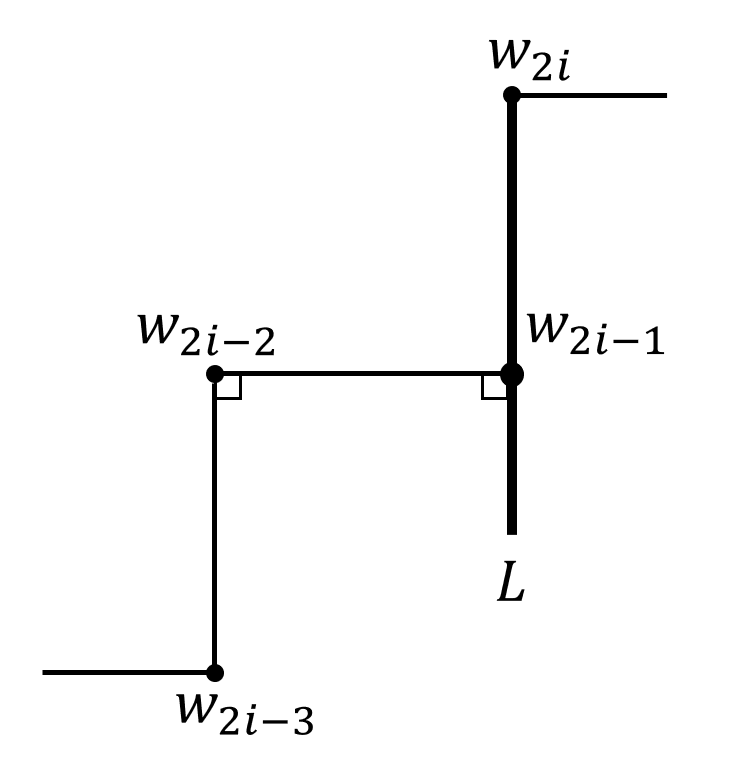}
    \caption{$\ov{w_{2i-2}}{w_{2i-1}}$ normal to $L$ and $\ov{w_{2i-3}}{w_{2i-2}}$}
    \label{fig:SSTSCON1}
\end{figure}

\paragraph{Condition 2:} Suppose that the second condition in the problem statement holds. If $\rho=1$, then $\ml{\te}$ and $\lb{\te}$ cannot lie on the line segment $\ov{w_1}{w_2}$ since $\left|\ml{\te}\right|>h$ and $\left|\lb{\te}\right|>h$. The same goes for $\ub{\te}$ because of the obtuse internal angle of $T$ between $\ub{T}$ and $\ml{T}$. Hence, we obtain the desired result for $\rho=1$.

For the case $\rho\geq 2$, if edges of some tiles in $\mathfrak{T}$ properly covers $\ov{w_{2i-1}}{w_{2i}}$, then we can apply the previous argument for $\ov{w_{2i-2}}{w_{2i-1}}$ under the first condition and then observe that $\ov{w_{2i-3}}{w_{2i-2}}$ is not perfectly covered by a single $\ssl{\te}$. Thus, it suffices to show the following: if the line segment $\ov{w_{2i-1}}{w_{2i}}$ is perfectly covered by a single $\ub{\te}$ for some $i=2, 3,\cdots,\rho$, then $\ov{w_{2i-3}}{w_{2i-2}}$ is not perfectly covered by a single $\ssl{\te}$. Suppose that there is a tile $T''\in\mathfrak{T}$ whose upper base perfectly covers $\ov{w_{2i-1}}{w_{2i}}$ (see Figure \ref{fig:SSTSCON2}). Put $L'=\ov{w_{2i-2}}{w_{2i-1}}$, and let $T'''\in\mathfrak{T}$ be the tile such that one of its edges lies on $L'$ and contains the point $w_{2i-2}$. Due to $\ml{T''}$ and $\ov{w_{2i-3}}{w_{2i-2}}$, the line segment $L$ is perfectly covered by edges of some tiles in $\mathfrak{T}$. Furthermore, neither $\ml{\te}$ nor $\lb{\te}$ lies on $L'$ because of their length. Thus, either $\ssl{T'''}$ or $\ub{T'''}$ lies on $L'$. If $\ssl{T'''}\subset L'$, then it is done since one of the two bases of $T'''$ covers $\ov{w_{2i-3}}{w_{2i-2}}$. On the other hand, if $\ub{T'''}\subset L'$, then we have $\ub{T'''}=L'$, for otherwise at least one of $\ml{\te}$ and $\lb{\te}$ lies on $L'$ while a single $\theta\left(\te\right)$ is filling the acute angle between $\ml{T'''}$ and $L'$. Moreover, the internal angle $\angle w_{2i-2}w_{2i-1}w_{2i}$ of $S^2\left(T,\rho,\alpha,\beta\right)$ of size $3\pi/2$ is filled with some internal angles of $\te$ including two obtuse internal angles of $T''$ and $T'''$. However, this is impossible when $\theta(T)<\pi/4$ because
$$
\frac{3\pi}{2}-\left(\pi-\theta\left(T''\right)\right)-\left(\pi-\theta\left(T'''\right)\right)=2\theta(T)-\frac{\pi}{2}<0.
$$
Similarly, when $\theta(T)>\pi/4$, no internal angles of $\te$ can fill the acute angle between $\ml{T''}$ and $\ml{T'''}$ since its size is $2\theta(T)-\pi/2$, smaller than $\theta(T)$. In either possibility, we obtain a contradiction, and thus $\ub{T'''}$ cannot lie on $L'$ in the first place. This ends the proof for the second condition.

\begin{figure}[H]
    \centering
    \includegraphics[width=0.5\linewidth]{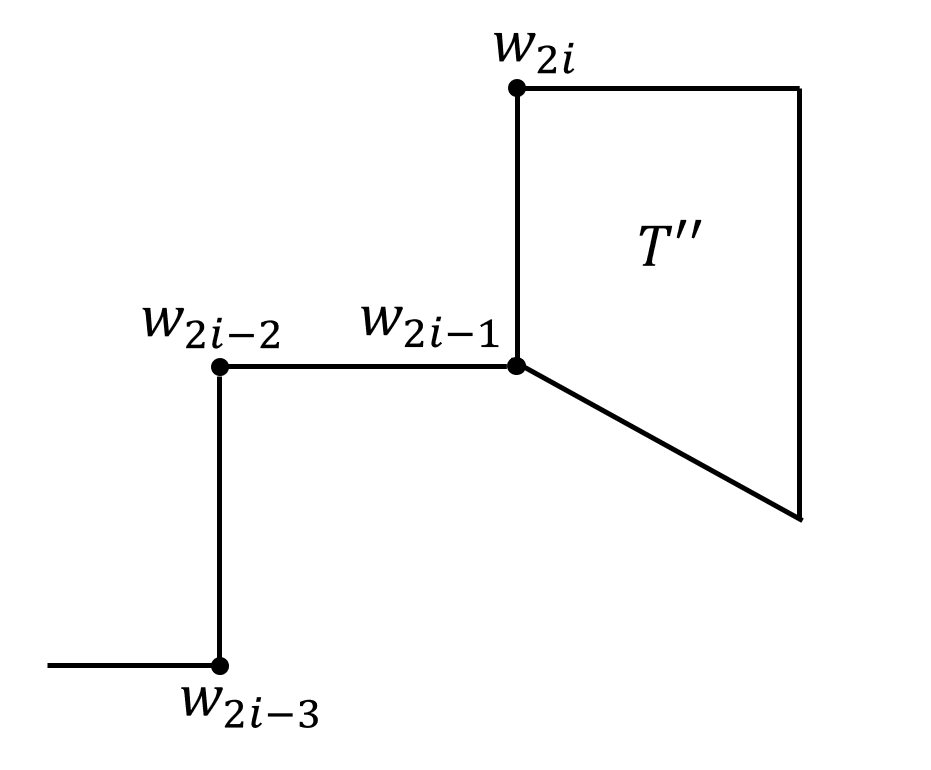}
    \caption{$\ub{T''}$ perfectly covering $\ov{w_{2i-1}}{w_{2i}}$}
    \label{fig:SSTSCON2}
\end{figure}
\end{proof}

Unlike MTM and STS, utilizing SSTS for perfectly covering each $\ssl{T_i}$ in Lemma \ref{L1} does not give us a cluster parallelogram as in Lemma \ref{L1.1} or Lemma \ref{L1.1.1.1}. However, by combining MTM and SSTS, we can obtain a stair-like cluster polygon, as shown in the following lemma.

\begin{figure}[H]
    \centering
    \includegraphics[width=0.6\linewidth]{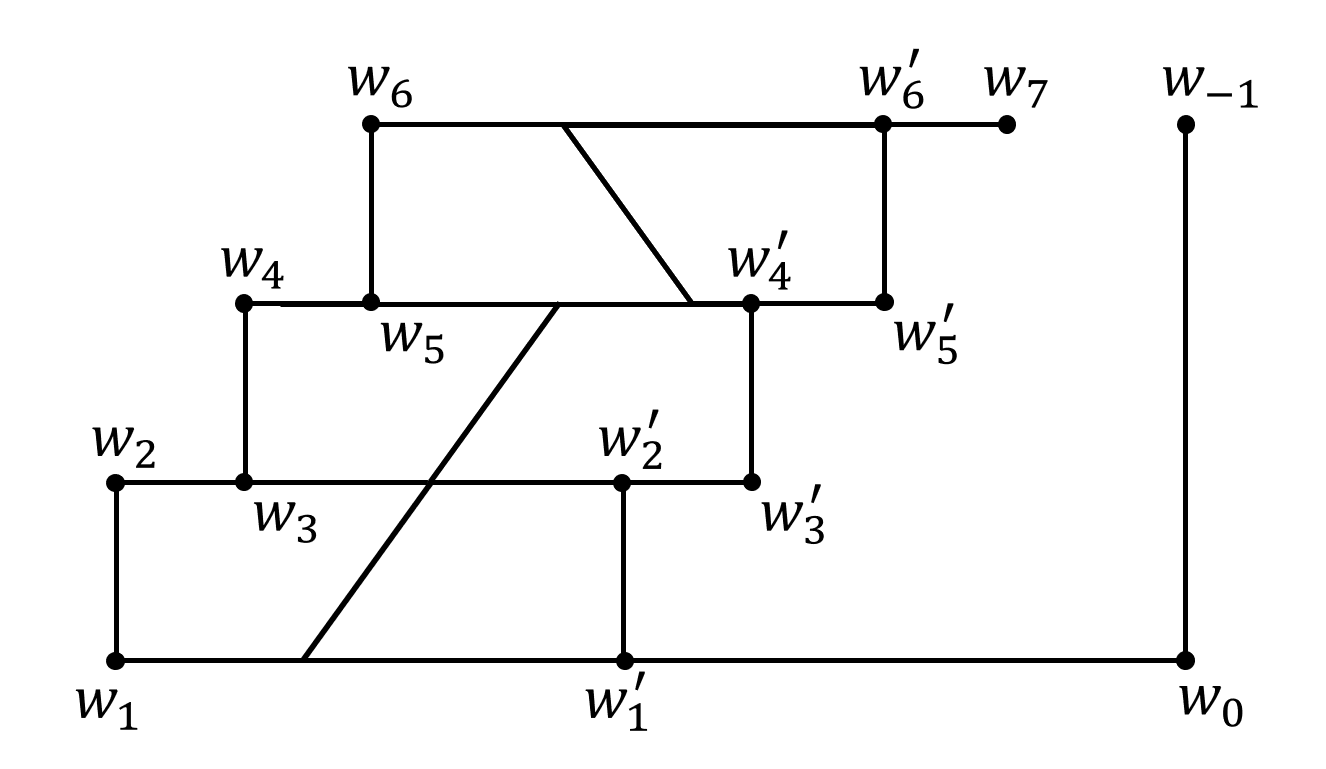}
    \caption{Cluster $w_1w_2\cdots w_6w_6'w_5'\cdots w_1'$ when $\rho=3$}
    \label{fig:L2}
\end{figure}

\begin{lemma}\label{L2}
    Suppose that $T$ is a right trapezoid satisfying both main-to-main and stair-like sub-to-sub properties. Let $\rho$ be a positive integer, and $\alpha$ and $\beta$ be positive real numbers such that $\beta\geq\alpha>b$; let $S^2\left(T,\rho,\alpha,\beta\right)$ be a polygon corresponding to these three numbers and the $2\rho+3$ points $w_{-1},w_0,\cdots,w_{2\rho+1}$ (or $2\rho+2$ points if $\alpha=\beta$) in the way explained earlier. If there is a tiling $\mathfrak{T}$ of $S^2\left(T,\rho,\alpha,\beta\right)$ with congruent copies of $T$, then the $4\rho$-gon
    $$
    w_1w_2\cdots w_{2\rho}w_{2\rho}'w_{2\rho-1}'\cdots w_1'
    $$
    is a cluster in $\mathfrak{T}$, where $w_i':=w_i+(a+b,0)$ for each $i=1, 2,\cdots, 2\rho$ (see Figure \ref{fig:L2}, for instance).
\end{lemma}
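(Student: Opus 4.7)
My plan is to mirror the strategy of Lemma \ref{L1.1}, obtaining the staircase cluster by first invoking SSTS to pin down $\rho$ tiles covering the staircase part of $\partial S^2(T,\rho,\alpha,\beta)$ and then pairing each of them with a second tile via MTM.

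First, applying SSTS to the tiling $\mathfrak{T}$, I obtain for each $i=1,\dots,\rho$ a tile $T_i\in\mathfrak{T}$ whose subsidiary leg perfectly covers $\ov{w_{2i-1}}{w_{2i}}$. Since $T$ is a right trapezoid and $\ssl{T_i}$ is vertical, $T_i$ has horizontal bases and right-angle corners at $w_{2i-1}$ and $w_{2i}$, and it lies on the interior side of $\ssl{T_i}$; the tile occupies either the positive or the negative half of the rectangle $R_i$ with corners $w_{2i-1},w_{2i},w_{2i}',w_{2i-1}'$.

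Next, I show that each $\ml{T_i}$ is perfectly covered by the main leg of a single paired tile $T_i'\in\mathfrak{T}$ of opposite orientation that fills the other half of $R_i$. The MTM property of $T$ rules out any $\lb{\te}$ lying on $\ml{T_i}$: otherwise, the local sub-cluster of $\mathfrak{T}$ attached to $\ml{T_i}$ at its $\theta(T)$-corner, together with the offending tile, realizes a tiling of a polygon of the form $M(T,1,\alpha')$ with a lower base on $\ov{u_1}{u_2}$, contradicting MTM. An angle analysis at the two endpoints of $\ml{T_i}$ — where $T_i$ contributes the acute angle $\theta(T)$ at one endpoint and the obtuse angle $\pi-\theta(T)$ at the other, leaving complementary angles $\pi-\theta(T)$ and $\theta(T)$ on the opposite side — combined with the length constraints $a<1$ and $h<1$, then rules out coverings of $\ml{T_i}$ by $\ub{\te}$'s and $\ssl{\te}$'s; this part is analogous to the arguments for $\ssl{T_1}$ in the proofs of Lemmas \ref{L1.1} and \ref{L1.1.1.1}. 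Thus a single $\ml{T_i'}$ perfectly covers $\ml{T_i}$; the standard matching of right-trapezoid main legs forces $T_i'$ to have the opposite orientation to $T_i$ with $\ssl{T_i'}=\ov{w_{2i-1}'}{w_{2i}'}$, so that $T_i$ and $T_i'$ together fill $R_i$. The rectangles $R_1,\dots,R_\rho$ then fit together along their shared horizontal segments of length $2a$ at heights $y=ih$ to form exactly the $4\rho$-gon $w_1w_2\cdots w_{2\rho}w_{2\rho}'\cdots w_1'$, which is therefore a cluster of $2\rho$ tiles in $\mathfrak{T}$.

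The main obstacle will be converting the trapezoid-level property MTM — which is stated only for specific polygons $M(T,\rho,\alpha)$ — into the local fact that no $\lb{\te}$ can lie on $\ml{T_i}$ inside $\mathfrak{T}$. This requires identifying an appropriate sub-cluster of $\mathfrak{T}$ whose polygon matches the MTM template, and then carrying out the angle/length case analysis carefully, especially in potentially degenerate situations such as $\theta(T)=\pi/4$ where certain perpendicular configurations can behave specially.
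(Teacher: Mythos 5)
Your opening step (using SSTS to produce tiles $T_1,\dots,T_\rho$ with $\ssl{T_i}=\ov{w_{2i-1}}{w_{2i}}$) and your picture of the final cluster as $\rho$ two-tile rectangles are both consistent with the paper. The fatal gap is the claim that ``the MTM property of $T$ rules out any $\lb{\te}$ lying on $\ml{T_i}$.'' It does not. MTM is a conditional: if a lower base lies on the distinguished segment, then that segment is not \emph{perfectly} covered by edges of tiles of the relevant sub-cluster. Applied to $\ml{T_i}$, the most it yields is that the far side of $\ml{T_i}$ cannot perfectly cover it once a $\lb{\te}$ lies on it; it leaves entirely open the possibility that the far side \emph{properly} covers $\ml{T_i}$, i.e.\ that a lower base lies on $\ml{T_i}$ together with a last edge $E_{n_1}$ that overhangs the endpoint of $\ml{T_i}$ into the adjacent region. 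This overhang scenario is not a degenerate corner case you can defer --- it is the crux of the lemma. The paper's proof devotes most of its length to it: after assuming $\lb{T_1'}$ lies on $\ml{T_1}$, it tracks the overhanging chain $E_1,\dots,E_{n_1}$ through the staircase, splits into the cases $a>b/2$, $a=b/2$, $b/3<a<b/2$, $a<b/3$ with $a\neq b/3$, and finally runs a second, separate induction (over $j$) for the stubborn case $a=b/3$, in which the overhang propagates across arbitrarily many steps of the staircase before MTM can be invoked against the accumulated segment $\cup_{i=1}^{m}\ml{T_i}$. Your ``angle analysis at the two endpoints combined with length constraints'' does not engage with any of this, and your own closing remark that converting MTM into the local fact ``no $\lb{\te}$ can lie on $\ml{T_i}$'' is ``the main obstacle'' is accurate --- that local fact is false as stated and cannot be obtained by exhibiting a single polygon $M(T,1,\alpha')$.

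Two further omissions: the paper's argument is an induction over $\rho$ (the rectangles are peeled off one at a time, with the induction hypothesis applied to the residual region congruent to $S^2(T,m-1,\cdot,\cdot)$, and the hypothesis $\alpha>b$ is what guarantees the residual base is long enough); and the case where $T_i$ is negative is handled by a different mechanism (Lemma \ref{L1} applied to the concatenated main legs meeting $\ov{w_0}{w_1}$ at angle $\theta(T)$), not by the same endpoint analysis as the positive case. As written, your proposal establishes the easy half of the lemma and asserts the hard half.
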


\noindent
The proof of the above lemma is long, so we postpone providing it until we bring and prove the next three results. Similar to Lemma \ref{NOM} and \ref{NOS}, MTM and SSTS combined imply the non-existence of tilings of $\hat{S}^2\left(T,\rho,\alpha\right)$ with congruent copies of $T$ when $\alpha<a+b$. This is described in the following lemma.

\begin{lemma}\label{L3}
    Let $\rho\geq 3$ be a positive integer, and $\alpha$ be a positive real number such that $\alpha\leq a+b$. If $T$ is a right trapezoid satisfying both main-to-main and stair-like sub-to-sub properties, then $\hat{S}^2\left(T,\rho,\alpha\right)$ cannot be tiled with congruent copies of $T$.
\end{lemma}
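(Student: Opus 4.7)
Assume for contradiction that a tiling $\mathfrak{T}$ of $\hat{S}^2(T,\rho,\alpha)$ with congruent copies of $T$ exists. I would split the argument on the size of $\alpha\in(0,a+b]$, using the stair-like sub-to-sub property directly for small $\alpha$ and combining it with Lemma \ref{L2} for larger $\alpha$.

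First, the SSTS property yields a tile $T_\rho\in\mathfrak{T}$ whose subsidiary leg perfectly covers the topmost riser $\ov{w_{2\rho-1}}{w_{2\rho}}$, with its right angle at $w_{2\rho}$ filling the polygon's $\pi/2$ internal angle there. In either possible orientation of $T_\rho$ (positive or negative) the length-$b$ base of $T_\rho$ emanates rightward from one of $w_{2\rho-1},w_{2\rho}$, and for that base to stay inside $\hat{S}^2(T,\rho,\alpha)$ we must have $b\leq\alpha$; hence $\alpha<b$ is impossible. When $\alpha=b$ the tile $T_\rho$ fits flush against the right edge of the polygon, but its main leg cuts off a right triangle in the upper right of level $\rho$ with legs $b-a$ and $h$ and hypotenuse of length $1=|\ml{T}|$. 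The acute angle $\theta(T)$ at the triangle's top vertex must be filled by a single $\theta(\te)$, forcing that tile to have either its main leg or its lower base along the hypotenuse; since $b-a=\cos\theta(T)<1$ and $b>b-a$, in both configurations the tile extrudes past the horizontal leg of the triangle, a contradiction.

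For the remaining range $b<\alpha\leq a+b$, I would invoke Lemma \ref{L2}, which places the $4\rho$-gon cluster $w_1w_2\cdots w_{2\rho}w_{2\rho}'w_{2\rho-1}'\cdots w_1'$ inside $\mathfrak{T}$. If $b<\alpha<a+b$, this cluster has horizontal extent $a+b$ at the top level and hence leaks past the polygon's right edge $x=(\rho-1)(b-a)+\alpha$, a contradiction. If $\alpha=a+b$, the cluster fits snugly, and the tiles of $\mathfrak{T}$ not contained in it tile a staircase residue which, after translating by $-(a+b,0)$, is congruent to $\hat{S}^2(T,\rho-1,b-a)$. Since $b-a<b$, the argument of the first paragraph applied to this smaller staircase again shows no valid tile can cover its top riser, closing the proof by contradiction. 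The most delicate step is recognizing the residue in this last subcase as a genuine $\hat{S}^2(T,\rho-1,b-a)$ so that the earlier SSTS argument can be reused.
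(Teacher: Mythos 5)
Your overall skeleton matches the paper's: exclude $\alpha<b$ because the length-$b$ base of the tile on the top riser cannot fit in a strip of width $\alpha$; dispose of $b<\alpha<a+b$ by letting the Lemma \ref{L2} cluster overflow the right edge; and at $\alpha=a+b$ peel off the cluster, identify the residue with $\hat{S}^2\left(T,\rho-1,b-a\right)$, and reuse the small-$\alpha$ exclusion since $b-a<b$. Those three pieces are essentially the paper's argument. However, your treatment of the boundary subcase $\alpha=b$ has a genuine gap: everything you say there presupposes that the tile $T_\rho$ on the top riser is \emph{positive}, so that its main leg isolates a right triangle whose horizontal leg lies on the top edge $\ov{w_{2\rho}}{w_{2\rho+1}}$ and whose vertical leg lies on the polygon's right edge. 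Only in that orientation is the triangle genuinely cut off from the rest of the region (and then a contradiction is immediate, e.g.\ because its area $(b-a)h/2$ is smaller than the area of one tile). If $T_\rho$ is \emph{negative} — lower base of length $b$ along the top edge, upper base of length $a$ at height $(\rho-1)h$ — the residual triangle in the top strip sits in the lower right and its horizontal side at height $(\rho-1)h$ is interior to the polygon, so nothing is cut off and no contradiction arises at this stage. Your angle-filling argument also does not apply, since the triangle's $\theta(T)$-vertex is then an interior point of the polygon rather than a boundary vertex that tiles must fill from one side.

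That negative case is in fact the bulk of the paper's proof of this lemma. There one uses SSTS on \emph{every} riser to produce tiles $T_1,\dots,T_\rho$ descending the staircase, splits according to whether all of them are negative or some $T_j$ is the first positive one, shows in the latter case that the segment $L=\ov{w_{2\rho-2j+2}}{w_{2\rho-2j+3}}\cup\ub{T_{j-1}}$ of length $b$ must be properly covered, and then applies Lemma \ref{L1} to the concatenated main legs $\cup_i\ml{T_i}$ to force a positive tile whose main leg covers $\ml{T_1}$ — a tile that necessarily protrudes outside $\hat{S}^2\left(T,\rho,b\right)$. Without this (or some substitute), the case $\alpha=b$ is not closed, and since the $\alpha=a+b$ step of your own plan reduces to the small-$\alpha$ analysis, the gap propagates there as well (though in that reduction the residue's top step has width $b-a<b$, so only the strict inequality $\alpha<b$ is actually needed and that part survives). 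You should either supply the descent argument for a negative $T_\rho$ or find another way to rule out that orientation.
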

\begin{proof}
    Recall the points $w_{0},w_1,\cdots,w_{2\rho+1}$ in $\mathds{R}^2$ which are all of the vertices of $\hat{S}^2\left(T,\rho,\alpha\right)$. Assume, for the sake of contradiction, that there is a tiling $\mathfrak{T}$ of  $\hat{S}^2\left(T,\rho,\alpha\right)$ with congruent copies of $T$. If $b<\alpha< a+b$, then by Lemma \ref{L2}, the $4\rho$-gon
    $$
    w_1w_2\cdots w_{2\rho}w_{2\rho}'w_{2\rho-1}'\cdots w_1'
    $$
    is a cluster in $\mathfrak{T}$, where $w_i'=w_i+(a+b,0)$ for each $i=1, 2, \cdots, 2\rho$. However, since $\alpha<a+b$, the two points $w_{2\rho}'$ and $w_{2\rho-1}'$ are not contained in $\hat{S}^2\left(T,\rho,\alpha\right)$, which is a contradiction.
    
    Next, suppose that $\alpha\leq b$. Since $T$ satisfies SSTS, there is a tile $T_1\in\mathfrak{T}$ such that $\ssl{T_1}=\ov{w_{2\rho-1}}{w_{2\rho}}$. From this, it immediately follows that the distance between two parallel line segments $\ov{w_{2\rho-1}}{w_{2\rho}}$ and $\ov{w_{0}}{w_{2\rho+1}}$ is at least $b$, which is possible only if $\alpha\geq b$. Thus, we may exclude the case $\alpha<b$ and assume $\alpha=b$. Moreover, we may further assume that $T_1$ is negative, for otherwise, we obtain a right-triangular area inside $\hat{S}^2\left(T,\rho,b\right)$ that cannot be tiled by tiles in $\mathfrak{T}$.

    \begin{figure}[H]
    \centering
    \includegraphics[width=0.4\linewidth]{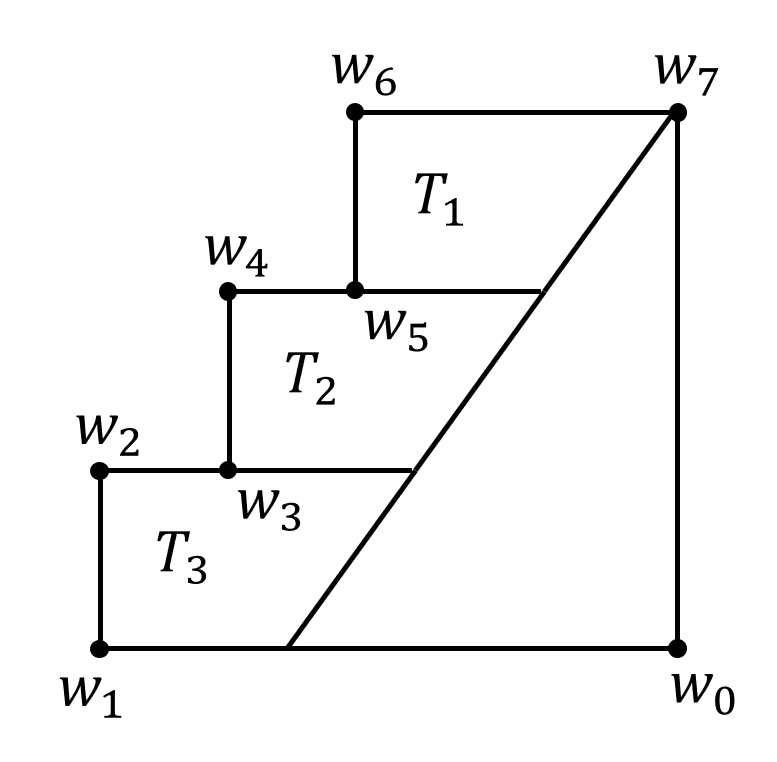}
    \caption{$\hat{S}^2\left(T,3,b\right)$ and three negative tiles $T_1,T_2,$ and $T_3$}
    \label{fig:L3-1}
\end{figure}

    By SSTS, for each $i=2, 3, \cdots, \rho$, we can select a tile $T_i\in\mathfrak{T}$ such that $\ssl{T_i}=\ov{w_{2\rho-2i+1}}{w_{2\rho-2i+2}}$. If $T_i$ is negative for all $i=2, 3, \cdots, \rho$, then $\cup_{i=1}^{\rho}\ml{T_i}$ is a closed line segment (see Figure \ref{fig:L3-1}, for instance). Moreover, the interior of $\ov{w_0}{w_1}$ contains an endpoint of $\cup_{i=1}^{\rho}\ml{T_i}$, and the acute angle between $\ov{w_0}{w_1}$ and $\cup_{i=1}^{\rho}\ml{T_i}$ has the same size as $\theta(T)$. Thus, by Lemma \ref{L1}, each $\ml{T_i}$ is perfectly covered by the main leg of some positive tile in $\mathfrak{T}$. However, this is a contradiction because the positive tile whose main leg perfectly covers $\ml{T_1}$ is not contained in $\hat{S}^2\left(T,\rho,b\right)$.

\begin{figure}[H]
    \centering
    \includegraphics[width=0.4\linewidth]{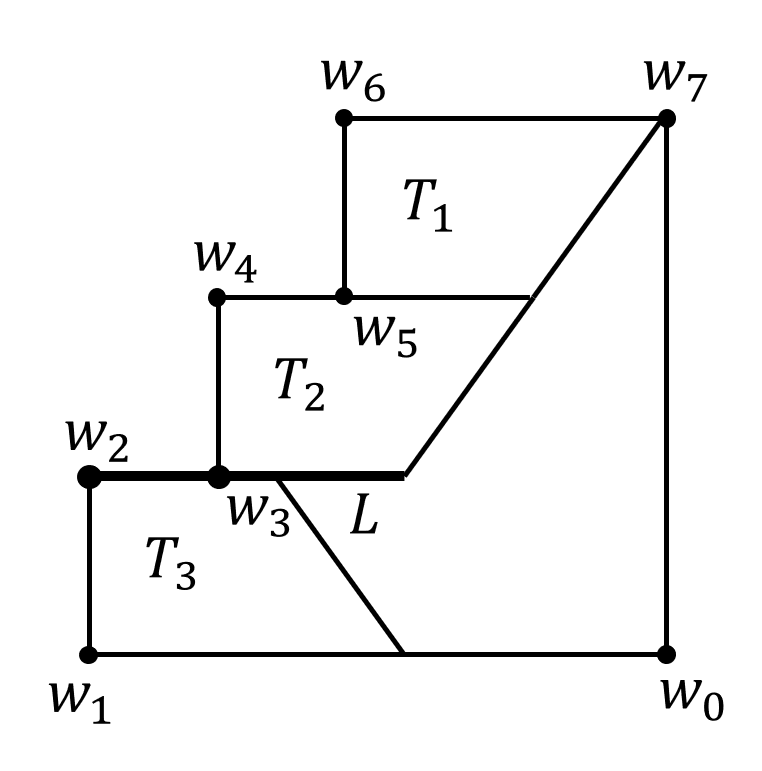}
    \caption{$\hat{S}^2\left(T,3,b\right)$ and three tiles $T_1,T_2,$ and $T_3$ when $j=3$}
    \label{fig:L3-2}
\end{figure}
    
    Instead, suppose that for some $j=2, 3, \cdots, \rho$, the tile $T_j$ is positive, and $T_1, \cdots, T_{j-1}$ are negative. Put $L=\ov{w_{2\rho-2j+2}}{w_{2\rho-2j+3}}\cup\ub{T_{j-1}}$ (see Figure \ref{fig:L3-2}, for instance). Then, $L$ is a closed line segment of length $b$, and thus $\ub{T_j}\subset L$ and the interior of $L$ contains an endpoint of $\ml{T_j}$. Here, the acute angle between $\ml{T_j}$ and $L$ is filled with a single $\theta\left(\te\right)$, which implies that either $\ml{\te}$ or $\lb{\te}$ lies on $L$. Since $\left|L\backslash\ub{T_j}\right|=b-a<b$, and since $b-a=\cos\theta(T)<1$, it then follows that edges of some tiles in $\mathfrak{T}$ properly cover $L$. Therefore, we can apply Lemma \ref{L1} to the line segment $\cup_{i=1}^{j-1}\ml{T_i}$ and derive a contradiction as we did for the case when $T_i$ is negative for all $i=1, 2, \cdots,\rho$. In conclusion, $\hat{S}^2\left(T,\rho,b\right)$ cannot be tiled with congruent copies of $T$.

    Finally, if $\alpha=a+b$, then similar to the case $b<\alpha<a+b$, the aforementioned $4\rho$-gon is a cluster in $\mathfrak{T}$; let $\mathfrak{T}_0$ be the collection of tiles in $\mathfrak{T}$ not contained in that cluster. Then, $\mathfrak{T}_0$ is a tiling of remaining region of $\hat{S}^2\left(T,\rho,a+b\right)$ not covered by the cluster $4\rho$-gon, which is congruent to $\hat{S}^2\left(T,\rho-1,b-a\right)$. We can derive a contradiction by applying the preceding argument of excluding the case $\alpha<b$ (which is also valid for $\rho=2$). This ends the proof.
\end{proof}

We can conduct an inductive process analogous to that of Corollary \ref{ENDM} and \ref{ENDS} by combining the results of Lemma \ref{L2} and \ref{L3}, which gives us the following result.

\begin{corollary}\label{ENDSS}
    Let $\rho\geq 3$ be a positive integer, and $\alpha$ be a positive real number. If $T$ is a right trapezoid satisfying both main-to-main and stair-like sub-to-sub properties, then $\hat{S}^2\left(T,\rho,\alpha\right)$ cannot be tiled with congruent copies of $T$.
\end{corollary}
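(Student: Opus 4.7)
My plan is to mirror the inductive argument that was used to upgrade Lemma \ref{NOM} into Corollary \ref{ENDM} (and Lemma \ref{NOS}/\ref{NOS2} into Corollary \ref{ENDS}), now peeling off the stair-like cluster of Lemma \ref{L2} instead of a parallelogram cluster. Since $\alpha>0$ and $a+b>0$, I would first write $\alpha=q(a+b)+r$ with $q\in\mathds{Z}_{\geq 0}$ and $0\leq r<a+b$, and split into the two cases $r>0$ and $r=0$, inducting on $q$ in each.

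For the case $r>0$, fix such an $r=r'\in(0,a+b)$ and induct on $q$. The base $q=0$ gives $\alpha=r'<a+b$, which is exactly the content of Lemma \ref{L3}. For the inductive step, assume $\hat{S}^2(T,\rho,(n-1)(a+b)+r')$ admits no tiling by congruent copies of $T$, and suppose for contradiction that a tiling $\mathfrak{T}$ of $\hat{S}^2(T,\rho,n(a+b)+r')$ exists. Since $\alpha=n(a+b)+r'>a+b>b$, I may apply Lemma \ref{L2} (with $\alpha=\beta$) to conclude that the $4\rho$-gon $w_1w_2\cdots w_{2\rho}w_{2\rho}'w_{2\rho-1}'\cdots w_1'$, with $w_i'=w_i+(a+b,0)$, is a cluster in $\mathfrak{T}$. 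The complement of this cluster inside $\hat{S}^2(T,\rho,n(a+b)+r')$ is bounded by a polygonal chain of the same stair-like shape but with the $x$-extent of both horizontal sides reduced by exactly $a+b$, so it is congruent to $\hat{S}^2(T,\rho,(n-1)(a+b)+r')$. The tiles of $\mathfrak{T}$ not contained in the cluster therefore form a tiling of this smaller region, contradicting the induction hypothesis.

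For the case $r=0$, start the induction at $q=1$: the base $\alpha=a+b$ is again covered by Lemma \ref{L3}. The inductive step is identical to the one above, since the appeal to Lemma \ref{L2} only required $\alpha>b$, which still holds. Combining the two cases exhausts all $\alpha>0$ and proves the corollary.

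The main obstacle is purely bookkeeping: verifying that after excising the stair-like cluster from $\hat{S}^2(T,\rho,n(a+b)+r)$, the residual region really is a congruent copy of $\hat{S}^2(T,\rho,(n-1)(a+b)+r)$ rather than some slightly different polygon. This amounts to checking that the new right-hand boundary runs through $w_1',w_2',\ldots,w_{2\rho}'$ together with the translated copies of $w_0$ and $w_{2\rho+1}$, and that the translation by $(a+b,0)$ preserves the defining coordinates of $\hat{S}^2$. Once this identification is made, the induction is mechanical, and no further geometric input beyond Lemma \ref{L2} and Lemma \ref{L3} is needed.
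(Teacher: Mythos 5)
Your proposal is correct and is exactly the argument the paper intends: the paper gives no written proof of Corollary \ref{ENDSS} beyond the remark that one runs the same induction as in Corollaries \ref{ENDM} and \ref{ENDS}, writing $\alpha=q(a+b)+r$, using Lemma \ref{L3} for the base cases $\alpha\leq a+b$, and using Lemma \ref{L2} (valid since $\alpha\geq a+b>b$ in the inductive step) to excise the stair-like $4\rho$-gon cluster and reduce to $\hat{S}^2\left(T,\rho,\alpha-(a+b)\right)$. The bookkeeping you flag checks out, since translating each $w_i$ by $(a+b,0)$ shortens both horizontal sides by exactly $a+b$ and leaves a congruent copy of the smaller stair-like polygon.
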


Finally, we can obtain the following result for $\hat{M}(T,\rho,\alpha)$ from Lemma \ref{L1} and the above corollary, which resembles Corollary \ref{ENDM}.

\begin{corollary}\label{ENDSSS}
    Choose a positive integer $\rho\geq 3$ and a positive real number $\alpha$. Suppose that $T$ is a right trapezoid that satisfies the main-to-main and stair-like sub-to-sub properties. Then, there are no tilings $\mathfrak{T}$ of the trapezoid $\hat{M}\left(T,\rho,\alpha\right)$ with congruent copies of $T$.
\end{corollary}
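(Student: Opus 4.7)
The plan is to assume for contradiction that a tiling $\mathfrak{T}$ of $\hat{M}(T,\rho,\alpha)$ by congruent copies of $T$ exists and then combine Lemma \ref{L1} with Corollary \ref{ENDSS} to derive a contradiction. Since the main leg $\ov{v_1}{v_2}$ of $\hat{M}(T,\rho,\alpha)$ lies on its boundary, it is perfectly covered by edges of tiles in $\mathfrak{T}$; hence the main-to-main property together with Lemma \ref{L1} (applied with $u_0,u_1,u_2$ replaced by $v_4,v_1,v_2$) produces tiles $T_1,\cdots,T_\rho\in\mathfrak{T}$ whose main legs perfectly cover $\ov{v_1}{v_2}$ in order, with $\ub{T_i}\varsubsetneq\lb{T_{i+1}}$ for each $i$ and $\lb{T_1}$ lying along $\ov{v_1}{v_4}$. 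Because $T$ is a right trapezoid, the subsidiary legs of $T_1,\cdots,T_\rho$ are perpendicular to the bases and together form a staircase along the right boundary of $T_1\cup\cdots\cup T_\rho$.

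Next, I would analyze the remaining region $R=\hat{M}(T,\rho,\alpha)\backslash\bigcup_{i=1}^{\rho}T_i$, which is tiled by $\mathfrak{T}\backslash\{T_1,\cdots,T_\rho\}$, according to the size of $\alpha$ relative to $a$. If $\alpha<a$, then $\ub{T_\rho}$ (of length $a$, starting at $v_2$) would extend past $v_3$, contradicting $T_\rho\subset\hat{M}(T,\rho,\alpha)$. If $\alpha>a$, a direct computation of boundary lengths shows that $R$ is congruent to $\hat{S}^2(T,\rho,\alpha-a)$: its bottom has length $(\rho-1)\cos\theta(T)+(\alpha-a)$, its right edge has length $\rho h$, its top has length $\alpha-a$, and its left boundary is a $\rho$-step staircase. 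Since $\alpha-a>0$ and $\rho\geq 3$, Corollary \ref{ENDSS} directly yields the contradiction.

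The remaining case is $\alpha=a$. Here $\ub{T_\rho}$ coincides with $\ov{v_2}{v_3}$ and $\ssl{T_\rho}$ lies on the upper $h$-portion of $\ov{v_3}{v_4}$, so the analogous calculation gives $R\cong\hat{S}^2(T,\rho-1,b-a)$. For $\rho\geq 4$, we have $\rho-1\geq 3$ and Corollary \ref{ENDSS} again applies. The main obstacle is the subcase $\rho=3$, in which $R\cong\hat{S}^2(T,2,b-a)$ falls below the $\rho\geq 3$ threshold of Corollary \ref{ENDSS}. To handle this subcase, I would appeal directly to the stair-like sub-to-sub property of $T$ applied to $R$: the stair verticals $\ssl{T_1}$ and $\ssl{T_2}$ must each be perfectly covered by a single $\ssl{\te}$. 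Let $T^2\in\mathfrak{T}\backslash\{T_1,T_2,T_3\}$ be the tile with $\ssl{T^2}=\ssl{T_2}$; it lies to the right of $\ssl{T_2}$ with its bases extending rightward, so its rightmost vertex has $x$-coordinate $3\cos\theta(T)+2a$, strictly exceeding the right boundary of $R$ at $x=3\cos\theta(T)+a$. Hence $T^2$ cannot fit inside $R$, giving the final contradiction.
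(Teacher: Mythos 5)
Your proof is correct and follows essentially the same route as the paper: peel off the tiles $T_1,\dots,T_\rho$ supplied by Lemma \ref{L1}, identify the leftover region with $\hat{S}^2\left(T,\rho,\alpha-a\right)$ (or with $\hat{S}^2\left(T,\rho-1,b-a\right)$ when $\alpha=a$), and invoke Corollary \ref{ENDSS}. Your explicit width argument for the remaining subcase $\rho=3$, $\alpha=a$ — the tile whose subsidiary leg covers the top stair vertical would protrude past the right boundary since its lower base of length $b$ exceeds the available width $b-a$ — is exactly the mechanism the paper borrows from the $\alpha\le b$ step in the proof of Lemma \ref{L3}.
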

\begin{proof}
     Assume, for the sake of contradiction, that there is a tiling $\mathfrak{T}$ of the trapezoid $\hat{M}\left(T,\rho,\alpha\right)$. By Lemma \ref{L1}, we can select tiles $T_1,T_2,\cdots,T_{\rho}\in\mathfrak{T}$ satisfying the three properties in Lemma \ref{L1} with $u_0,u_1,$ and $u_2$ replaced to $t_4,t_1,$ and $t_2$, respectively. Then, $\ub{T_{\rho}}$ lies on $\ov{t_2}{t_3}$. This is a contradiction when $\alpha<a$ since $\left|\ov{t_2}{t_3}\right|=\alpha$. Thus, we may assume $\alpha\geq a$. Put $\mathfrak{T}_0=\mathfrak{T}\backslash\{T_1,T_2,\cdots,T_{\rho}\}$. If $\alpha>a$, then $\mathfrak{T}_0$ is a tiling, with congruent copies of $T$, of a $(2\rho+2)$-gon congruent to $\hat{M}\left(T,\rho,\alpha-a\right)$, and thus we derive a contradiction from Corollary \ref{ENDSS}. Instead, if $\alpha=a$, then $\mathfrak{T}_0$ is a tiling, with congruent copies of $T$, of a $2\rho$-gon congruent to $\hat{M}\left(T,\rho-1,b-a\right)$. By Corollary \ref{ENDSS}, this is a contradiction if $\rho\geq 4$. We can obtain the same result for the case $\rho=3$ considering the argument for $\hat{S}^2\left(T,\rho-1,b-a\right)$ in the last paragraph of the proof of Lemma \ref{L3}. Therefore, $\hat{M}\left(T,\rho,\alpha\right)$ cannot be tiled with congruent copies of $T$ in the first place.
\end{proof}

We end this section by proving Lemma \ref{L2}.

\paragraph{Proof of Lemma \ref{L2}}

    We use an induction over $\rho$. First suppose that $\rho=1$ and that there is a tiling $\mathfrak{T}_0$ of $S^2\left(T,1,\alpha,\beta\right)$ with congruent copies of $T$. By SSTS, there is a tile $T'\in\mathfrak{T}_0$ such that $\ssl{T'}=\ov{w_1}{w_2}$. If $T'$ is positive, then two parallel line segments $\ov{w_0}{w_1}$ and $\ov{w_2}{w_3}$ both contain an endpoint of $\ml{T'}$ in their interior. Moreover, the acute angle between $\ml{T'}$ and $\ov{w_2}{w_3}$ has the same size as $\theta(T)$, and thus, by Lemma \ref{L1}, the main leg of some negative tile in $\mathfrak{T}_0$ perfectly covers $\ml{T'}$. Hence, the rectangle $w_1w_2w_2'w_1'$ is a cluster in $\mathfrak{T_0}$. This argument can also be applied to the case when $T'$ is negative since $\left|\ov{w_2}{w_3}\right|>\left|\lb{T}\right|$. Therefore, the problem statement holds when $\rho=1$.

    Choose a positive integer $m\geq 2$ to prove the inductive step, and suppose that the problem statement holds for $\rho=1, 2, \cdots, m-1$. We show that the statement also holds for $\rho=m$. Suppose that there is a tiling $\mathfrak{T}$ of $S^2\left(T,m,\alpha,\beta\right)$ with congruent copies of $T$. From SSTS, it follows that $\ov{w_{2m-1}}{w_{2m}}$ is perfectly covered by the subsidiary leg of some tile $T_1\in\mathfrak{T}$. Note that
    $$
    \left|\ov{w_{2m-2}}{w_{2m-1}}\right|+\left|\ov{w_{2m-1}}{w_{2m-1}'}\right|=(b-a)+(a+b)=2b>b.
    $$
    Thus, by the induction hypothesis, it suffices to show that $\ml{T_1}$ is perfectly covered by the main leg of another tile in $\mathfrak{T}$ whose bases are parallel to those of $T_1$.

    First, suppose that $T_1$ is negative. For each $i=2, 3, \cdots, m$, the line segment $\ov{w_{2m-2i+1}}{w_{2m-2i+2}}$ is perfectly covered by the subsidiary leg of some tile $T^i\in\mathfrak{T}$ since $T$ satisfies SSTS. If $T^i$ is negative for all $i=2, 3, \cdots, m$, then Lemma \ref{L1} implies that each $\ml{T^i}$ is perfectly covered by the main leg of some positive tile in $\mathfrak{T}$, as we desired. Instead, suppose that for some $j=2, 3, \cdots, m$, the tile $T^j$ is positive and $T^2,T^3,\cdots,T^{j-1}$ are negative. Then, we can observe that the closed line segment $\ov{w_{2m-2j+2}}{w_{2m-2j+3}}\cup\ub{T_{j-1}}$ is properly covered by edges of some tiles in $\mathfrak{T}$ by directly following the argument for the line segment $L$ in the proof of Lemma \ref{L3}. Thus, Lemma \ref{L1} implies that for each $i=1, 2, \cdots, j-1$, the edge $\ml{T^i}$ is perfectly covered by the main leg of some positive tile in $\mathfrak{T}$. Therefore, the main leg of some positive tile in $\mathfrak{T}$ perfectly covers $\ml{T_1}$ if $T_1$ is negative.

    Next, suppose that $T_1$ is positive. Then, the acute angle between the line segment $\ov{w_{2m}}{w_{2m+1}}$ and $\ml{T_1}$ has the same size as $\theta(T)$, and thus it is filled with a single $\theta\left(\te\right)$; let $T_1'\in\mathfrak{T}$ be the tile whose main acute angle is filling that acute angle. If $\ml{T_1'}$ is lying on $\ml{T_1}$, then it is done. Hence, we instead assume that $\lb{T_1'}$ is lying on $\ml{T_1}$ or vice versa and derive a contradiction. Note that $\ml{T_1}\neq\lb{T_1'}$ because of MTM. Put $E_1=\lb{T_1'}$. Since $T$ satisfies MTM, provided that $\ov{w_0}{w_1}$ is long enough and $\lb{T_1'}\varsubsetneq\ml{T_1}$, we can select tiles $T_2',T_3',\cdots,T_{n_1}'\in\mathfrak{T}$ ($n_1>1$) and corresponding edges $E_2,E_3,\cdots,E_{n_1}$ satisfying the following properties (see Figure \ref{fig:L2-1}, for instance).
    \begin{enumerate}
        \item $E_i$ is an edge of $T_i'$ for each $i=1,2,\cdots,n_1$;
        \item $E_1,E_2,\cdots,E_{n_1}$ properly covers $\ml{T_1}$ in the order $\left(E_1,E_2,\cdots,E_{n_1}\right);$
        \item $E_{n_1}\cap\ml{T_1}$ contains more than one point, and $E_{n_1}\not\subset\ml{T_1}$.
    \end{enumerate}
    If $\ml{T_1}\subset\lb{T_1'}$, then we put $n_1=1$. When $\ov{w_0}{w_1}$ is too short, it may be possible that we cannot select such tiles and their edges satisfying the above three conditions and at the same time not going over the boundary of $S^2\left(T,m,\alpha,\beta\right)$. If this is the case, then it follows from MTM that $\ml{T_1}$ cannot be covered perfectly or properly by edges of tiles in $\mathfrak{T}$ other than $T_1$. However, this is a contradiction since $\ml{T_1}$ is not lying on the boundary of $S^2\left(T,m,\alpha,\beta\right)$. For this reason, we may assume in further proof that $\ov{w_0}{w_1}$ is long enough to forestall such a problem.

    \begin{figure}[H]
    \centering
    \includegraphics[width=0.6\linewidth]{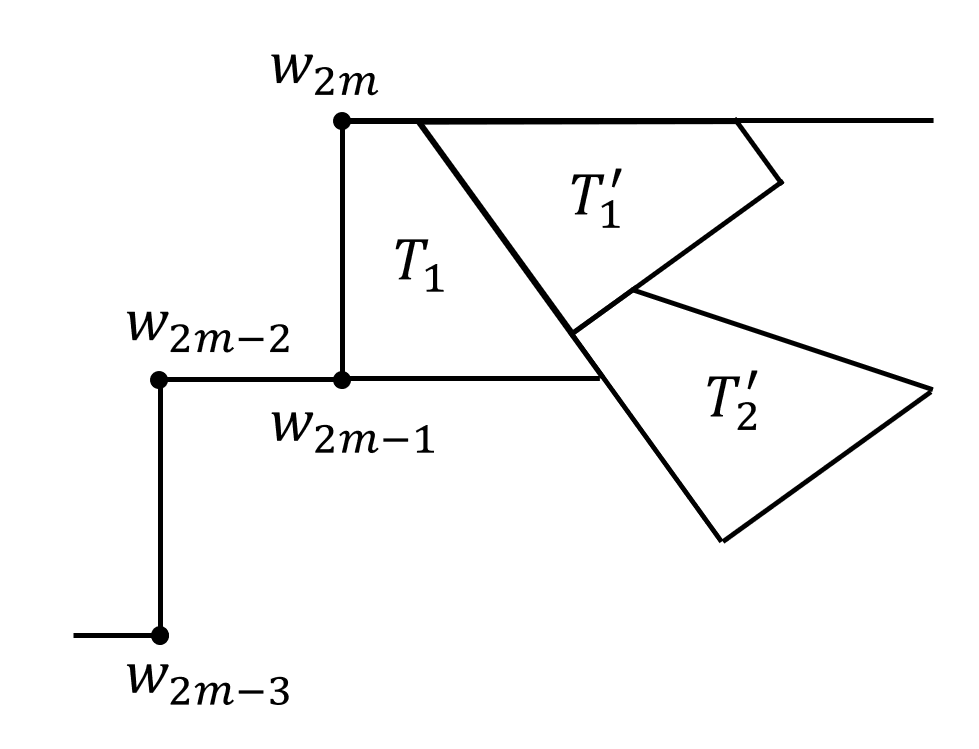}
    \caption{Tiles $T_1,T_1',T_2',\cdots,T_{n_1}'$ with $n_1=2$}
    \label{fig:L2-1}
\end{figure}

    Put $L_1=\lb{T_1}\cup\ov{w_{2m-2}}{w_{2m-1}}$. Since $L_1$ is a line segment of length $2b-a$, and since $2b-a>b$, it follows from the induction hypothesis for $\rho=m-1$ that the $(4m-4)$-gon 
    $$
    w_1w_2\cdots w_{2m-2}w_{2m-2}'w_{2m-3}'\cdots w_1'
    $$
    is a cluster in $\mathfrak{T}$. Here, we derive a contradiction in advance for all but the case $a=b/3$.

    \paragraph{Case $a>b/2$:} If $a>b/2$, then $\left|L_1\right|<a+b$. Thus, $\ov{w_{2m-2}}{w_{2m-2}'}$ properly covers $L_1$, which implies that $\ml{T_1}$ cannot be properly covered by edges of tiles in $\mathfrak{T}$. However, this is a contradiction since $E_1,E_2,\cdots,E_{n_1}$ properly cover $\ml{T_1}$.

    \paragraph{Case $a=b/2$:} If $a=b/2$, then $L_1$ is perfectly covered by $\ov{w_{2m-2}}{w_{2m-2}'}$. Moreover, since $T$ satisfies SSTS, and since
    $$
    \left|\ov{w_{2m-4}'}{w_{2m-3}'}\right|=b-a=a<b,
    $$
    there is a tile $T''\in\mathfrak{T}$ such that $\ssl{T''}=\ov{w_{2m-5}'}{w_{2m-4}'}$ and either $\ub{T''}$ or $\lb{T''}$ covers $\ov{w_{2m-4}'}{w_{2m-3}'}$ (see Figure \ref{fig:L2-2}). If $\ub{T''}=\ov{w_{2m-4}'}{w_{2m-3}'}$, then the line segment $\ov{w_{2m-3}'}{w_{2m-2}'}$ cannot be properly covered by edges of tiles in $\mathfrak{T}$ because of $\ml{T''}$ and $\cup_{i=1}^{n_1}E_i$. The same result holds when $\lb{T''}$ properly covers $\ov{w_{2m-4}'}{w_{2m-3}'}$. Thus, in either case, the edge $\lb{\te}$ lies on $\ov{w_{2m-3}'}{w_{2m-2}'}$ since the acute angle $\Theta$ between the two line segments $\ov{w_{2m-3}'}{w_{2m-2}'}$ and $\cup_{i=1}^{n_1}E_i$ is filled with one or more $\theta(\te)$'s, and since $\left|\ml{T}\right|>\left|\ssl{T}\right|$. This implies that $b\leq h$, that is, $\theta(T)>\pi/4$. However, this is a contradiction because the acute angle $\Theta$ is filled with at least one $\theta(T)$ while
    $$
    \Theta=\frac{\pi}{2}-\theta(T)<\frac{\pi}{4}<\theta(T).
    $$

\begin{figure}[H]
\captionsetup[subfigure]{labelformat=empty}
      \centering
	   \begin{subfigure}{0.4\linewidth}
		\includegraphics[width=\linewidth]{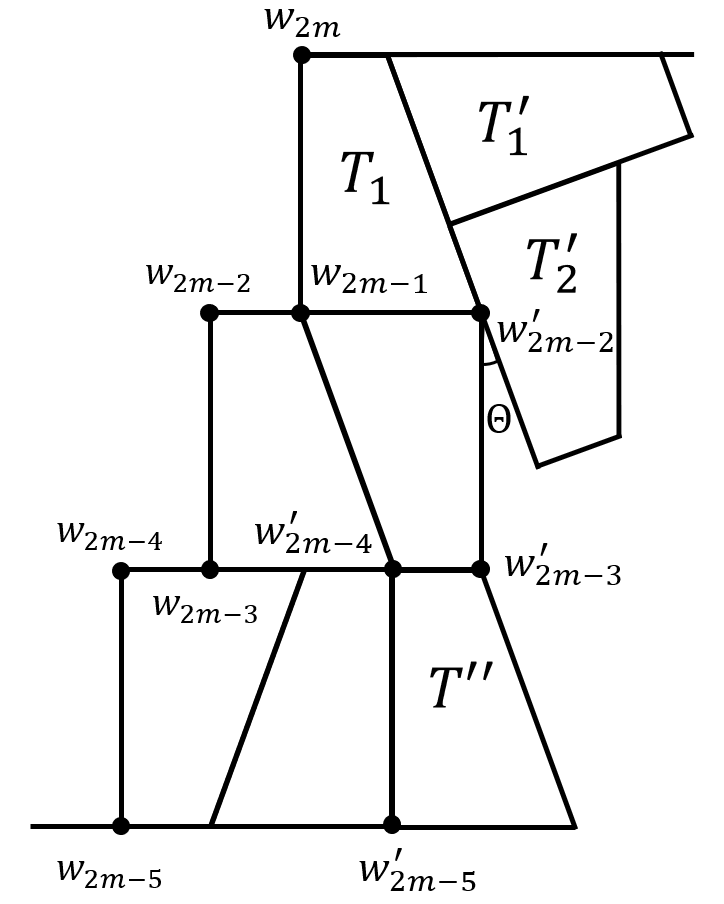}
		\caption{$\ub{T''}$ covers $\ov{w_{2m-4}'}{w_{2m-3}'}$}
		\label{fig:L2-2.1}
	   \end{subfigure}
	   \begin{subfigure}{0.4\linewidth}
		\includegraphics[width=\linewidth]{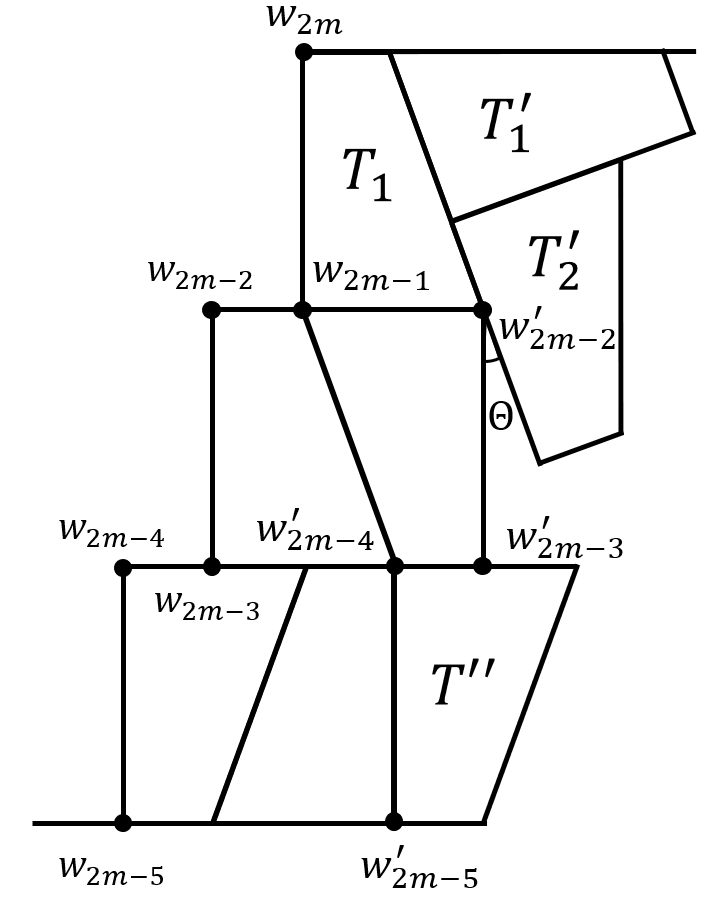}
		\caption{$\lb{T''}$ covers $\ov{w_{2m-4}'}{w_{2m-3}'}$}
		\label{fig:L2-2-1}
	    \end{subfigure}
	\caption{Covering $\ov{w_{2m-4}'}{w_{2m-3}'}$ with a base of $T''$ when $a=b/2$}
	\label{fig:L2-2}
\end{figure}

    \paragraph{Case $a<b/2$ and $a\neq b/3$:} Suppose that $a<b/2$ and $a\neq b/3$. Let $M_1$ be the closure of $L_1\backslash\ov{w_{2m-2}}{w_{2m-2}'}$, which is a closed line segment. The length of $M_1$ is 
    $$
    \left|M_1\right|=\left|L_1\right|-\left|\ov{w_{2m-2}}{w_{2m-2}'}\right|=(2b-a)-(a+b)=b-2a.
    $$
    Moreover, because of the two line segments $\ov{w_{2m-3}'}{w_{2m-2}'}$ and $\cup_{i=1}^{n_1}E_i$, edges of some tiles in $\mathfrak{T}$ perfectly cover $M_1$. In addition, from SSTS, it follows that there is a tile $T_2\in\mathfrak{T}$ such that $\ssl{T_2}=\ov{w_{2m-3}'}{w_{2m-2}'}$ and either $\ub{T_2}$ or $\lb{T_2}$ lies on $M_1$. Given that $b>b-2a$, what lies on $M_1$ is $\ub{T_2}$. This is possible only if $b-2a\geq a$, and thus we have a contradiction when $b/3<a<b/2$. On the other hand, if $a<b/3$, then a single $\theta\left(\te\right)$ fills the acute angle between $M_1$ and $\ml{T_2}$. While doing so, either $\ml{\te}$ or $\lb{\te}$ lies on the closure of $M_1\backslash\ub{T_2}$. This is also a contradiction, for the length of the line segment $M_1\backslash\ub{T_2}$ is $b-3a$, shorter than $b$ and $1$.
    
    From this point, the only remaining case is $a=b/3$. Suppose that  $a=b/3$. For each $i=1, 2,\cdots, 2m$ and $n=0, 1, 2, \cdots$, put
    $$
    w_{i}^{(n)}=w_i+(n(a+b),0),
    $$
    and for each $k=1, 2, \cdots, m$, put
    \begin{align*}
        A_k&=\ov{w_{2m-2k}^{(k-1)}}{w_{2m-2k+1}^{(k-1)}},\\
        B_k&=\ov{w_{2m-2k}^{(k-1)}}{w_{2m-2k}^{(k)}},\\
        S_k&=\ov{w_{2m-2k+1}^{(k-1)}}{w_{2m-2k+2}^{(k-1)}}.
    \end{align*}
    Choose an integer $j=2, 3, \cdots, m$. Assume that for each $i=1, 2, \cdots, j-1$
    the $(4m-4i)$-gon
    $$
    w_{1}^{(i-1)}w_2^{(i-1)}\cdots w_{2m-2i}^{(i-1)}w_{2m-2i}^{(i)}w_{2m-2i-1}^{(i)}\cdots w_{1}^{(i)}
    $$
    is a cluster in $\mathfrak{T}$ and that we already selected positive tiles $T_1,T_2,\cdots,T_j\in\mathfrak{T}$, tiles $T_1',T_2',\cdots,T_{n_{(j-1)}}'\in\mathfrak{T}$ with $1\leq n_1\leq n_2\leq\cdots\leq n_{(j-1)}$, and corresponding edges $E_1,E_2,\cdots,E_{n_{(j-1)}}$ satisfying the following properties (see Figure \ref{fig:L2-3}).
        \begin{enumerate}
        \item $\ssl{T_i}=S_i$ for each $i=1, 2, \cdots, j$;
        \item $\ub{T_1}\subset\ov{w_{2m}}{w_{2m+1}}$, and the closure of $\left(\lb{T_i}\cup A_i\right)\backslash B_i$ coincides with $\ub{T_{i+1}}$ for each $i=1, 2, \cdots, j-1$;
        \item the main legs of $T_1,T_2,\cdots,T_j$ perfectly cover the closed line segment $\cup_{i=1}^{j}\ml{T_i}$ in the order $\left(\ml{T_1},\ml{T_2},\cdots,\ml{T_j}\right)$;
        \item $E_i$ is an edge of $T_i'$ for each $i=1,2,\cdots,n_{(j-1)}$;
        \item $E_1,E_2,\cdots,E_{n_{(j-1)}}$ properly covers the closed line segment $\cup_{i=1}^{j-1}\ml{T_i}$ in the order $\left(E_1,E_2,\cdots,E_{n_{(j-1)}}\right);$
        \item $E_{n_{(j-1)}}\cap\ml{T_{j-1}}$ contains more than one point, and $E_{n_{(j-1)}}\not\subset\ml{T_{j-1}}$.
    \end{enumerate}
        \begin{figure}[H]
    \centering
    \includegraphics[width=0.55\linewidth]{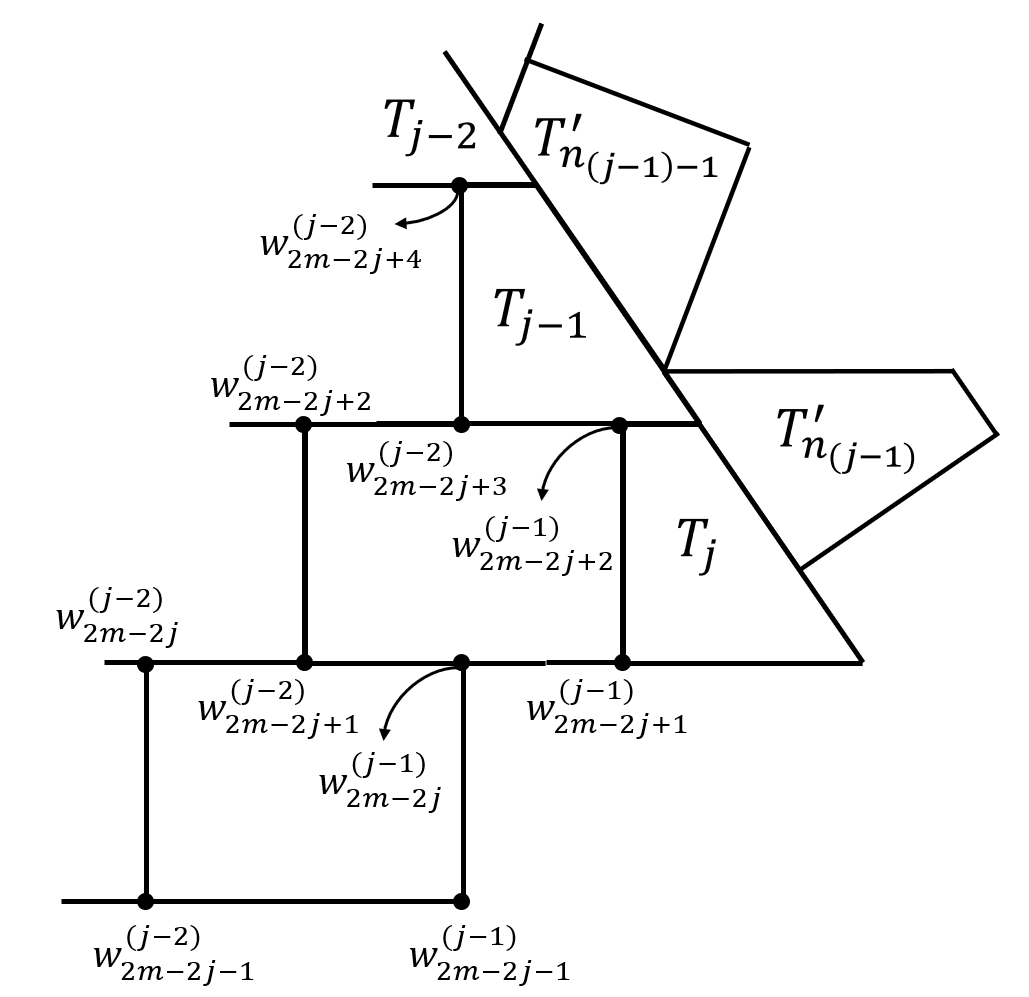}
    \caption{Tiles $T_1, T_2, \cdots, T_j, T_1', T_2', \cdots, T_{n_{(j-1)}}'$}
    \label{fig:L2-3}
\end{figure}
    Note that the above assumption is true when $j=2$. To check this, consider the line segment $M_1$ and the positive tile $T_2$ given in the preceding argument for the case when $a<b/2$ and $a\neq b/3$. Since $\left|M_1\right|=a$, the edge $\ub{T_2}$ perfectly covers $M_1$. This implies the second and third properties. The other properties immediately follow from the selection of $T_1',T_2'\cdots,T_{n_1}'$ and $E_1,E_2,\cdots,E_{n_1}$ that we previously done.
    
    For the rest of the proof, we show that unless $\ov{w_0}{w_1}$ is too short, we can select a positive tile $T_{j+1}\in\mathfrak{T}$, tiles $T_{n_{(j-1)}+1}',T_{n_{(j-1)}+2}',\cdots,T_{n_j}'\in\mathfrak{T}$ with $n_j\geq n_{(j-1)}$, and corresponding edges $E_{n_{(j-1)}+1},E_{n_{(j-1)}+2},\cdots,E_{n_j}$ satisfying the above assumption with $j$ replaced to $j+1$. If this is done, then by induction over $j$, we obtain positive tiles $T_1,T_2,\cdots,T_m\in\mathfrak{T}$ satisfying the second and third properties in the above assumption with $j$ replaced to $m$. Here, the closed line segment $\cup_{i=1}^m\ml{T_i}$ is perfectly covered by edges of some tiles in $\mathfrak{T}$ other than $T_1, T_2, \cdots, T_m$ because $\lb{T_m}\subset\ov{w_0}{w_1}$ and the interior of $\ov{w_{2m}}{w_{2m+1}}$ contains an endpoint of $\cup_{i=1}^m\ml{T_i}$. However, it follows from MTM that this is a contradiction since $E_1=\lb{T_1'}$ already lies on $\cup_{i=1}^m\ml{T_i}$. Hence, we can conclude that $\lb{T_1'}$ cannot lie on $\ml{T_1}$ in the first place and that $\ml{T_1}$ is perfectly covered by the main leg of some negative tile in $\mathfrak{T}$. This completes the inductive step over $\rho$. The case when $\ov{w_0}{w_1}$ is too short will be excluded at some point during the proof.

    Since $T$ satisfies MTM, provided that $\ov{w_0}{w_1}$ is long enough and $\cup_{i=1}^{n_{(j-1)}}E_{i}\varsubsetneq\cup_{i=1}^{j}\ml{T_i}$, we can select tiles $T_{n_{(j-1)}+1}',T_{n_{(j-1)}+2}'\cdots,T_{n_j}'\in\mathfrak{T}$ $(n_j>n_{(j-1)})$ and corresponding edges $E_{n_{(j-1)}+1},E_{n_{(j-1)}+2},\cdots,E_{n_j}$ satisfying the following properties.
        \begin{enumerate}
        \item $E_i$ is an edge of $T_i'$ for each $i=n_{(j-1)}+1,n_{(j-1)}+2,\cdots,n_j$;
        \item $E_1,E_2,\cdots,E_{n_j}$ properly covers the closed line segment $\cup_{i=1}^{j}\ml{T_i}$ in the order $\left(E_1,E_2,\cdots,E_{n_j}\right);$
        \item $E_{n_j}\cap\ml{T_j}$ contains more than one point, and $E_{n_j}\not\subset\ml{T_j}$.
    \end{enumerate}
    If $\cup_{i=1}^{j}\ml{T_i}\subset\cup_{i=1}^{n_{(j-1)}}E_i$, then we put $n_j=n_{(j-1)}$. We can immediately confirm that these selected tiles and edges satisfy the fourth to sixth properties of the above assumption with $j$ replaced to $j+1$. Similar to the situation selecting the tiles $T_2',T_3',\cdots,T_{n_1}'$ and their edges, every problem regarding the short length of $\ov{w_0}{w_1}$ and the boundary of $S^2\left(T,m,\alpha,\beta\right)$ leads to a contradiction. Therefore, we assume $\ov{w_0}{w_1}$ is long enough to avoid such problems.

    Put $L_j=\lb{T_j}\cup A_j$. This line segment is of length $2b-a$ greater than $b$, and thus we can apply the induction hypothesis for $\rho=m-j$ and observe that the $(4m-4j)$-gon
    $$
    w^{(j-1)}_1w^{(j-1)}_2\cdots w^{(j-1)}_{2m-2j}w^{(j)}_{2m-2j}w^{(j)}_{2m-2j-1}\cdots w^{(j)}_{1}
    $$
    is a cluster in $\mathfrak{T}$. Then, the closure $M_j$ of $L_j\backslash B_j$ is a closed segment of length $b-2a=a$. Moreover, by SSTS, $S_{j+1}$ is perfectly covered by the subsidiary leg of some tile in $\mathfrak{T}$ not contained in that clutser $(4m-4j)$-gon. Hence, similar to $M_1$, there is a tile $T_{j+1}\in\mathfrak{T}$ such that $\ssl{T_{j+1}}=S_{j+1}$ and $\ub{T_{j+1}}=M_j$ because of $S_{j+1}$ and $\cup_{i=1}^{n_j}E_i$. This shows that the first to third properties of the above assumption with $j$ replaced to $j+1$ are satisfied. This ends the proof.

\section{General boundary condition for reptile unit trapezoids}\label{GTP}
In this section, we prove Theorem \ref{GT}. Choose a positive integer $\mu\geq 3$. Suppose that $T$ is a unit trapezoid, not a parallelogram, with
$$
\left|\ssl{T}\right|=h,\quad\left|\ub{T}\right|=a,\quad\left|\lb{T}\right|=b.
$$
As explained in Section \ref{MRS}, it suffices to show that $\mu T$ cannot be tiled with congruent copies of $T$ if $a\notin\qh$ or $b\notin\qh$. Before going on, we prove the following lemma, which will be frequently used during further discussion.

\begin{lemma}\label{GTL}
    Let $\rho\geq 3$ be a positive integer, and $Q$ be a unit trapezoid. Suppose that there is a tiling $\mathfrak{Q}$ of $\mu Q$ with congruent copies of $Q$. Then, then there are non-negative integers $p, q, r,$ and $s$ satisfying $p<\mu$ and 
$$
p\left|\ub{Q}\right|+q\left|\lb{Q}\right|+r+s\left|\ssl{Q}\right|=\rho\left|\ub{Q}\right|.
$$
\end{lemma}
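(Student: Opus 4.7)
The plan is to extract the identity from how the upper base of $\mu Q$ is covered by edges of tiles in $\mathfrak{Q}$. Writing $a=\left|\ub{Q}\right|$, $b=\left|\lb{Q}\right|$, and $h=\left|\ssl{Q}\right|$, every edge of a tile has length in $\{a,b,1,h\}$, so the perfect covering of the upper base of $\mu Q$ (of length $\mu a$) by tile edges yields non-negative integers $p_0,q_0,r_0,s_0$ with
$$
p_0 a + q_0 b + r_0 + s_0 h = \mu a.
$$

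The key step is to establish $p_0<\mu$, i.e.\ the upper base of $\mu Q$ is not covered exclusively by $\mu$ tile upper bases. I plan a local angle count at each internal junction along that base: the tile interior angles meeting there from below must sum to $\pi$, and if the two incident covering edges are upper bases of tiles $T_i,T_{i+1}$, the contributing corners are top corners of those tiles, which carry angles drawn from $\{\pi-\theta(Q),\pi-\psi(Q)\}$. For non-right trapezoids, every pairwise sum of two such angles strictly exceeds $\pi$ (since $\theta,\psi<\pi/2$ and $\theta+\psi<\pi$, using that $Q$ is not a parallelogram), so the junction cannot be completed even by allowing further tile corners to converge there. For right trapezoids ($\psi=\pi/2$), only the pairing $\tfrac{\pi}{2}+\tfrac{\pi}{2}=\pi$ closes the junction, which forces both $T_i$ and $T_{i+1}$ to share their unique subsidiary leg at that junction; propagating this constraint to the next junction $T_{i+1}\text{-}T_{i+2}$ then requires $T_{i+1}$ to present a subsidiary leg at both its top corners, an impossibility that is unavoidable because $\mu\ge 3$.

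Once $p_0<\mu$ is in hand the remainder is purely arithmetic. Rearranging the covering identity gives $(\mu-p_0)a=q_0 b+r_0+s_0 h$ with $\mu-p_0\ge 1$, so Euclidean division of $\rho$ by $\mu-p_0$ writes $\rho=m(\mu-p_0)+r_1$ with integer $m\ge 0$ and $0\le r_1<\mu-p_0\le\mu$. Then
$$
\rho a = r_1 a + m(\mu-p_0)a = r_1 a + m q_0 b + m r_0 + m s_0 h,
$$
so $p:=r_1$, $q:=mq_0$, $r:=mr_0$, $s:=ms_0$ are non-negative integers with $p<\mu$ satisfying the required identity.

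The principal obstacle I anticipate is the right-trapezoid sub-case of the angle argument: the local combination $\tfrac{\pi}{2}+\tfrac{\pi}{2}$ is legal at a single junction, so ruling out longer chains of tile upper bases requires careful tracking of how the subsidiary-leg orientation of a middle tile is forced by its two flanking junctions and extracting the contradiction at the second of them.
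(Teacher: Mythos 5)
Your overall route matches the paper's: extract the covering identity from the tile edges lying on $\ub{\mu Q}$, rule out the degenerate covering by $\mu$ tile upper bases via a local angle argument at an interior junction, and then pass to general $\rho$ by division (a step the paper leaves implicit but which is needed for the statement as written, so your explicit handling of it is welcome). The arithmetic half is correct. The gap is in your angle count, and it occurs for obtuse trapezoids. By the paper's own convention, when $Q$ is obtuse the subsidiary acute angle $\psi(Q)$ is the internal angle between $\ssl{Q}$ and $\ub{Q}$; hence the two corners of an obtuse tile adjacent to its upper base measure $\pi-\theta(Q)$ and $\psi(Q)$, not $\pi-\theta(Q)$ and $\pi-\psi(Q)$. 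Your claim that every pairwise sum of two top-corner angles strictly exceeds $\pi$ therefore fails: two adjacent tiles can both present their acute $\psi(Q)$-corners at a junction, contributing only $2\psi(Q)<\pi$, and the residual angle $\pi-2\psi(Q)$ could a priori be filled by corners of further tiles having no edge on the base line, so no contradiction is obtained at that junction.

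The gap is repairable. One fix is to run the same propagation you use for right trapezoids: the pairings $(\pi-\theta)+(\pi-\theta)$ and $(\pi-\theta)+\psi$ still exceed $\pi$ (since $\psi(Q)>\theta(Q)$ for obtuse $Q$), so every interior junction would have to see two $\psi$-corners, yet a middle tile --- which exists because $\mu\ge 3$ --- has only one $\psi$-corner on its upper base. The paper instead uses a single argument covering all shapes at once: at the junction sitting at the main-leg end of the middle tile's upper base, the leftover angle is exactly $\theta(Q)$, the smallest internal angle of $Q$, so it can only be filled by a single $\theta$-corner of some tile; but the two edges adjacent to a $\theta$-corner are $\ml{\cdot}$ and $\lb{\cdot}$, which forces a non-upper-base edge onto the line and contradicts the all-upper-bases hypothesis with no case analysis on whether $Q$ is acute, right, or obtuse. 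You should either adopt that argument or add the obtuse sub-case explicitly.
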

\begin{proof}
    If there are no such integers, then none of $\left|\lb{Q}\right|, 1,$ and $\left|\ssl{Q}\right|$ equals $\left|\ub{Q}\right|$. Thus, no edges of tiles in $\mathfrak{Q}$ other than $\ub{Q^{\mathfrak{Q}}}$ can lie on $\ub{\rho Q}$. That is to say, $\ub{\rho Q}$ is perfectly covered by at least three $\ub{Q^{\mathfrak{Q}}}$'s. Thus, we can select three tiles $Q_1,Q_2,Q_3\in\mathfrak{Q}$ whose upper bases lie on $\ub{\rho Q}$ in a row in the order $\left(\ub{Q_1},\ub{Q_2},\ub{Q_3}\right)$. Here, the acute angle between $\ml{Q_2}$ and $\ub{\rho Q}$ has the same size as $\theta(Q)$, and either $\theta\left(Q_1\right)$ or $\theta\left(Q_3\right)$ fills that acute angle; without loss of generality, suppose that this is done by $\theta\left(Q_1\right)$. Then, either $\ml{Q_1}$ or $\lb{Q_1}$ lies on $\ub{\rho Q}$, and thus $\ub{Q_1}$ and $\ub{Q_2}$ cannot lie on $\ub{\rho Q}$ in a row, which is a contradiction. This ends the proof.
\end{proof}
\noindent
 
 Now, assume, for the sake of contradiction, that there is a tiling $\mathfrak{T}$ of $\mu T$ with congruent copies of $T$ but $a\notin\qh$ or $b\notin\qh$. We consider two cases separately: first, $a\notin\qh$ and $b\in\qh$, and second, $b\notin\qh$.

\subsection{Case 1: $a\notin\qh$, $b\in\qh$}\label{ARG}

Suppose that $a\notin\qh$ and $b\in\qh$. Let $p, q, r,$ and $s$ be non-negative integers satisfying
$$
pa+qb+r+sh=\mu a.
$$
Since
$$
(\mu-p)x=qb+r+sh\in\qh,
$$
and since $a\notin\qh$, we have $p=\mu$. However, this contradicts the result of Lemma \ref{GTL}. Therefore, $\mu T$ cannot be tiled with congruent copies of $T$ if $a\notin\qh$ and $b\in\qh$.

\subsection{Case 2: $b\notin\qh$}
Suppose that $b\notin\qh$. For each $x\in\qh$, let $p_x,q_x,r_x,$ and $s_x$ be some non-negative integers satisfying
$$
p_xa+q_xb+r_x+s_xh=x.
$$
Moreover, let $\hat{p},\hat{q},\hat{r},$ and $\hat{s}$ be some non-negative integers such that $\hat{p}<\mu$ and
$$
\hat{p}a+\hat{q}b+\hat{r}+\hat{s}h=\mu a;
$$
Lemma \ref{GTL} ensures such integers' existence.

We first show that $q_x=0$ for all $x\in\qh$. If $a\in\qh$, then $q_x=0$ since we have
    $$
    q_xb=x-\left(p_xa+r_x+s_xh\right)\in\qh.
    $$
For the case $a\notin\qh$, assume, for the sake of contradiction, that $q_x\geq 1$. If $p_x=0$, then it follows that
    $$
    b=\frac{1}{q_x}\left(x-r_x-s_xh\right)\in\qh,
    $$
which is a contradiction. Instead, if $p_x\geq 1$, then we have
    $$
    \frac{\hat{q}}{\mu-\hat{p}}b+\frac{1}{\mu-\hat{p}}(\hat{r}+\hat{s}h)=a=-\frac{q_x}{p_x}b+\frac{1}{p_x}\left(x-r_x-s_xh\right)
    $$
since $\hat{p}<\mu$. Here, $\hat{q}/(\mu-\hat{p})\geq 0$ and $q_x/p_x>0$, and thus it follows that
    $$
    b=\left(\frac{\hat{q}}{\mu-\hat{p}}+\frac{q_x}{p_x}\right)^{-1}\left(\frac{1}{p_x}\left(x-r_x-s_xh\right)-\frac{1}{\mu-\hat{p}}(\hat{r}+\hat{s}h)\right)\in\qh,
    $$
    which is a contradiction. Therefore, $q_x=0$ if $b\notin\qh$. Observe that $q_{\rho}=q_{\rho h}=0$ for all positive integer $\rho$. This implies that $T$ satisfies MTM and $\mathfrak{T}$ is a sub-to-sub tiling. Considering that $\mu T$ is congruent to the trapezoid $\hat{M}\left(T,\mu,\mu a\right)$, we then obtain a contradiction from Corollary \ref{ENDM} when $T$ is not a right trapezoid.

To show the same result for the case when $T$ is a right trapezoid, it suffices to show further that $T$ satisfies SSTS. Then, we can deduce a contradiction from the result of Corollary \ref{ENDSSS}. We use the result of Lemma \ref{LESTS}. If $a=h$, then $\theta(T)\neq\pi/4$, for otherwise we have
$$
b=\cos\theta(T)+a=\cos\theta(T)+h=\cos\theta(T)+\sin\theta(T)=\sqrt{2}\in\mathds{Q}_{\sqrt{2}/2}=\qh.
$$
Thus, $T$ satisfies the second condition in Lemma \ref{LESTS} if $a=h$. For the case $a\neq h$, assume, for the sake of contradiction, that there exist non-negative integers $p'$ and $q'$ such that $p'\leq q'+1$ and $p'a+q'b=h$. From $q_h=0$, it first follows that $q'=0$. We then have $p'\geq 2$ since $h>0$ and $a\neq h$. However, this contradicts the assumed inequality $p'\leq q'+1$. This shows that $T$ satisfies the first condition in Lemma \ref{LESTS} if $a\neq h$. Therefore, $T$ satisfies SSTS.

In conclusion, $\mu T$ cannot be tiled with congruent copies of $T$ if $b\notin\qh$. This ends the proof of Theorem \ref{GT}.

\section{Reptile unit trapezoids with the subsidiary leg of irrational length}\label{GT2P}
In this section, we prove Theorem \ref{GT2}. Let $T$ be a unit trapezoid, not a $\pi/3$-right trapezoid, with $\left|\ssl{T}\right|=h\notin\mathds{Q}$, $\left|\ub{T}\right|=a,$ and $\left|\lb{T}\right|=b>1$. Suppose that for some integer $\mu\geq 3$, there is a tiling $\mathfrak{T}$ of $\mu T$ with congruent copies of $T$. Since $\lb{T}$ is longer than $\ml{T}$ and $\ssl{T}$, the tiling $\mathfrak{T}$ is main-to-main and sub-to-sub. Moreover, $\mu T$ is congruent to the trapezoids $\hat{M}\left(T,\mu,\mu a\right)$ and $\hat{S}\left(T,\mu,\mu a\right)$, and thus we can apply the results of Corollary \ref{ENDM} and \ref{ENDS}. Consequently, there will be a contradiction if $T$ satisfies MTM or the three conditions in Corollary \ref{ENDS}.

We first show that one of the first two conditions in Theorem \ref{GT2} holds if $T$ is not obtuse and satisfies neither MTM nor STS. If $c_{h,a}>0$ and $c_{h,b}>0$, then $T$ satisfies MTM since $h$ is irrational. Similarly, $T$ satisfies STS if $c_{1,a}>0$ and $c_{1,b}>0$. Combining these facts, we can observe that $T$ belongs to one of the following cases.
\begin{enumerate}
    \item $c_{1,a}=0$, $c_{h,a}>0$, $c_{h,b}\leq 0$;
    \item $c_{1,a}>0$, $c_{h,a}=0$, $c_{1,b}\leq 0$;
    \item $c_{1,a}>0$, $c_{h,a}<0$, $c_{1,b}\leq 0$;
    \item $c_{1,a}<0$, $c_{h,a}>0$, $c_{h,b}\leq 0$.
\end{enumerate}
We can confirm that $T$ satisfies MTM for the second case and satisfies STS for the first case. Therefore, the only possibilities are the third and fourth cases. This completes the proof for the case when $T$ is not obtuse.

Next, suppose that $T$ is obtuse and does not satisfy MTM, STS, and the last condition in Theorem \ref{GT2}. If $c_{h,a}>0$ and $c_{h,b}>0$, then $T$ satisfies MTM. On the other hand, if $c_{1,a}>0$ and $c_{1,b}>0$, then there are no integers $n\leq\mu$ such that $n(a+b)=\mu a$ unless $c_{h,a}$ and $c_{h,b}$ are both positive. Since $T$ does not satisfy MTM by assumption, $T$ must satisfy both STS and the second condition in Corollary \ref{ENDS} if $c_{1,a}>0$ and $c_{1,b}>0$. Hence, as of the case when $T$ is not obtuse, $T$ belongs to one of the four cases above. Here, $T$ satisfies MTM for the second case. Furthermore, for the first case, $T$ satisfies STS, and there are no integers $n\leq\mu$ such that $n(a+b)=\mu a$; that is, the second condition in Corollary \ref{ENDS} holds. Therefore, we can rule out every possibility except for the third and fourth cases. This completes the proof for the case when $T$ is obtuse.

\section{Angular condition for reptile $I(\vartheta,a)$ with rational $\vartheta/\pi$}\label{MI}
In this section, we prove Corollary \ref{Cii}. Suppose that $I(\vartheta,a)$ is a reptile. What immediately follows from Theorem \ref{GT} is that both $a$ and $a+2\cos{\vartheta}$ are rational, and thus so is $\cos{\vartheta}$. According to the result of Tangsupphathawat in \cite[Theorem 3.3]{tangsupphathawat2014algebraic}, if $\Theta$ is an acute angle such that both $\Theta/\pi$ and $\cos{\Theta}$ are rational, then $\Theta=\pi/3$. This shows that $\vartheta=\pi/3$ if $\vartheta/\pi\in\mathds{Q}$. This ends the proof of the corollary.

\section{Right trapezoids}\label{RP}
In this section, we prove the main results for unit right trapezoids: Corollary \ref{Rii} (Section \ref{RP1}), Theorem \ref{RR} (Section \ref{RP2} to \ref{RP4}), and Theorem \ref{NEW} (Section \ref{RP5}).

\subsection{Angular condition for reptile $R(\vartheta,a)$}\label{RP1}

Suppose that $R(\vartheta,a)$ is a reptile. Then, by Theorem \ref{GT}, both $a$ and $a+\cos\vartheta$ are in $\qh$, and thus there are rational numbers $p$ and $q$ such that $\cos\vartheta=p+q\sin{\vartheta}$. Observe that
$$
(q^2+1)\sin^2\vartheta+2pq\sin\vartheta+p^2-1=\cos^2\vartheta+\sin^2\vartheta-1=0,
$$
which implies that $\sin\vartheta$ is an algebraic integer of degree at most two, and so is $\cos\vartheta$. This proves Corollary \ref{Rii}.

\subsection{Angular condition for reptile $R(\vartheta,a)$ with rational $\vartheta/\pi$}\label{RP2}

In \cite[Theorem 3.3]{tangsupphathawat2014algebraic}, it is shown that if $\Theta$ is an acute angle such that $\Theta/\pi\in\mathds{Q}$ and $\cos\Theta$ is an algebraic integer of degree at most $2$, then $\Theta$ equals one of $\frac{\pi}{6},\frac{\pi}{5},\frac{\pi}{4},\frac{\pi}{3},$ and $\frac{2\pi}{5}$. Thus, if $R(\vartheta,a)$ is a reptile and $\vartheta/\pi$ is rational, then $\vartheta$ equals one of these five values. At first, we can rule out $\frac{\pi}{5}$ and $\frac{2\pi}{5}$ since the sine of each of these two values is not an algebraic integer of degree at most $2$ as the following equations show (in fact, both are algebraic integers of degree $4$).
\begin{align*}
    \sin{\frac{\pi}{5}}&=\cos{\left(\frac{\pi}{2}-\frac{\pi}{5}\right)}=\cos{\frac{3\pi}{10}}\\
    \sin{\frac{2\pi}{5}}&=\cos{\left(\frac{\pi}{2}-\frac{2\pi}{5}\right)}=\cos{\frac{\pi}{10}}
\end{align*}
In addition, $\frac{\pi}{6}$ can also be excluded because $\sin{\frac{\pi}{6}}$ is rational whereas $\cos{\frac{\pi}{6}}$ is irrational. Hence, we conclude that $R(\vartheta,a)$ is a reptile and $\vartheta/\pi\in\mathds{Q}$ only if $\vartheta$ equals either $\frac{\pi}{3}$ or $\frac{\pi}{4}$.

\subsection{Uniqueness of reptile $R\left(\frac{\pi}{4},a\right)$}\label{RP3}
Let $T$ be a unit $\pi/4$-right trapezoid $R\left(\frac{\pi}{4},a\right)$. Suppose that $T$ is rep-$\mu^2$ for some positive integer $\mu\geq 3$. Then, $\mu T$ is tiled with congruent copies of $T$, and thus, by Theorem \ref{GT}, there are rational numbers $\alpha$ and $\beta$ such that $a=\alpha+\beta/\sqrt{2}$. We first show that $\alpha\beta\geq 0$. For this, we assume, for the sake of contradiction, that $\alpha\beta<0$. From Lemma \ref{GTL}, it follows that there are non-negative integers $p, q, r,$ and $s$ satisfying $p<\mu$ and
$$
p\left(\alpha+\frac{\beta}{\sqrt{2}}\right)+q\left(\alpha+\frac{\beta+1}{\sqrt{2}}\right)+r+\frac{1}{\sqrt{2}}s=\mu\left(\alpha+\frac{\beta}{\sqrt{2}}\right).
$$
We then have
$$
p+q+\frac{1}{\alpha}r=\mu=p+\left(1+\frac{1}{\beta}\right)q+\frac{1}{\beta}s
$$
since $\alpha$ and $\beta$ are both nonzero and rational. The above equation gives us the equation
$$
\frac{1}{\beta}q-\frac{1}{\alpha}r+\frac{1}{\beta}s=0,
$$
which implies that $q=r=s=0$ and $p=\mu$, for $\alpha$ and $\beta$ have opposite signs. However, this contradicts the assumption $p<\mu$. Hence, we have $\alpha\beta\geq 0$, as desired. Note that $\alpha\geq 0$ and $\beta\geq 0$ but not $\alpha=\beta=0$, for $a=\alpha+\beta/\sqrt{2}>0$.

Next, we prove that $T$ satisfies MTM and SSTS if $a\neq 1/\sqrt{2}$. If this is the case, then it follows from Corollary \ref{ENDSSS} that $a=1\sqrt{2}$ since $\mu T$ is congruent to the trapezoid $\hat{M}\left(T,\mu,\mu a\right)$. Suppose that $a\neq 1/\sqrt{2}$. To show that $T$ satisfies MTM, choose a positive integer $\rho$, and let $p', q', r',$ and $s'$ be non-negative integers such that
$$
p'\left(\alpha+\frac{\beta}{\sqrt{2}}\right)+q'\left(\alpha+\frac{\beta+1}{\sqrt{2}}\right)+r'+\frac{1}{\sqrt{2}}s'=\rho.
$$
We then have
$$
p'\beta+q'(\beta+1)+s'=0.
$$
since $\alpha,\beta\in\mathds{Q}$. Considering that $\beta\geq 0$, we obtain $q'=s'=0$, which shows that $T$ satisfies MTM. For SSTS, we show that the first condition of Lemma \ref{LESTS} holds. Assume, for the sake of contradiction, that there are non-negative integers $p''$ and $q''$ satisfying $p''\leq q''+1$ and
$$
p''\left(\alpha+\frac{\beta}{\sqrt{2}}\right)+q''\left(\alpha+\frac{\beta+1}{\sqrt{2}}\right)=\frac{1}{\sqrt{2}}.
$$
Then, we first have $q''=0$ because $\alpha+(\beta+1)/\sqrt{2}>1/\sqrt{2}$, and thus $p''=1/(\alpha\sqrt{2}+\beta)$ equals either $0$ or $1$. This implies that $\alpha=0$ and $\beta=1$. However, this contradicts the assumption $a\neq 1/\sqrt{2}$. Hence, $T$ must satisfy the first condition in Lemma \ref{LESTS} and therefore satisfies SSTS.

In conclusion, if $R\left(\frac{\pi}{4},a\right)$ is a reptile, then $a$ equals $1/\sqrt{2}$.

\subsection{Reduction to five candidates of reptile $R\left(\frac{\pi}{3},a\right)$}\label{RP4}

Let $T$ be a unit $\pi/3$-right trapezoid $R\left(\frac{\pi}{3},a\right)$. Suppose that $T$ is rep-$\mu^2$ for some positive integer $\mu\geq 3$. Then, there is a tiling $\mathfrak{T}$ of $\mu T$ with congruent copies of $T$. Thus, from Theorem \ref{GT}, it follows that $a=\alpha+\frac{\sqrt{3}}{2}\beta$ for some rational numbers $\alpha$ and $\beta$. The structure of the proof is as follows. First, we show that $\alpha\beta\geq 0$ as in Section \ref{RP3}. Next, we observe that $T$ satisfies MTM and SSTS unless $\beta$ vanishes, which implies that $\beta=0$ due to Corollary \ref{ENDSSS}. Finally, we show that $T$ satisfies STS and $\mathfrak{T}$ is a strictly main-to-main tiling if $\alpha\neq 1,\frac{1}{2},\frac{1}{4},\frac{1}{6},\frac{1}{8}$.

For the first step, assume that $\alpha\beta<0$. Let $p,q,r,$ and $s$ be non-negative integers such that $p<\mu$ and
$$
p\left(\alpha+\frac{\sqrt{3}}{2}\beta\right)+q\left(\alpha+\frac{1}{2}+\frac{\sqrt{3}}{2}\beta\right)+r+\frac{\sqrt{3}}{2}s=\mu\left(\alpha+\frac{\sqrt{3}}{2}\beta\right).
$$
The existence of such integers follows from Lemma \ref{GTL}. Since $\alpha$ and $\beta$ are both nonzero, this gives us the equation
$$
p+\left(1+\frac{1}{2\alpha}\right)q+\frac{1}{\alpha}r=\mu=p+q+\frac{1}{\beta}s
$$
which can be reduced to the equation
$$
\frac{1}{2\alpha}q+\frac{1}{\alpha}r-\frac{1}{\beta}s=0.
$$
Considering that $\alpha$ and $\beta$ have opposite signs, we obtain $q=r=s=0$ and $p=\mu$, which contradicts the assumption $p<\mu$. Hence, we have $\alpha\beta\geq 0$.

Next, we show that $T$ satisfies MTM and SSTS if $\beta\neq 0$. Suppose that $\beta\neq 0$. Since $a=\alpha+\frac{\sqrt{3}}{2}\beta>0$, we have $\alpha\geq 0$ and $\beta>0$. Choose a positive integer $\rho$, and let $p',q',r',$ and $s'$ be non-negative integers such that
$$
p'\left(\alpha+\frac{\sqrt{3}}{2}\beta\right)+q'\left(\alpha+\frac{1}{2}+\frac{\sqrt{3}}{2}\beta\right)+r'+\frac{\sqrt{3}}{2}s'=\rho.
$$
Since $\alpha$ and $\beta$ are both rational, we have
$$
p'\beta+q'\beta+s'=0.
$$
For $\beta$ is positive, the above equation implies that $p'=q'=s'=0$. Thus, $T$ satisfies MTM. To prove that $T$ satisfies SSTS, it suffices to show that the first condition in Lemma \ref{LESTS} holds if $a\neq\sqrt{3}/2$. Suppose that $a\neq\sqrt{3}/2$ and assume that there are non-negative integers $p''$ and $q''$ such that $p''\leq q''+1$ and
$$
p''\left(\alpha+\frac{\sqrt{3}}{2}\beta\right)+q''\left(\alpha+\frac{1}{2}+\frac{\sqrt{3}}{2}\beta\right)=\frac{\sqrt{3}}{2}.
$$
Then, from $\alpha,\beta\in\mathds{Q}$, it follows that
$$
p''\alpha+q''\left(\alpha+\frac{1}{2}\right)=0,
$$
and thus $q''=0$ since $\alpha\geq 0$. Furthermore, $p''$ equals either $0$ or $1$. Both possibilities are contradictory because $p''=0$ implies $0=\frac{\sqrt{3}}{2}$, and $p''=1$ implies $\alpha+\frac{\sqrt{3}}{2}\beta=\frac{\sqrt{3}}{2}$. Hence, $T$ must satisfy one of the two conditions in Lemma \ref{LESTS} and therefore satisfies SSTS.

To complete the final step, suppose that $\alpha\neq 1,\frac{1}{2},\frac{1}{4},\frac{1}{6},\frac{1}{8}$. From the preceding step, it follows that $\beta=0$; otherwise, $T$ cannot be rep-$\mu^2$ by Corollary \ref{ENDSSS}. Thus, we can first confirm that $T$ satisfies STS. For the remaining part of the final step, it suffices to show that for every closed line segment $L$ of length $1$ contained in $\mu T$, no edges of tiles in $\mathfrak{T}$ other than $\ml{\te}$ can perfectly cover $L$. Assume, for the sake of contradiction, that $L$ is perfectly covered by $u$ upper bases, $v$ lower bases, and $w$ subsidiary legs of tiles in $\mathfrak{T}$, where $u, v,$ and $w$ are non-negative integers. Then, these three integers satisfy the equation
    $$
    u\alpha+v\left(\alpha+\frac{1}{2}\right)+\frac{\sqrt{3}}{2}w=1.
    $$
    From $\alpha\in\mathds{Q}$, it immediately follows that $w=0$. Moreover, $v$ equals either $0$ or $1$ since $\alpha+1/2>1/2$. If $v=0$, then $u\geq 3$, for $\frac{1}{\alpha}\neq 1, 2$. On the other hand, if $v=1$, then we have $u\geq 4$ since
    $$
    u=u+v\left(1+\frac{1}{2\alpha}\right)-\left(1+\frac{1}{2\alpha}\right)=\frac{1}{\alpha}-\frac{1}{2\alpha}-1=\frac{1}{2\alpha}-1,
    $$
    and since $\frac{1}{2\alpha}\neq 1, 2, 3, 4$. However, no three $\ub{\te}$'s can lie on $L$ in a row, as shown in the proof of Lemma \ref{GTL}. Moreover, four $\ub{\te}$'s and one $\lb{\te}$ cannot lie on $L$ in a row in the order
    $$
    \left(\ub{\te}, \ub{\te},\lb{\te}, \ub{\te}, \ub{\te}\right).
    $$
    Therefore, we derive a contradiction in either case. In conclusion, $L$ can be perfectly covered only with a single $\ml{\te}$. This completes the final step.
    
    In conclusion, if $R\left(\frac{\pi}{3},a\right)$ is a reptile, then $a$ equals one of $1,\frac{1}{2},\frac{1}{4},\frac{1}{6},$ and $\frac{1}{8}$. This ends the proof of Theorem \ref{RR}.

\subsection{$R\left(\frac{\pi}{3},\frac{1}{8}\right)$ is a reptile}\label{RP5}

Put $T=R\left(\frac{\pi}{4},\frac{1}{8}\right)$. Since the length of each edge of $T$ is a number in $\mathds{Q}_{\sqrt{3}}$, it is obvious that $T$ is not rep-$n$ if $1<n<25$ and $\sqrt{n}$ is neither an integer nor an integer multiple of $\sqrt{3}$. Thus, it suffices to check whether $T$ is rep-$n$ when $n=3, 4, 9, 12, 16$. Assume that a tiling $\mathfrak{T}$ of $\sqrt{n}T$ with congruent copies of $T$ exists. If $n$ equals either $3$ or $12$, then $\ml{\sqrt{n}T}$ is perfectly covered by some $\ssl{\te}$s, which is impossible. On the other hand, if $n$ equals one of $4, 9, 16$, then $\left|\ub{\sqrt{n}T}\right|<\left|\lb{T}\right|$. Hence, due to Lemma \ref{L1.1.1}, there is a tile $T'\in\mathfrak{T}$ such that $\ub{T'}\subset\ub{\sqrt{n}T}$ and $\ssl{T'}\subset\ssl{\sqrt{n}T}$. This implies that either $\lb{\te}$ or $\ml{\te}$ lies on the closure of $\ub{\sqrt{n}T}\backslash\ub{T'}$, which is a contradiction since the length of the closure is less than or equal to $\frac{1}{2}$. Therefore, $\sqrt{n}T$ cannot be tiled with congruent copies of $T$ if $n=3, 4, 9, 12, 16$. This completes the proof of Theorem \ref{NEW}.

\bibliographystyle{IEEEtran}
\bibliography{ref}

\begin{thebibliography}{1}
\providecommand{\url}[1]{#1}
\csname url@samestyle\endcsname
\providecommand{\newblock}{\relax}
\providecommand{\bibinfo}[2]{#2}
\providecommand{\BIBentrySTDinterwordspacing}{\spaceskip=0pt\relax}
\providecommand{\BIBentryALTinterwordstretchfactor}{4}
\providecommand{\BIBentryALTinterwordspacing}{\spaceskip=\fontdimen2\font plus
\BIBentryALTinterwordstretchfactor\fontdimen3\font minus \fontdimen4\font\relax}
\providecommand{\BIBforeignlanguage}[2]{{%
\expandafter\ifx\csname l@#1\endcsname\relax
\typeout{** WARNING: IEEEtran.bst: No hyphenation pattern has been}%
\typeout{** loaded for the language `#1'. Using the pattern for}%
\typeout{** the default language instead.}%
\else
\language=\csname l@#1\endcsname
\fi
#2}}
\providecommand{\BIBdecl}{\relax}
\BIBdecl

\bibitem{langford19401464}
C.~D. Langford, ``1464. uses of a geometric puzzle,'' \emph{The Mathematical Gazette}, vol.~24, no. 260, pp. 209--211, 1940.

\bibitem{golomb1964replicating}
S.~W. Golomb, ``Replicating figures in the plane,'' \emph{The Mathematical Gazette}, vol.~48, no. 366, pp. 403--412, 1964.

\bibitem{osburg2004selbstahnliche}
I.~Osburg, ``Selbst{\"a}hnliche polyeder,'' Ph.D. dissertation, Friedrich-Schiller-Universität Jena, Fakultät für Mathematik und Informatik, 2004.

\bibitem{betke1976zerlegungen}
U.~Betke, ``Zerlegungen konvexer polygone,'' 1976, unpublished manuscript.

\bibitem{laczkovich2023quadrilateral}
M.~Laczkovich, ``Quadrilateral reptiles,'' \emph{Beitr{\"a}ge zur Algebra und Geometrie/Contributions to Algebra and Geometry}, vol.~64, no.~4, pp. 945--967, 2023.

\bibitem{sallows2014more}
L.~Sallows, ``More on self-tiling tile sets,'' \emph{Mathematics Magazine}, vol.~87, no.~2, pp. 100--112, 2014.

\bibitem{snover1991rep}
S.~L. Snover, C.~Waiveris, and J.~K. Williams, ``Rep-tiling for triangles,'' \emph{Discrete mathematics}, vol.~91, no.~2, pp. 193--200, 1991.

\bibitem{tangsupphathawat2014algebraic}
P.~Tangsupphathawat, ``Algebraic trigonometric values at rational multipliers of $\pi$,'' \emph{Acta et Commentationes Universitatis Tartuensis de Mathematica}, vol.~18, no.~1, pp. 9--18, 2014.

\end{thebibliography}

\end{document}